\def\QQ{{\mathbb Q}}
\def\ZZ{{\mathbb Z}}
\def\RR{{\mathbb R}}
\def\CC{{\mathbb C}}
\def\BB{{\mathbf B}}
\newtheorem{thm}{Theorem}%[section]
\newtheorem{cor}[thm]{Corollary}
\newtheorem{prop}[thm]{Proposition}
\newtheorem{lemma}[thm]{Lemma}
\newtheorem{remark}[thm]{Remark}
\newtheorem{definition}[thm]{Definition}
\begin{document}

\title[Non-homogeneous Fourier matrices]{Classification of non-homogeneous Fourier matrices associated with modular data up to rank $5$}

\author[Gurmail Singh]{Gurmail Singh}
\address{Department of Mathematics and Statistics, University of Regina, Regina, Canada, S4S 0A2}
\email{Gurmail.Singh@uregina.ca}

\date{}

\keywords{Integral modular data, Fusion rings, $C$-algebra, Reality-based algebra.}

\subjclass[2000]{Primary 05E30, Secondary , 05E99, 81R05}

\begin{abstract}
Modular data is an important topic of study in rational conformal field theory, \cite{TG}. A modular datum defines finite dimensional representations of the modular group $\mbox{SL}_2(\ZZ)$. The rows of a Fourier matrix in a modular datum  are scaled to obtain an Allen matrix.
%For a Fourier matrix in a modular datum there exists an Allen matrix obtained from the Fourier matrix after dividing each its row with the first entry of that row.
In this paper  we classify the non-homogenous Fourier matrices and non-homogenous Allen matrices up to rank $5$.
Also, we establish some results that are helpful in recognizing the $C$-algebras not arising from Allen matrices.% by just looking at the character table of the $C$-algebra, in particular, the first row of the character table.
\end{abstract}

\maketitle

\section{Introduction}

\medskip
A modular datum consists of a pair of Fourier matrix and a diagonal matrix that satisfies the certain properties. The set of columns of a Fourier matrix with componentwise multiplication defined on the columns generate a fusion algebra over $\CC$, see \cite{MC1} and \cite{TG}.
But in this paper we apply a two step scaling on Fourier matrices and obtain the $C$-algebras that have nice properties. At the first step, we normalize the rows of a Fourier matrix $S$ by dividing each row of $S$ by its first entry and obtain an \emph{Allen matrix} $s$. The second scaling is applied on the columns of the Allen matrix $s$ by multiplying  each column of $s$ by its first entry and obtain the matrix $P$. The matrix $P$ is the character table of a $C$-algebra generated by the columns of $P$ under componentwise multiplication, and we call such an algebra a $C$-algebra arising from an Allen matrix $s$. Note that every $C$-algebra has a degree map but every fusion algebra not necessarily has a degree map. Therefore, to classify the Fourier matrices  we use the properties of Allen matrices and $C$-algebras arising from them instead of just using the properties of Fourier matrices or fusion algebras generated by the  columns of Fourier matrices.

\smallskip

Let $A$ be a finite dimensional and commutative algebra over $\CC$ with  distinguished basis  $\mathbf{B} = \{b_0, b_1, \hdots, b_{r-1}\}$, and an $\RR$-linear and $\CC$-conjugate linear involution $*:A \rightarrow A$. Let $\delta: A \rightarrow \CC$ be an algebra homomorphism. Then the triple $(A,\BB,\delta)$ is called a \emph{$C$-algebra} if  satisfies the following properties:

\begin{enumerate}
\item $b_0 =b_0^*= 1_A \in \mathbf{B}$,

\item for all $b_i \in \mathbf{B}$, $(b_i)^* = b_{i^*} \in \mathbf{B}$,
%\item there is a transposition $*$ of $\{1,\dots,r-1\}$ such that $(b_i)^* = b_{i^*}$, for all $i \in \{1,\dots,r-1\}$,
\item the structure constants of $A$ with respect to the basis $\mathbf{B}$ are real numbers, i.e.~for all $b_i, b_j \in \mathbf{B}$, we have
$$b_ib_j = \sum_{b_k \in \mathbf{B}}^{}\lambda_{ijk}b_k, \mbox{ for some } \lambda_{ijk} \in \mathbb{R},$$
\item  for all $b_i, b_j \in \mathbf{B}, \lambda_{ij0} \neq 0 \iff j = i^*$,
\item for all $b_i \in \mathbf{B}, \lambda_{ii^*0} = \lambda_{i^*i0} > 0$.
\item for all $b_i\in \BB$, $\delta(b_i)=\delta(b_{i^*})>0$.
\end{enumerate}

Let $(A,\BB,\delta)$ be a $C$-algebra.  Then $A$ is called \emph{symmetric} if $b_{i^*}=b_i$ for all $i$. The algebra homomorphism $\delta$ is called a \emph{degree map}, and $\delta(b_i)$, for all $b_i\in \BB$, are  called the degrees. For each $i\neq 0$, $\delta(b_i)$ is called a nontrivial degree and $\delta(b_0)=1$ is called a trivial degree. If  $\delta(b_i)=\lambda_{ii^*0}$ for all $b_i\in \BB$ then we call the basis $\BB$ standard basis. Every $C$-algebra with positive degree map has a unique standard basis, which can be arranged by rescaling each basis element by $\delta(b_i)/\lambda_{ii^*0}$. The algebra $A$ is a $C^*$-algebra whose involution satisfies $\alpha^* = \sum_i \bar{\alpha}b_{i^*}$, for all $\alpha = \sum_i \alpha_i b_i \in A$.   In particular, $A$ is a $r$-dimensional semisimple algebra, see \cite{HSnew1}, \cite{G1}.

Group algebras of finite abelian groups, Bose-Mesner algebras of finite commutative association schemes, and commutative fusion algebras are all special types of $C$-algebras.  For the detailed discussion on $C$-algebras and its non-commutative generalizations, see \cite{AFM}, \cite{Hig87},  \cite{HSnew1}, \cite{G1}, and references therein.

\smallskip

Let $(A,\mathbf{B}, \delta)$ be $C$-algebra. Then, we define its {\it order} to be
$$ n := \delta(\mathbf{B}^+) = \sum_{i=0}^d \delta(b_i), $$
and its {\it standard feasible trace} to be $\rho: A \rightarrow \mathbb{C}$ with $\rho(\alpha) = n \alpha_0$ for all $\alpha = \sum_i \alpha_i b_i \in A$.  Note that $\rho$ really is a trace function on $A$ because $\rho(\alpha\beta)=\rho(\beta\alpha)$ for all $\alpha, \beta \in A$.    Positivity of the degree map makes $A$ into a Frobenius algebra with nondegenerate hermitian form
$$ \langle \alpha, \beta \rangle = \rho(\alpha \beta^*), \quad \mbox{ for all } \alpha,\beta \in A. $$
Being a nonsingular trace function on the finite-dimensional Frobenius algebra $A$, $\rho$ can be expressed as a  linear combination of the irreducible characters of $A$, see \cite{Hig87}.  Let $Irr(A)$ denote the set of irreducible characters of $A$. The coefficients $m_{\chi}$ in this linear combination $\rho = \sum_{\chi} m_{\chi} \chi$ are the {\it multiplicities} of a $C$-algebra, where sum runs over $\chi \in Irr(A)$.
The following discussion shows that the multiplicities are always positive real numbers, also see \cite{HIB2} and \cite{HIB1}.
The standard feasible trace is a character, known as \emph{standard character} precisely when all the multiplicities are positive rational integers, see \cite{JA}, \cite{HIB2} and  \cite{HS2}. The standard feasible trace is a \emph{pseudo-standard character} if all the multiplicities are not integers but rational numbers, see \cite{HSnew2}.

\smallskip

Let $(A,\mathbf{B},\delta)$ be a $C$-algebra with order $n$.  Higman's character formula
$$ e_{\chi} = \frac{m_{\chi}}{n} \sum_i \frac{\chi(b_i^*)}{\lambda_{ii^*0}}b_i $$
expresses the centrally primitive idempotents of $A$ in terms of the standardized basis $\mathbf{B}$, see  \cite{Hig87}.
Since $\chi(b_{i^*})=\overline{\chi(b_i)}$ for all $b_i \in \mathbf{B}$, we have
$$\chi(e_{\chi}) = \chi(b_0) = \frac{m_{\chi}}{n}\sum_i \frac{\overline{\chi(b_i)}\chi(b_i)}{\lambda_{ii^*0}}.$$
$\chi(b_0)$ is the degree of $\chi$ so it is always a positive integer.  When the degrees $\delta(b_i)$ are all positive and real, then $n$ is positive and real, so the multiplicity $m_{\chi}$ is positive and real, see \cite{HIB2} and \cite{HIB1}.   It follows then that $e_{\chi}^* = e_{\chi}$.
From the above expression for central idempotents $e_\chi$ we obtain the   orthogonality relation \cite{Hig87}
$$\sum\limits_{k=0}^d \frac{\chi_i(b_{k^*})}{\lambda_{kk^*0}}\chi_j(b_k)=\delta_{ij}\frac {\chi_i(b_0)\delta(\BB^+)}{m_{\chi_i}}.$$
%Let $V$ be a vector the space with basis $Irr(A)$ over the complex field. If we define inner product on $V$ as
%$$\langle \chi_i,\chi_j\rangle:=\dfrac{m_{\chi_i}}{\chi_i(b_0)\delta(\BB^+)} \sum\limits_{k=0}^d \frac{\chi_i(b_{k^*})}{\lambda_{kk^*0}}\chi_j(b_k), ~~ \mbox{ then }
%\langle \chi_i,\chi_j\rangle=\begin{cases}0,& \mbox{if } i \neq j\\1,& \mbox{if } i=j.\end{cases}$$
Though we remark that the above orthogonality relation is also true for the noncommutative generalizations of $C$-algebras, see \cite{AFM},  \cite{HSnew1}, \cite{Hig87} and \cite{G1}.

\medskip

In Section 2, we give the definition of  Allen matrices and construct the $C$-algebras from them.
%with  bases consisting of the columns of the Allen matrices, the matrices derived from Fourier matrices, such that  the componentwise multiplication on the columns generate integral structure constants.
In Section 3, we summarize the results that are useful to recognize the $C$-algebras not arising from Allen matrices. In Section 4, first we prove an interesting theorem that is useful for the remaining results. Then we classify the non-homogenous Fourier matrices and  non-homogenous Allen matrices. For the classification of the non-homogenous Fourier matrices and non-homogenous Allen matrices of rank $4$ and $5$  we assume that one of the nontrivial degrees is equal to $1$. It is not hard recover the Allen matrix and Fourier matrix from the character table, the matrix $P$, of a $C$-algebras arising from an Allen matrix. Therefore, in section 4 we give the character tables instead of Allen matrices and Fourier matrices. In Section 5, we classify the integral Fourier matrices of rank $4$ and $5$ without assuming any nontrivial degree equal to $1$. %Also, in the previous section we point out that there is no integral Fourier matrix of rank $3$ and there is just one integral Fourier matrix of rank $2$.

\section{Construction of $C$-algebras arising from Allen matrices}

In this section, we give definitions of the Allen matrices,  Fourier matrices and construct $C$-algebras from them. The Fourier matrices are commonly studied in investigations of modular data, see \cite{MC} and  \cite{MC1}.
%The special cases include Kac-Peterson matrices, Hadamard matrices, and the matrices  corresponding to the Grothendieck rings of finite groups, see \cite{MC}, \cite{MC1} and \cite{TG}.
Modular data and quasi-modular data are defined differently in the literature. To keep the generality, we assume the structure constants to be integers instead of just nonnegative integers, see \cite{BI}, \cite{MC}, \cite{MC1} and \cite{TG}.

\begin{definition}\label{ModularDef} Let $r\in \ZZ^+$ and  $I$ be an $r \times r$ identity matrix. A pair $(S,T)$ of $r\times r$ complex matrices is called modular datum  (quasi-modular datum, respectively) if
\begin{enumerate}
\item $S$ is a unitary and symmetric matrix, so $S\bar{S}^T = 1, S= S^T$,
\item  $T$ is diagonal matrix and of finite order,
\item $S_{i0}>0$ for $0\leq i\leq r-1$,
\item $(ST)^3=S^2$ $\big($$(ST)^3=I$, respectively$\big)$,
\item $N_{ijk}= \sum_l{S_{li}S_{lj}\bar S_{lk}}{S^{-1}_{l0}} \in \ZZ$, for all $0\leq i,j,k\leq r-1$.
\end{enumerate}
 \end{definition}

\begin{definition}
A matrix $S$ satisfying the axioms $(i), (iii)$ and $(v)$ of Definition \ref{ModularDef} is called a \emph{Fourier matrix}.
\end{definition}

\begin{definition} Let $S$ be a Fourier matrix. We call a matrix $s =[s_{ij}]$ an \emph{Allen matrix} if $s_{ij}= {S_{ij}}{S^{-1}_{i0}}$ for all $i,j$.
\end{definition}

Since the structure constants are algebraic integers, the entries of an Allen matrix are algebraic integers, see \cite[Proposition 2.17]{HIB2} and \cite{MC1}. Therefore, in fact, an Allen matrix with rational entries has rational integer entries. Since $s_{ij}= {S_{ij}}{S^{-1}_{i0}}$ for all $i,j$, $S \in \RR^{r\times r}$ if and only if $s \in \RR^{r\times r}$. Thus we have the following definition.

\begin{definition}
Let $S$ be a Fourier matrix and $s$ be the corresponding Allen matrix. If $S \in \RR^{r\times r}$ $(S \in \QQ^{r\times r},$ resp.$)$ then we call the corresponding Allen matrix $s$ a real (integral, resp.) Allen matrix.
\end{definition}

Note that for an Allen matrix $s$, $s_{i0} = 1$ for all $0\leq i \leq r-1$ and  $s\bar s^T$ is a diagonal matrix. If $s \bar s^T=$diag$(d_0,d_1,\hdots, d_{r-1})$, where $d_i=\sum_js_{ij}\bar s_{ij}$ then  $N_{ijk}= \sum_l{s_{li}s_{lj}\bar s_{lk}}{d^{-1}_l}$ for all $0\leq i,j,k\leq r-1$. Also,  $d_i|s_{ji}|^2 = d_j|s_{ij}|^2$,  for all $0\leq i,j\leq r-1$.
If an Allen matrix $s$ and an associated diagonal matrix $T$ are integral matrices then the modular datum $(s,T)$ is called an \emph{integral modular datum} and the Allen matrix $s$ is called an \emph{integral Fourier matrix}. The study of integral modular datum is of independent interest. The following definitions of an integral modular datum and an integral Fourier matrix are from Cuntz's paper \cite{MC1}.

\begin{definition}\label{IntModularDef} Let $r\in \ZZ^+$. A pair $(s,T)$ of $r\times r$ integral matrices is called an integral modular datum if
\begin{enumerate}
\item $s_{i0} = 1$ for $0\leq i\leq r-1$, det$(s)\neq 0$,
\item  $ss^T=$diag$(d_0,d_1, \hdots, d_{r-1})$, where $d_i=\sum_js^2_{ij}$,
\item $\sqrt{d_j}s_{ij}=\sqrt{d_i}s_{ji}$ for $0\leq i,j\leq r-1$, so $s$ is symmetrizable,
\item $N_{ijk}= \sum_l {s_{li}s_{lj}s_{lk}}{d^{-1}_l} \in \ZZ$, for all $0\leq i,j,k\leq r-1$,
\item $T$ is a diagonal matrix of finite order,
\item $S^2 = (ST)^3$, where $S_{ij}=s_{ij}/\sqrt{d_i}$.
\end{enumerate}
 \end{definition}

\begin{definition}
A matrix $s$ satisfying the axioms $(i), (iii)$ and $(v)$ of Definition \ref{IntModularDef}  is called an \emph{integral Fourier matrix}.
\end{definition}

Therefore, in fact,  an integral Fourier matrix is an integral Allen matrix. Hence, every result that is true for Allen matrices is also true for integral Fourier matrices.
Furthermore, the real Allen matrices of a specific class become integral Fourier matrices, see \cite[Theorem 25]{G1}. But an Allen matrix need not be an integral Fourier matrix, for example, the character table of cyclic group of order $4$ is an Allen matrix but not an integral Fourier matrix.

\begin{definition}
Let $s$  be an Allen matrix (integral Fourier matrix) and $s \bar s^T=$diag$(d_0,d_1,\hdots, d_{r-1})$, where $d_i=\sum_js_{ij}\bar s_{ij}$ for all $0\leq i,j\leq r-1$. The magnitudes of the row vectors of $s$, $d_0,d_1,\hdots,d_{r-1}$ are called norms. We call $d_0$ the principal norm and $d_1,d_2,\hdots,d_{r-1}$  non-principal norms. The principal norm $d_0$ is also known as the \emph{size of a modular datum}.
\end{definition}

%\smallskip

There is an interesting row-and-column operation procedure that can be applied to a Fourier matrix $S$ that results in a first eigenmatrix $P$, character table, of a self-dual $C$-algebra.
At the first step, we scale the rows of a Fourier matrix $S$  by dividing the each row by its first entry to get an Allen matrix $s$ with entries $s_{ij}=S_{ij}S^{-1}_{i0}$ for all $0\leq i,j\leq r-1$. At the second step, we further scale the columns of the Allen matrix $s$ by multiplying each column of $s$ by its first entry to  get the matrix $P$ with entries $p_{ij}=s_{ij}s_{0j}$ for all $0\leq i,j\leq r-1$. The matrix $P$ obtained in such a way is a character table, also know as first eigenmatrix, of the self-dual $C$-algebra with standard basis consists of the columns of $P$. The self-duality property of the $C$-algebra comes from the symmetry of the Fourier matrix $S$, see Theorem \ref{CAlgebraThm}.
We can reverse the process to get the Fourier matrix $S$ from the first eigenmatrix $P$. The Allen matrix $s$ is retrieved from the matrix $P$, the first eigenmatrix, by dividing each column of $P$ by the square root of its first entry: $s_{ij} = \dfrac{p_{ij}}{\sqrt{p_{0j}}}$. Further, to retrieve the Fourier matrix $S$ we divide the each row of the Allen matrix $s$ by the square root  its norm: $S_{ij}=\dfrac{s_{ij}}{\sqrt{d_i}}$, where $d_{i}=\sum_js_{ij}\bar s_{ij}$. (For the detailed discussion on the correspondence of matrices $S$, $s$ and $P$ see \cite{G1}.) Thus it is not hard to obtain either one of three matrices $S,s$ and $P$ from the remaining two matrices.
Therefore, it is enough to classify either one of these three matrices. In Section 4, we shall give the first eigenmatrices of rank $2$ to $5$ because we can recover both Allen matrices and Fourier matrices from them.

%Thus it not hard to obtain an Allen matrix from a Fourier matrix and a Fourier matrix from an Allen matrix. Also, both Allen matrix $s$ and Fourier matrix  $S$ can be recovered  from the first eigenmatrix $P$ and $P$ can be  obtained from the matrices $s$  and $S$. Therefore, it is enough to classify either one of these three matrices. In Section 4, we shall give the first eigenmatrices of rank $2$ to $5$.
%Moreover, corresponding to each Allen matrix $s$ we get the first eigenmatrix $P$ that becomes the character table of a $C$-algebra that has degree map. Thus we get the strong results, especially the Proposition \ref{SymmetrizingPropertyProp} and Theorem \ref{SimulataneousPermThm}.

\begin{remark}\rm Throughout of this paper we use the following notations. Let $S(=[S_{ij}])$ be a Fourier matrix. Then  $s(=[s_{ij}])$ and $P(=[P_{ij}])$ denote the Allen matrix and the first eigenmatrix, respectively. We label the rows and columns with $0,1,2, \dots$.
%The each matrix is indexed by $\{0,\dots,(r-1)\}$.
The set of the columns of matrices $P$ and $s$ are denoted by $\BB=\{b_0,b_1,\hdots, b_{r-1}\}$ and $\tilde \BB=\{\tilde b_0, \tilde b_1,\hdots, \tilde b_{r-1}\}$, respectively. The structure constants generated by the columns, with the componentwise multiplication, of $P$ and $s$ are denoted by $\lambda_{ijk}$ and $N_{ijk}$, respectively. For any matrix $M$  the transpose of $M$ is denoted by $M^T$. A primitive $n$th root of unity is denoted by $\zeta_n$.
\end{remark}

For the remaining part of this section we collect some results and definitions about $C$-algebras arising from Allen matrices that are used frequently in this paper. Although the proofs of these results can be found in the author's another paper  \cite{G1}  but to show that how the construction of $C$-algebra from Allen matrices works we include the proof of two result here.

\smallskip

\begin{lemma}\cite{G1} \label{CAlgebraThm}Let $S$ be a Fourier matrix. Let $\BB$ and $\tilde\BB$ be the sets of the columns of matrices $P$ and $s$, respectively. Let $\CC\BB$ and $\CC\tilde\BB$ be the linear spans of $\BB$ and $\tilde\BB$ over the field of  complex numbers. Prove that the vector spaces $\CC\BB$ and  $\CC\tilde\BB$ satisfy the first five conditions of the definition of  $C$-algebra.
\end{lemma}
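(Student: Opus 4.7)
The plan is to verify conditions (i)--(v) for $\CC\tilde\BB$, with entrywise complex conjugation as the involution, and then transfer each condition to $\CC\BB$ via the positive real rescaling $b_j = s_{0j}\tilde b_j$ implicit in $p_{lj} = s_{lj}s_{0j}$. Condition (i) is immediate: by definition $s_{l0} = S_{l0}/S_{l0} = 1$ and $p_{l0}=s_{l0}s_{00}=1$, so both $\tilde b_0$ and $b_0$ are the all-ones vector, which is real and is the identity under componentwise multiplication.

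The main preparatory step, which I expect to be the crux, is to show that $S^2$ is a (signless) permutation matrix whose associated permutation $*$ is an involution. $S^2$ is symmetric because $S$ is, and unitary because $S\bar S = I$ (the combination of axioms (i) giving $S\bar S^T = I$ and $S = S^T$) forces $\overline{S^2} = (S^{-1})^2 = (S^2)^{-1}$. Expanding axiom (v) at $k = 0$ with $S_{l0}>0$ real gives $N_{ij0} = \sum_l S_{li}S_{lj} = (S^2)_{ij}\in\ZZ$. A unitary matrix with integer entries is a signed permutation, so I can write its unique nonzero entry in row $j$ as $(S^2)_{j,j^*}=\epsilon_j\in\{\pm 1\}$. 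Since $S$ commutes with $S^2$, one has $\bar S = S^{-1} = S\cdot(S^2)^{-1}$, which at $l=0$ reads $\bar S_{0j} = \epsilon_j S_{0,j^*}$; then symmetry and axiom (iii) force $\bar S_{0j} = S_{j0}>0$ and $S_{0,j^*} = S_{j^*,0}>0$, so $\epsilon_j=+1$. Finally, $S^2$ real together with $\overline{S^2}=(S^2)^{-1}$ gives $(S^2)^2=I$, so $*$ is an involution.

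With $S^2 = P_*$ in hand, the remaining axioms for $\tilde\BB$ follow quickly. For (iii), a direct computation using $(s^{-1})_{kl} = \bar S_{lk}S_{l0}$ (coming from $s = \mathrm{diag}(S_{l0})^{-1}S$ and $\bar S = S^{-1}$) shows the componentwise product of columns of $s$ satisfies $\tilde b_i\tilde b_j = \sum_k N_{ijk}\tilde b_k$ with $N_{ijk}\in\ZZ$ by axiom (v), so the product is closed with real structure constants. For (ii): $\bar s_{lj} = \bar S_{lj}/S_{l0} = S_{l,j^*}/S_{l0} = s_{l,j^*}$, so $\bar{\tilde b_j} = \tilde b_{j^*}\in\tilde\BB$. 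For (iv) and (v): $N_{ij0} = (S^2)_{ij} = \delta_{j,i^*}$, giving the required support and the positivity $N_{ii^*0}=1$.

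To transfer to $\BB$: symmetry and axiom (iii) make $S_{0j}$ real and positive, and realness combined with $\bar S_{0j}=S_{0,j^*}$ gives $s_{0j}=s_{0,j^*}>0$. Hence $b_j = s_{0j}\tilde b_j$ is a positive real rescaling, and the structure constants $\lambda_{ijk} = s_{0i}s_{0j}N_{ijk}/s_{0k}$ are real (axiom (iii)); the involution is preserved, $\bar b_j = s_{0j}\tilde b_{j^*} = s_{0,j^*}\tilde b_{j^*} = b_{j^*}$ (axiom (ii)); and $\lambda_{ij0} = s_{0i}s_{0j}N_{ij0}$ (using $s_{00}=1$) inherits the support from $N_{ij0}$ as well as the positivity $\lambda_{ii^*0} = s_{0i}s_{0,i^*}>0$ (axioms (iv), (v)). The expected obstacle is the sign argument in the second paragraph: it is tempting to hope that symmetry and unitarity of $S$ alone force $S^2$ to be an unsigned permutation, but the positivity axiom (iii) enters essentially there.
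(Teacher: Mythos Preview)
Your proof is correct and follows the same overall strategy as the paper: identify the involution $*$ via complex conjugation of columns, compute $N_{ij0}$ using unitarity and symmetry of $S$ to verify axioms (iv)--(v), and transfer everything to $\BB$ through the positive rescaling $b_i=s_{0i}\tilde b_i$ with $\lambda_{ijk}=s_{0i}s_{0j}s_{0k}^{-1}N_{ijk}$. The one substantive difference is that the paper simply \emph{declares} the involution by writing $\bar S_{ij}=S_{ij^*}$ and then asserts $N_{ij0}\neq 0\iff j=i^*$ with $N_{ii^*0}=1$, whereas you actually prove this: your observation that $N_{ij0}=(S^2)_{ij}\in\ZZ$ forces $S^2$ to be a unitary integer matrix, hence a signed permutation, and your use of $S_{j0}>0$ to kill the signs is exactly the justification the paper omits (or defers to the cited reference). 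So your argument is a strictly more complete version of the paper's sketch rather than a different route.
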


\begin{proof}
%Let $S$ be a Fourier matrix. Let $\BB$ and $\tilde\BB$ be the sets of the columns of matrices $P$ and $s$, respectively. Let $\CC\BB$ and $\CC\tilde\BB$ be the linear span of $\BB$ and $\tilde\BB$ over the field of  complex numbers.
The $\CC$-conjugate linear involution $*$ on columns of $S$ is given by the involution on elements of $S$,  defined as $(S_{ij})^*=S_{ij^*}=\bar S_{ij}$ for all $i,j$. Thus, if $S_j$ denotes the $j$th column of a Fourier matrix $S$ then the involution on $S_j$ is defined as $(S_j)^*=S_{j^*}=[S_{0j^*}, S_{1j^*}, \hdots, S_{(r-1),j^*}]^T$. Since $b_i=s_{0i}\tilde b_{i}$, the structure constants generated by the basis $\BB$ are given by $\lambda_{ijk}={N_{ijk}s_{0i}s_{0j}}{s^{-1}_{0k}}$, for all $i,j, k$.  As $S$ is a unitary and symmetric matrix, therefore, $N_{ij0}= \sum_l{S_{li}S_{lj}\bar S_{l0}}{S^{-1}_{l0}}\neq 0 \iff j=i^*$ and $N_{ii^*0}=1>0$ for all $i,j$. Hence, $\lambda_{ij0}\neq 0 \iff j=i^*$  and $\lambda_{ii^*0}>0$ for all $i,j$. Also, the first column each of the matrices $P$ and $s$ is an identity element of $\CC\BB$ and $\CC\tilde\BB$, respectively. Therefore, the vector spaces $\CC\BB$ and $\CC\tilde\BB$ satisfy the first five axioms of the definition of a $C$-algebra.
\end{proof}

%\smallskip

In the next theorem we show that corresponding to every Allen matrix there exists a $C$-algebra.

\begin{thm}\cite{G1} \label{CAlgebraThm}Let $S$ be a Fourier matrix. Let $\BB$ the set of the columns of matrices $P$. Prove that the vector space $\CC\BB$ is a $C$-algebra and $\BB$ is the standard basis.
\end{thm}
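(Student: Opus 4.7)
The previous lemma establishes axioms (i)--(v), so the plan reduces to constructing a degree map $\delta$ satisfying axiom (vi) and then verifying $\delta(b_i)=\lambda_{ii^*0}$ for every $i$. The natural candidate is evaluation at the $0$-th row: define $\delta:\CC\BB\to\CC$ by $\delta(b_j):=P_{0j}$. Since multiplication in $\CC\BB$ is componentwise on column vectors, evaluation at any fixed coordinate is automatically an algebra homomorphism, so this is the only sensible choice and the homomorphism property is essentially free.

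First I would show positivity. Using $P_{ij}=s_{ij}s_{0j}$ and $s_{ij}=S_{ij}S_{i0}^{-1}$, I get $P_{0j}=s_{0j}^{2}=\bigl(S_{0j}/S_{00}\bigr)^{2}$. Axiom (iii) of the Fourier matrix gives $S_{0j}>0$ and $S_{00}>0$, so $\delta(b_j)>0$. Next, I would check $\delta(b_j)=\delta(b_{j^*})$. From the previous lemma the involution on columns of $s$ is entrywise conjugation, so $s_{0j^*}=\overline{s_{0j}}$; but $s_{0j}\in\RR_{>0}$ by the positivity computation above, hence $s_{0j^*}=s_{0j}$ and $\delta(b_{j^*})=s_{0j^*}^{2}=s_{0j}^{2}=\delta(b_j)$. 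This finishes axiom (vi).

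For the standard-basis claim I need $\lambda_{ii^*0}=\delta(b_i)=s_{0i}^{2}$. The earlier lemma gave $\lambda_{ijk}=N_{ijk}\,s_{0i}s_{0j}s_{0k}^{-1}$, so
\[
\lambda_{ii^*0}=N_{ii^*0}\cdot s_{0i}s_{0i^*}s_{00}^{-1}=N_{ii^*0}\cdot s_{0i}^{2},
\]
using $s_{00}=1$ and $s_{0i^*}=s_{0i}$ from the previous paragraph. It remains to show $N_{ii^*0}=1$. This comes from unitarity plus symmetry of $S$: in $N_{ij0}=\sum_{l}S_{li}S_{lj}\bar S_{l0}S_{l0}^{-1}=\sum_{l}S_{li}S_{lj}$, replacing $S_{lj}=\overline{S_{lj^*}}=\overline{S_{j^*l}}$ converts the sum into $(S\bar S^{T})_{i,j^*}=\delta_{ij^*}$.

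The argument is conceptually light — the only place where care is required is ensuring that the chain of identifications $P_{0j}=s_{0j}^{2}$, $s_{0j^*}=\overline{s_{0j}}$, and the reality of $s_{0j}$ all fit together to give both $\delta(b_{j^*})=\delta(b_j)$ and $\lambda_{ii^*0}=s_{0i}^{2}$ consistently. The main potential obstacle is bookkeeping around the involution: one must be sure that the involution introduced in the previous lemma (inherited from $S_{ij^*}=\bar S_{ij}$) is the correct one to apply to columns of $P$ after the column rescaling $b_i=s_{0i}\tilde b_i$. Since the rescaling factors $s_{0i}$ are real, they commute with conjugation and the involution passes cleanly from $\tilde\BB$ to $\BB$, which is why the computation goes through without modification.
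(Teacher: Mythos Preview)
Your proof is correct and in fact more streamlined than the paper's. The two agree on the value of the degree map ($\delta(b_j)=P_{0j}=s_{0j}^{2}$) and on the standard-basis computation $\lambda_{ii^*0}=N_{ii^*0}\,s_{0i}s_{0i^*}=s_{0i}^{2}$, but they differ in how the homomorphism property of $\delta$ is established. You observe that since the algebra multiplication on $\CC\BB$ is componentwise, evaluation at row $0$ is automatically multiplicative, so there is nothing to check. The paper instead defines $\delta$ on the $\tilde\BB$-basis (via a $\CC$-conjugate linear extension) and then verifies multiplicativity by a direct computation: expanding $\delta(b_ib_j)$ through the structure constants $N_{ijk}$ and invoking row orthogonality of $s$ to collapse the double sum. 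Your route is shorter and avoids that calculation entirely; the paper's route has the minor advantage of making the role of the orthogonality relation explicit, but at the cost of an unnecessary detour through the rescaled basis. Your added check that $\delta(b_{j^*})=\delta(b_j)$ via the reality of $s_{0j}$ is a point the paper leaves implicit, and your re-derivation of $N_{ii^*0}=1$ from unitarity and symmetry of $S$ duplicates something already noted in the preceding lemma, but neither is a defect.
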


\begin{proof}We just need to check the last axiom of the definition of a $C$-algebra. Let $\tilde \BB$ and $\BB$ denote the columns of $s$ and $P$ respectively. Define a $\CC$-conjugate linear map $\delta:A\longrightarrow \CC$  as  $\delta(\sum_ia_i\tilde{b_i})=\sum_i\bar a_is_{0i}$. Thus $\delta(b_i)= \delta(s_{0i}\tilde b_{i}) = s^2_{0i}$, hence $\delta$ is positive valued. Since $b_{i^*}= \bar s_{0i}\tilde{b}_{i^*}=s_{0i}\tilde{b}_{i^*}$, $(b_ib_{i^*})_0=s_{0i}s_{0i}(\tilde b_i \tilde b_{i^*})_0= s^2_{0i}=\delta(b_i)$. The map $\delta$ is an algebra homomorphism, to see:
$$\begin{array}{rcl}\delta(b_i b_j)&=&
%s_{0i}s_{0j}\delta(\tilde{b_i}\tilde{b_j})
%\\&=&s_{0i}s_{0j}\delta(\sum\limits^{}_{k}N_{ijk}\tilde{b_k})
s_{0i}s_{0j}\sum\limits^{}_{k}N_{ijk}\delta(\tilde{b_k})
%\\&=&
%s_{0i}s_{0j}\sum\limits^{}_{k}N_{ijk}s_{0k}
%\\&=&s_{0i}s_{0j}\sum\limits^{}_{k}\sum\limits^{}_{l}\dfrac{s_{li}s_{lj}\bar s_{lk}}{d_l}s_{0k}
%\\&=&s_{0i}s_{0j}\sum\limits^{}_{l}\sum\limits^{}_{k}\dfrac{s_{li}s_{lj}\bar s_{lk}}{d_l}s_{0k}
\\&=&s_{0i}s_{0j}[\dfrac{s_{0i}s_{0j}}{d_0} \sum\limits^{r-1}_{k=0}\bar s_{0k}s_{0k}]+s_{0i}s_{0j}[\sum\limits^{r-1}_{l=1}\dfrac{s_{li}s_{lj}}{d_l}
\sum\limits^{r-1}_{k=0}\bar s_{lk}s_{0k}]
\\&=&s_{0i}s_{0j}[\dfrac{s_{0i}s_{0j}}{d_0} d_0]+s_{0i}s_{0j}[0]
\\&=&s^2_{0i}s^2_{0j}=\delta(b_i)\delta(b_j)\end{array}$$
In the third last equality we use the fact that $ \sum\limits^{n}_{k=1} s_{0k}\bar  s_{0k} =d_0$ and the rows of $s$ are orthogonal.%   Hence the proof.
\end{proof}

Let $\tilde \BB$ and $\BB$ denote the columns of $s$ and $P$, respectively. The both vector spaces $\CC\tilde \BB$ and $\CC\BB$ are fusion algebras.
By the above theorem, $\CC\BB$ is also a $C$-algebra. In this paper we make use of the interrelationship between the norms of an Allen matrix and degrees of an associated $C$-algebra.% and we get the nice results.

\begin{definition}\cite{G1} Let $\BB=\{b_0,b_1,\hdots, b_{r-1}\}$ denote the columns of a matrix $P$ obtained from an Allen matrix (integral Fourier matrix, respectively) $s$. Then the $C$-algebra $(A,\BB, \delta)$ with basis $\BB$ is called a $C$-algebra arising from an Allen matrix (integral Fourier matrix, respectively) $s$.
\end{definition}

\begin{definition}\cite{G1}
Let $(A,\BB, \delta)$ be a $C$-algebra arising from an Allen matrix $s$ with standard basis $\BB=\{b_0,b_1,\hdots, b_{r-1}\}$. Let $k\in \ZZ^+$ and $\delta(b_i)=k$ for all $1\leq i\leq r-1$. Then $A$ is called a \emph{homogenous $C$-algebra} with \emph{homogeneity degree} $k$, and the associated Fourier matrix $S$ (Allen matrix $s$, respectively) is called a \emph{homogeneous Fourier matrix} (\emph{homogeneous Allen matrix}, respectively). If a Fourier matrix $S$ (Allen matrix $s$, respectively) is not a homogenous matrix then it is called a \emph{non-homogeneous Fourier matrix $S$} (\emph{non-homogeneous Allen matrix $s$}, respectively).
\end{definition}

\begin{lemma}\cite{G1} \label{SqaureDegreesForIntegralFurierCaseLemma}Let $(A,\BB)$ be a $C$-algebra arising from an integral Fourier matrix $s$ with standard  basis $\BB=\{b_0,b_1,\hdots, b_{r-1}\}$. If $\delta(b_i)=k_i$ for all $i$ then $k_i$ are square integers.
\end{lemma}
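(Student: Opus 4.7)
The plan is to read off the degrees $\delta(b_i)$ directly from the entries of $s$ in the top row, and then invoke integrality of $s$.

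First, I would recall the explicit description of $\delta$ obtained in the proof of Theorem \ref{CAlgebraThm}. There one sees that $b_i = s_{0i}\tilde b_i$, and the degree map is defined so that
$$\delta(b_i) = \delta(s_{0i}\tilde b_i) = s_{0i}^{\,2}.$$
In particular, each degree $k_i = \delta(b_i)$ equals the square of the $(0,i)$-entry of the Allen matrix $s$. (Note that, since $S$ is symmetric and $s_{ij}=S_{ij}/S_{i0}$, the quantity $s_{0i}$ is real and positive, matching property (vi) in the definition of a $C$-algebra.)

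Next, I would use the hypothesis that $s$ is an integral Fourier matrix. By definition, this means the entries of $s$ lie in $\ZZ$; in particular, $s_{0i}\in\ZZ$ for every $i$. Combining this with the formula above gives
$$k_i = s_{0i}^{\,2} \in \ZZ_{\geq 0},$$
a square of an integer, which is exactly the statement of the lemma.

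There is really no obstacle here: the argument amounts to unpacking the construction of the $C$-algebra from the Allen matrix (which already identifies the degrees with $s_{0i}^2$) and then applying the integrality hypothesis. The only thing worth emphasizing in the write-up is the justification that the degrees arising from the construction are exactly $s_{0i}^2$, rather than some other positive scalar, since this is what turns the integrality of $s$ into the perfect-square conclusion for the $k_i$.
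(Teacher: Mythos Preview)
The paper does not include its own proof of this lemma; it is stated with a citation to \cite{G1}. Your argument is correct and is exactly the natural one: the computation in the proof of Theorem~\ref{CAlgebraThm} already establishes $\delta(b_i)=s_{0i}^{2}$, and the integral Fourier matrix hypothesis gives $s_{0i}\in\ZZ$, so each $k_i$ is a perfect square.
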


\begin{prop}\cite{G1} \label{IntegralNormsAndDegreesProp}Let $(A,\BB)$ be a $C$-algebra arising from an Allen matrix $s$. Then the norms and the degrees of $A$ are rational integers.
\end{prop}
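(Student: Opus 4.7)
The plan is to reduce the proposition to the claim that each norm $d_i$ is a positive rational integer. From the construction of the $C$-algebra in Theorem~\ref{CAlgebraThm} we have $\delta(b_i) = s_{0i}^2$, and the symmetrizability identity $d_i|s_{ji}|^2 = d_j|s_{ij}|^2$ (which follows from $s\bar s^T$ being diagonal and the symmetry of $S$), specialized to $j=0$ with $s_{i0}=1$, gives $|s_{0i}|^2 = d_0/d_i$. Because $S_{0i}$ and $S_{00}$ are positive real, $s_{0i}$ is positive real, and so $\delta(b_i) = s_{0i}^2 = d_0/d_i$. Therefore, once the $d_i$ are known to be positive rational integers, each $\delta(b_i) = d_0/d_i$ is a positive rational; being also the square of the algebraic integer $s_{0i}$, it is then a positive rational integer.

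For algebraic integrality of the norms, the entries of $s$ are algebraic integers (as noted after the definition of an Allen matrix), and the identity $\bar s_{ij} = s_{i,j^*}$ (inherited from $\bar S_{ij} = S_{ij^*}$) lets me rewrite $d_i = \sum_j s_{ij}\, s_{i,j^*}$ as a $\ZZ$-polynomial in algebraic integers. Positivity is immediate from $d_i \geq |s_{i0}|^2 = 1$.

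For rationality I would work inside the fusion algebra $\CC\tilde\BB$, whose structure constants $N_{ijk}$ are rational integers. Its characters $\tilde\chi_j:\tilde b_i \mapsto s_{ji}$ are eigenvalues of integer matrices, so $\mathrm{Gal}(\overline{\QQ}/\QQ)$ acts by permuting them: $\sigma(\tilde\chi_j) = \tilde\chi_{\pi_\sigma(j)}$ for a permutation $\pi_\sigma$ of the index set. A short computation using $\bar s_{ij} = s_{i,j^*}$ then yields $\sigma(d_j) = d_{\pi_\sigma(j)}$.

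The hard part will be ruling out nontrivial Galois orbits on the $d_j$'s. My expectation is that combining the compatibility of Galois with the $*$-involution, the positivity $s_{i0} = 1 > 0$, and the row-orthogonality $\sum_j \bar s_{ij} s_{kj} = d_i \delta_{ik}$ will force $d_{\pi_\sigma(j)} = d_j$ for every $\sigma$. Once this is settled, each $d_j \in \QQ$, and being an algebraic integer, $d_j \in \ZZ_{\geq 1}$; the reduction in the first paragraph then finishes the proof for the degrees.
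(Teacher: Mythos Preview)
The paper does not actually prove this proposition; it is imported from \cite{G1} without argument, so there is no in-paper proof to compare against. I therefore evaluate your proposal on its own merits.

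Your first three paragraphs are correct. The reduction of the degree statement to integrality of the norms via $\delta(b_i)=s_{0i}^2=d_0/d_i$ is right, the algebraic integrality of $d_i=\sum_j s_{ij}s_{i,j^*}$ is immediate, and the observation that $\mathrm{Gal}(\overline{\QQ}/\QQ)$ permutes the characters of the integral fusion ring $\ZZ\tilde\BB$, yielding $\sigma(d_j)=d_{\pi_\sigma(j)}$, is sound.

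The fourth paragraph, however, is not a proof but an expressed hope: you say ``the hard part will be ruling out nontrivial Galois orbits on the $d_j$'s'' and that your ``expectation is that'' certain ingredients will force $d_{\pi_\sigma(j)}=d_j$. That step is the whole content of the rationality claim, and you have not carried it out. More to the point, the three ingredients you name do not close the gap. Compatibility of Galois with $*$ (that is, $\bar s_{ji}=s_{j,i^*}$) implies that every $\sigma$ commutes with complex conjugation on the field generated by the $s_{ji}$, so each $\sigma(d_j)=\sum_i|\sigma(s_{ji})|^2=d_{\pi_\sigma(j)}$ is again positive real; but a totally positive algebraic integer need not be rational. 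Row-orthogonality merely reproduces the same permutation identity. And $s_{i0}=1$ only records that every character takes the value $1$ on $\tilde b_0$, which does not single out row $0$. The one feature that distinguishes $\tilde\chi_0$ among the characters of $\ZZ\tilde\BB$ is the positivity $s_{0i}>0$ for all $i$, and Galois automorphisms do not respect positivity; so there is no evident reason from your listed tools why $\pi_\sigma$ should fix $0$ or preserve each $d_j$ individually.

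What is conspicuously absent from your paragraph~4 is the symmetry $S^T=S$, which you used in paragraph~1 but then dropped. This symmetry is exactly what links the row index (characters, where Galois acts) to the column index (basis, where Galois acts trivially), via $s_{ij}\sqrt{d_j}=s_{ji}\sqrt{d_i}$ and hence $\delta(b_j)=m_j$ as in Proposition~\ref{SymmetrizingPropertyProp}. Any completion of your approach will have to pass through this self-duality; as written, the proposal stops short of the essential step.
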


The following proposition states that the degrees (multiplicities) of a $C$-algebra  arising from an Allen matrix divide the order of the $C$-algebra. In addition, it states that not only the list of multiplicities matches with the list of degrees but also their indices match.

\begin{prop}\cite{G1}\label{SymmetrizingPropertyProp}
Let $(A,\mathbf{B}, \delta)$ be a $C$-algebra arising from Allen matrix $s$.
\begin{enumerate}
 \item  The multiplicities (degrees) divide the order of $A$, Also, $m_j= d_0/d_j$ for all $j$.

  \item The degrees of $A$ exactly match with the multiplicities of $A$, that is, $m_j=\delta(b_j)$ for all $j$.
\end{enumerate}
\end{prop}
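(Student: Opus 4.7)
The plan is to obtain both assertions of the proposition by a single calculation with the orthogonality relation displayed in the introduction, and then to match the resulting multiplicities against the degrees computed in the proof of Theorem \ref{CAlgebraThm}. Recall from that proof that $\delta(b_i) = s_{0i}^{\,2}$, so the order of $A$ is $n = \sum_i s_{0i}^{\,2} = d_0$ (the principal norm). Also, from the structure-constant formula $\lambda_{ijk} = N_{ijk}\,s_{0i}s_{0j}/s_{0k}$ together with $N_{ii^*0}=1$, one has $\lambda_{ii^*0} = s_{0i}^{\,2} = \delta(b_i)$, so $\BB$ is the standard basis. The irreducible characters are just the rows of $P$, i.e.\ $\chi_i(b_j) = P_{ij} = s_{ij}s_{0j}$, and since $s_{0,k^*}=s_{0k}$ and $s_{i,k^*} = \overline{s_{ik}}$ (which follows from $b_k^* = b_{k^*}$ together with $s_{0k}\in\RR_{>0}$), we have $\chi_i(b_{k^*}) = \overline{s_{ik}}\,s_{0k}$.

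Now substitute into the orthogonality relation
$$\sum_{k=0}^{r-1}\frac{\chi_i(b_{k^*})}{\lambda_{kk^*0}}\,\chi_i(b_k) \;=\; \frac{\chi_i(b_0)\,n}{m_{\chi_i}}.$$
The left side collapses to
$$\sum_k \frac{\overline{s_{ik}}\,s_{0k}}{s_{0k}^{\,2}}\cdot s_{ik}\,s_{0k} \;=\; \sum_k |s_{ik}|^2 \;=\; d_i,$$
and the right side equals $d_0/m_{\chi_i}$ because $\chi_i(b_0)=P_{i0}=s_{i0}s_{00}=1$ and $n=d_0$. Hence $m_{\chi_i} = d_0/d_i$, which is the second half of (i). For (ii), specialize the symmetrizability relation $d_i|s_{ji}|^2 = d_j|s_{ij}|^2$ (recorded just after the definition of an Allen matrix) to $j=0$: since $s_{i0}=1$, this yields $d_i s_{0i}^{\,2} = d_0$, whence $m_{\chi_i} = d_0/d_i = s_{0i}^{\,2} = \delta(b_i)$, so multiplicities match degrees index by index.

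Finally, the divisibility claim in (i) is immediate from $m_j \cdot d_j = d_0 = n$ together with Proposition \ref{IntegralNormsAndDegreesProp}, which guarantees $d_j \in \ZZ^+$. There is no real obstacle in this proof; the only delicate point is bookkeeping the conjugate-column identifications $s_{0,k^*}=s_{0k}$ and $s_{i,k^*}=\overline{s_{ik}}$ and verifying that $\lambda_{ii^*0}=\delta(b_i)$ so the orthogonality formula may be applied in its standard form. Once those normalizations are in place, the orthogonality relation and the single symmetrizability identity $d_i s_{0i}^{\,2} = d_0$ carry the entire argument.
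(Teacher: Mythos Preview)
The paper does not actually supply a proof of this proposition: it is quoted from \cite{G1} and stated without argument, so there is no in-paper proof to compare against. That said, your argument is correct and self-contained relative to the facts the paper \emph{does} record. The computation of $m_{\chi_i}=d_0/d_i$ via the orthogonality relation is clean, and your use of the symmetrizability identity $d_i|s_{ji}|^2=d_j|s_{ij}|^2$ at $j=0$ is exactly what ties the multiplicity index to the basis index so that $m_i=\delta(b_i)$ holds index-by-index rather than merely as multisets. The only minor point worth flagging is that invoking Proposition~\ref{IntegralNormsAndDegreesProp} for the divisibility step is legitimate (it precedes the present proposition and is itself cited from \cite{G1}), so there is no circularity.
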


%\begin{proof}
%For $(i)$.
%Let $m_j$ denote the multiplicity corresponding to the irreducible character $\chi_j$. Now, for $0\leq j\leq r-1$, we have
%$$\begin{array}{rcl} m_j &=& \delta(\BB^+)\chi_j(b_0)/\sum\limits_{i=0}^{r-1}\frac{|\chi_j(b_{i^*})|^2}{\lambda_{ii^*0}}
%=d_0/\sum\limits_{i=0}^{r-1}\dfrac{|s_{ji}s_{0i}|^2}{\delta(b_i)}
%%\\&=&d_0/\sum\limits_{i=0}^d\dfrac{|s_{ji}|^2\delta(b_i)}{\delta(b_i)}
%=d_0/\sum\limits_{i=0}^{r-1}|s_{ji}|^2
%=d_0/d_j.\end{array}$$
%
%\smallskip
%
%For $(ii)$. By Part $(i)$, $ m_j
%=d_0/d_j$ for all $j$. Also, $d_i|s_{ji}|^2 = d_j|s_{ij}|^2$  for all $0\leq i,j\leq r-1$. Therefore, $d_0/d_j=|s^2_{0j}|=s^2_{0j}$. Hence $m_j=s^2_{0j}=\delta(b_j)$ for all $j$.
%\end{proof}

%Let $(A,\BB, \delta)$ be a $C$-algebra arising from an Allen  matrix (integral Fourier matrix) $s$.   Then $A$ has positive integral multiplicities, see Proposition \ref{IntegralNormsAndDegreesProp} and Proposition \ref{SymmetrizingPropertyProp}. Thus $A$ has the standard character, see  \cite{JA}, \cite{HS2} and \cite{HSnew2}.

\begin{definition}\cite{G1}\label{AllenIntDef} Let $(A,\BB, \delta)$ be a $C$-algebra with standard basis $\BB=\{b_0,b_1,\hdots, b_{r-1}\}$ with a character table $P$. We say that $A$  satisfy the \emph{Allen integrality condition} if ${\lambda_{ijk}\sqrt{\delta(b_k)}}/{\sqrt{\delta(b_i)\delta(b_j)}}\in \ZZ$ for all $i,j,k$.
%We also say that $P$  satisfy the \emph{Allen integrality condition} if ${\lambda_{ijk}\sqrt{\delta(b_k)}}/{\sqrt{\delta(b_i)\delta(b_j)}}\in \ZZ$ for all $i,j,k$.
\end{definition}

%Note that every group algebra $\CC G$ for an abelian group $G$ satisfy the Allen integrality condition.
%If a $C$-algebra is arising from an Allen matrix then it is self-dual. But every self-dual $C$-algebra not necessarily  arise from an Allen matrix, see Example \ref{AllenIntConditionNotSatisfiedEx}. Therefore, in general, the converse is not true. In the next theorem we prove that the converse is also true for $C$-algebras satisfying the Allen integrality condition. After establishing the following theorem the author of this paper noted that a similar result is proved by Eiichi Bannai for fusion algebras at algebraic level with real structure constants, see \cite[Theorem 3.1]{EB}.

If a $C$-algebra is arising from an Allen matrix then it is self-dual. In general, the converse is not true. In the next theorem we states that the converse is also true for $C$-algebras satisfying the Allen integrality condition.

\begin{thm}\cite{G1}\label{IntegralCAlgebrasAndFourierMatrices}
Let $(A,\BB, \delta)$ be a $C$-algebra with standard basis $\BB=\{b_0,b_1, \hdots, b_{r-1}\}$. Then $A$ is self-dual and satisfies the Allen integrality condition if and only if $A$ arises from an Allen matrix.
\end{thm}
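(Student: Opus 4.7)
My plan is to read off both implications from the construction in the proof of Theorem~\ref{CAlgebraThm}, which exhibits $\delta(b_j)=s_{0j}^{2}$, $b_j=s_{0j}\tilde b_j$, and the relation
$$\lambda_{ijk}=N_{ijk}\,\frac{s_{0i}s_{0j}}{s_{0k}}$$
between the structure constants on the bases $\BB$ and $\tilde\BB$.

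\emph{Forward direction.} Assume $A$ arises from an Allen matrix $s$. Multiplying the identity above by $\sqrt{\delta(b_k)}/\sqrt{\delta(b_i)\delta(b_j)}=s_{0k}/(s_{0i}s_{0j})$ gives $\lambda_{ijk}\sqrt{\delta(b_k)}/\sqrt{\delta(b_i)\delta(b_j)}=N_{ijk}\in\ZZ$, which is the Allen integrality condition. Self-duality then comes from the symmetry of the Fourier matrix: using $p_{ij}=s_{ij}s_{0j}$, $s_{0j}=\sqrt{d_0/d_j}$, and the symmetrizability $s_{ij}\sqrt{d_j}=s_{ji}\sqrt{d_i}$, a short computation yields the duality relation $p_{ji}\delta(b_j)=p_{ij}\delta(b_i)$ on the character table.

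\emph{Converse.} Assume $A$ is self-dual and satisfies the Allen integrality condition. I define the candidate Allen matrix by $s_{ij}:=p_{ij}/\sqrt{\delta(b_j)}$. Axiom (i) of the Allen matrix definition, $s_{i0}=1$, is immediate from $p_{i0}=\delta(b_0)=1$. For axiom (v), set $\tilde b_j:=b_j/\sqrt{\delta(b_j)}$ and observe that $\tilde b_i\tilde b_j=\sum_k N_{ijk}\tilde b_k$ with
$$N_{ijk}=\lambda_{ijk}\,\frac{\sqrt{\delta(b_k)}}{\sqrt{\delta(b_i)\delta(b_j)}}\in\ZZ$$
by the Allen integrality hypothesis; Higman's orthogonality relation displayed in the introduction then rewrites $N_{ijk}$ as $\sum_l s_{li}s_{lj}\bar s_{lk}/d_l$, with $d_l:=\sum_j|s_{lj}|^2$. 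Finally, for the symmetry and positivity required to verify that $S_{ij}:=s_{ij}/\sqrt{d_i}$ is a Fourier matrix, I invoke self-duality in the form $p_{ji}\delta(b_j)=p_{ij}\delta(b_i)$ to obtain $s_{ij}\sqrt{\delta(b_i)}=s_{ji}\sqrt{\delta(b_j)}$, combine with the orthogonality consequence $d_i\,\delta(b_i)=d_0$ to conclude $s_{ij}\sqrt{d_j}=s_{ji}\sqrt{d_i}$, and note that $S_{i0}=1/\sqrt{d_i}>0$.

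The main obstacle I anticipate is pinning down the precise content of self-duality in the $C$-algebra language: specifically, deducing both $m_j=\delta(b_j)$ (the analogue of Proposition~\ref{SymmetrizingPropertyProp}) and the balancing identity $d_i\,\delta(b_i)=d_0$ from the bare hypothesis, without yet having an Allen matrix in hand. Once these two scaling consequences are available, the remaining verifications are formal manipulation of the orthogonality relation.
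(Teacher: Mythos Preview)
The paper does not contain a proof of this theorem: it is stated with a citation to \cite{G1} and no argument is given here. So there is nothing in the present paper to compare your proposal against.

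That said, your outline is the natural one and matches how the paper sets things up. The forward direction is exactly what the constructions in Lemma~\ref{CAlgebraThm} and Theorem~\ref{CAlgebraThm} hand you: the identity $\lambda_{ijk}=N_{ijk}\,s_{0i}s_{0j}/s_{0k}$ together with $\delta(b_j)=s_{0j}^{2}$ immediately gives the Allen integrality condition, and the symmetrizability $s_{ij}\sqrt{d_j}=s_{ji}\sqrt{d_i}$ (equivalently, $S=S^{T}$) yields the duality relation on $P$ you wrote down.

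For the converse you have correctly isolated the one nontrivial step. What you need from ``self-dual'' is precisely that one can index the irreducible characters so that $m_j=\delta(b_j)$ and hence $d_i\,\delta(b_i)=d_0$; in the paper's framework this is the content of Proposition~\ref{SymmetrizingPropertyProp}, and in the general $C$-algebra setting it is the standard statement that for a self-dual $C$-algebra one may arrange $P\bar P=nI$ (as the paper uses in Theorem~\ref{SimulataneousPermThm}). Once you invoke that, your definition $s_{ij}=p_{ij}/\sqrt{\delta(b_j)}$ and the Higman orthogonality relation do the rest. So the only thing your write-up should make explicit is which formulation of self-duality you are taking as hypothesis, and why it delivers $m_j=\delta(b_j)$ after a suitable simultaneous row/column reindexing; the remaining verifications are, as you say, formal.
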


\smallskip
%
% For $(ii)$. Let $n(=d_0)$ be the order of $A$. Since $A$ is self dual, $P\bar P=nI$, thus $(\mbox{det}(P))^2=n$. Hence $\sqrt{n}$ is an integer.
%\end{proof}

%\medskip

\section{General results on non-homogeneous $C$-algebras arising from Allen matrices}

\medskip

In this section, we prove the results that are helpful in recognizing the $C$-algebras that are not arising from Allen matrices by mere looking at the character tables of the $C$-algebras, in particular, the first row of the character tables. Note that an adjacency algebra of a commutative association scheme is a $C$-algebra. All the character tables of the association schemes used here are produced by Hanaki and Miyamoto, see \cite{HM}. In this section, after the proof of each proposition we will give only few examples of the character tables of the association schemes just to demonstrate the application of the proposition. Though we remark that the character tables given in \cite{HM} have information about the multiplicities but we do not need that information  to apply the following results.

\smallskip

In the following lemma we prove that it does not require all the structure constants to be nonnegative for the subset of basis elements with degree $1$ to become a group.

\begin{lemma}\label{ElementaryGroupLemma}
Let $(A,\BB, \delta)$ be a $C$-algebra arising from an  Allen matrix $s$ with standard basis $\BB=\{b_0,b_1,\hdots, b_{r-1}\}$. Let the structure constants $\lambda_{ii^*j}\geq 0$ for all $i,j$. If $L=\{b\in \BB: \delta(b)=1\}$ then $L$ is an abelian group.
\end{lemma}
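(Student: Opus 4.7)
The plan is to verify the group axioms for $L$ one at a time, with the substantive work concentrated in the closure step. Note right away that $b_0 \in L$ since $\delta(b_0)=1$, that $L$ is closed under the involution $*$ by axiom (vi), and that any multiplication in $L$ will be commutative because $A$ is. The two real tasks are therefore (a) producing a multiplicative inverse inside $L$ for each element of $L$, and (b) showing $L$ is closed under multiplication.

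For (a) I would prove that $b_i b_{i^*} = b_0$ whenever $b_i \in L$. Expanding $b_i b_{i^*} = \sum_k \lambda_{ii^*k} b_k$ and using the standard-basis identity $\lambda_{ii^*0}=\delta(b_i)=1$ together with the fact that $\delta$ is an algebra homomorphism, one gets $1 = \delta(b_i)\delta(b_{i^*}) = 1 + \sum_{k\neq 0}\lambda_{ii^*k}\delta(b_k)$, so $\sum_{k\neq 0}\lambda_{ii^*k}\delta(b_k)=0$. The hypothesis $\lambda_{ii^*k}\geq 0$ combined with $\delta(b_k)>0$ then forces every term to vanish, giving $b_i b_{i^*}=b_0$ and hence $b_{i^*}$ as the inverse of $b_i$ in $L$.

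For (b), which I expect to be the main obstacle, I take $b_i,b_j \in L$, set $c=b_ib_j$, and use commutativity of $A$ together with (a) to deduce $cc^*=(b_ib_{i^*})(b_jb_{j^*})=b_0$ and $\delta(c)=1$. Extracting the coefficient of $b_0$ from $cc^*=b_0$ (using $\lambda_{k,k^*,0}=\delta(b_k)$ in the standard basis) yields $\sum_k \lambda_{ijk}^2\delta(b_k)=1$, while $\delta(c)=1$ yields $\sum_k \lambda_{ijk}\delta(b_k)=1$. The difficulty is that no sign information is available for $\lambda_{ijk}$ when $j\neq i^*$, so these two identities by themselves do not force the $\lambda_{ijk}$ into $\{0,1\}$. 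To break the symmetry I would invoke Theorem~\ref{IntegralCAlgebrasAndFourierMatrices}: since $A$ arises from an Allen matrix it satisfies the Allen integrality condition, which together with $\delta(b_i)=\delta(b_j)=1$ gives $\lambda_{ijk}\sqrt{\delta(b_k)}\in\ZZ$. Then each $\lambda_{ijk}^2\delta(b_k)$ is a nonnegative rational integer, and summing to $1$ singles out a unique $m$ with $\lambda_{ijm}^2\delta(b_m)=1$ and $\lambda_{ijk}=0$ for $k\neq m$. The degree identity then reduces to $\lambda_{ijm}\delta(b_m)=\pm\sqrt{\delta(b_m)}=1$, forcing $\delta(b_m)=1$ (so $b_m \in L$) and $\lambda_{ijm}=1$. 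Thus $b_ib_j=b_m\in L$, which combined with (a) shows that $L$ is an abelian group.
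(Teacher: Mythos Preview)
Your argument is correct, and part (a) is exactly the paper's proof: expand $b_ib_{i^*}$, use $\lambda_{ii^*0}=\delta(b_i)=1$, apply $\delta$, and invoke the nonnegativity hypothesis to kill the remaining terms. The paper then simply writes ``Therefore, $L$ is a group'' and moves on --- it does not write down a closure argument at all.

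Your part (b) therefore goes beyond the paper. The route you take, via the Allen integrality condition of Theorem~\ref{IntegralCAlgebrasAndFourierMatrices}, is the natural way to make closure rigorous here: with $\delta(b_i)=\delta(b_j)=1$ the quantities $\lambda_{ijk}\sqrt{\delta(b_k)}=N_{ijk}$ are integers, so the identity $\sum_k\lambda_{ijk}^2\delta(b_k)=1$ (obtained from $cc^*=b_0$) is a sum of nonnegative integers equal to $1$, isolating a single index $m$; comparison with $\sum_k\lambda_{ijk}\delta(b_k)=1$ then forces $\delta(b_m)=1$ and $\lambda_{ijm}=1$. This is a genuine contribution, not just extra bookkeeping: the stated hypothesis $\lambda_{ii^*j}\ge 0$ gives no sign control on $\lambda_{ijk}$ for $j\neq i^*$, so without the integrality coming from the Allen-matrix provenance one cannot conclude closure from the two sum identities alone. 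In effect you are making explicit the structure the paper's one-line proof leans on implicitly.
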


\begin{proof}
As $\delta(b_0)=1$, $b_0\in L$. Also $b_ib_{i^*}=\lambda_{ii^*0}b_0+\sum_j\lambda_{ii^*j}b_j$ for all $i$. Since $\lambda_{ii^*j}\geq 0$ and  $\delta(b_i)=1$,  $b_ib_{i^*}=b_0$ for all $i$.  Thus, $b^{-1}_i=b_{i^*}$.  Therefore, $L$ is a group. Since $A$ is commutative, $L$ is an  abelian group.
\end{proof}

In the next theorem, we collect the conditions under which a $C$-algebra arising from a real Allen matrix cannot have just one degree different from $1$.

\begin{thm}\label{OneDegreeDifferentThm} Let $(A,\BB, \delta)$ be a $C$-algebra arising from a real Allen matrix $s$ with standard basis $\BB=\{b_0,b_1,\hdots, b_{r-1}\}$. Let the structure constants $\lambda_{iij}\geq 0$ for all $i,j$.
\begin{enumerate}
  \item If the rank of $A$ is an even integer but greater than $2$, then $A$ cannot have only one degree different from $1$.
  \item If rank of $A$ is greater than $3$, and  $|p_{ij}|\leq p_{0j}$ for all $i,j$. Then $A$ cannot have only one degree greater or equal to $r$ and the remaining degrees equal to $1$.
\end{enumerate}
\end{thm}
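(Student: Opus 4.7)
The plan hinges on two structural consequences of the hypotheses. First, reality of $s$ forces $A$ to be symmetric: each entry $p_{ij} = s_{ij}s_{0j}$ is real, so every column $b_j$ of $P$ equals its own conjugate, whence $b_{i^*} = b_i$ for every $i$. Second, under symmetry the hypothesis $\lambda_{iij} \geq 0$ coincides with $\lambda_{ii^*j} \geq 0$, so Lemma \ref{ElementaryGroupLemma} applies to the set $L = \{b \in \BB : \delta(b) = 1\}$, realizing $L$ as an abelian group. Within $L$ we have $b_i^{-1} = b_{i^*} = b_i$, so every element squares to $b_0$; hence $L \cong (\ZZ/2\ZZ)^t$ and $|L| = 2^t$ for some $t \geq 0$.

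For part (i), suppose exactly one degree differs from $1$. Then $|L| = r - 1$ must be a power of $2$; but $r > 2$ even makes $r - 1$ odd and at least $3$, which is incompatible.

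For part (ii), let $\delta(b_m) = k \geq r$ be the sole degree above $1$, so that $A$ has order $n = (r-1) + k$. Proposition \ref{SymmetrizingPropertyProp} identifies each multiplicity with the corresponding degree and asserts that every degree divides the order, giving $k \mid n$; subtracting yields $k \mid r - 1$. Since $r > 3$ makes $r - 1 \geq 3 > 0$ and $k \geq r > r - 1$, no such divisibility is possible, a contradiction.

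The main subtle point is the first step---passing from realness of $s$ to symmetry of $A$---which is immediate via the formula $p_{ij} = s_{ij}s_{0j}$; after that, both parts reduce to short arithmetic. I note that the hypothesis $|p_{ij}| \leq p_{0j}$ in (ii) is not invoked above, but it supports an alternative character-theoretic route: the pointwise bound $|\chi(b_k)|^2/\delta(b_k) \leq \delta(b_k)$ combined with row orthogonality $\sum_k |\chi(b_k)|^2/\delta(b_k) = n/m_\chi$ forces $\chi(b_m) = \pm k$ for every multiplicity-$1$ character $\chi$, whereupon the column orthogonality $\sum_i m_{\chi_i}\chi_i(b_m)^2 = n\delta(b_m)$ yields $\chi_m(b_m)^2 = (r-1) - k(r-2)$, which is negative once $k \geq r \geq 4$.
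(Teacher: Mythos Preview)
Your proof of (i) matches the paper's: both invoke Lemma~\ref{ElementaryGroupLemma} together with the elementary-abelian-$2$ structure of $L$ forced by symmetry, and then observe that $|L|=r-1$ is odd and at least $3$ when $r>2$ is even, hence not a power of $2$. You make the passage ``$s$ real $\Rightarrow$ $A$ symmetric $\Rightarrow$ $\lambda_{iij}=\lambda_{ii^*j}$'' explicit, which the paper leaves implicit.

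For (ii) you take a genuinely different and shorter route. The paper actually uses the hypothesis $|p_{ij}|\le p_{0j}$: from $m_0=m_1=\cdots=m_{r-2}=1$ it obtains $d_0=d_1=\cdots=d_{r-2}$, and the bound then forces the entries of rows $1,\dots,r-2$ of $P$ to have the saturated magnitudes $1,\dots,1,k$; the paper argues this leaves too few distinct row patterns for $P$ to remain nonsingular. Your argument avoids the character table entirely: from Proposition~\ref{SymmetrizingPropertyProp} you get $k\mid n=(r-1)+k$, hence $k\mid r-1$, which is impossible once $k\ge r>r-1>0$. This divisibility argument works already for every $r\ge 2$ and uses neither the pointwise bound $|p_{ij}|\le p_{0j}$, nor the sign hypothesis on the structure constants, nor even the realness of $s$; so it proves a strictly stronger statement. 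The alternative character-theoretic sketch you append at the end is the one that is closer in spirit to the paper's own line of reasoning.
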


\begin{proof}  For $(i)$. By Lemma \ref{ElementaryGroupLemma}, the elements of the basis $\BB$ with degree $1$ form an elementary abelian group, the order of the group is an even integer. The assertion follows from the order considerations.

\smallskip

For $(ii)$. Let only one degree of $A$ be greater or equal to $r$ and the remaining degrees be equal to $1$. Without loss of generality, let first row of the character table  be  $[1,1, \hdots,1,k]$, where $k\geq r$, see Theorem \ref{SimulataneousPermThm}. %Therefore, first row of the character table is $[1,1, \hdots,1,k]$.
Since $m_0=m_1=\hdots=m_{r-2}$, $d_0=d_1=\hdots=d_{r-2}$. As $|p_{ij}|\leq p_{0j}$, therefore the only possible entries of row 2, row 3, $\hdots$, row $r-2$ of $P$ are  $[1,1, \hdots,1,-k]$, which is not possible as  $P$  is nonsingular.
\end{proof}

The Part $(i)$ of the above theorem is also true if the order of the algebra is greater than $2$, see Theorem \ref{RankTwoThm}. The character table of the association scheme {\tt as4(2)} is an example where Part $(ii)$ of the above theorem fails for the rank $3$.
%For example, the adjacency algebra of every association scheme of rank $2$ and order greater than $2$ cannot arise from an Allen matrix.
The examples of the $C$-algebras, but not self-dual, include the adjacency algebras of the association schemes {\tt as6(2), as6(4), as8(2)} and {\tt as8(8)}.

%In the next proposition, we find a condition on the order of a $C$-algebra that forces all degrees of the $C$-algebra to be $1$. %But for rank more than $3$ there  might at least two equal rows of $P$, which is not possible.

\begin{cor}\label{OneDegreeDifferentCor} Let $(A,\BB, \delta)$ be a $C$-algebra arising from a real Allen matrix $s$ with standard basis $\BB=\{b_0,b_1,\hdots, b_{r-1}\}$. If $2$ does not divide the order of $A$ then $\delta(b_i)\neq 1$ for some $i>0$.
\end{cor}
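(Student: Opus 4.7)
The plan is to prove the contrapositive: assuming $\delta(b_i)=1$ for every $i$, I will show that the order $n=\sum_i\delta(b_i)$ is even, which gives the result whenever $r\geq 2$.

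The key input is that $s$ is real, which forces $A$ to be \emph{symmetric}, i.e.\ $b_i=b_{i^*}$ for all $i$. Indeed, each entry $P_{ij}=s_{ij}s_{0j}$ of the character table is real, so for every irreducible character $\chi$ one has $\chi(b_{i^*})=\overline{\chi(b_i)}=\chi(b_i)$. Since $P$ is nonsingular its columns are distinct, forcing $b_i=b_{i^*}$. I then apply Lemma \ref{ElementaryGroupLemma}---whose nonnegativity hypothesis on the structure constants I take to be inherited from the ambient Theorem \ref{OneDegreeDifferentThm}---to the set $L=\{b\in\BB:\delta(b)=1\}$. Under the contrapositive hypothesis $L=\BB$, so the lemma gives an abelian group structure on $\BB$ with inversion $b_i\mapsto b_{i^*}=b_i$. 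Every element therefore has order at most two, and $\BB$ is an elementary abelian $2$-group.

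The rest is a counting argument: an elementary abelian $2$-group of order $r\geq 2$ has $r$ equal to a power of two, hence even. Since $n=\sum_i\delta(b_i)=r$ under the current hypothesis, $n$ is even, contradicting $2\nmid n$. The main subtlety---modest as it is---is the passage from ``$P$ is real'' to ``$A$ is symmetric'', the sort of step that is tempting to gloss over; once that is in place, the conclusion is essentially a one-line consequence of Lemma \ref{ElementaryGroupLemma} plus the observation that self-inverse abelian groups have even order.
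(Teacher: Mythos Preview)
Your proof is correct and follows the same route the paper intends: the corollary is meant to be read as an immediate consequence of the argument for Theorem~\ref{OneDegreeDifferentThm}(i), namely that the degree-$1$ basis elements form an elementary abelian $2$-group via Lemma~\ref{ElementaryGroupLemma}, so if all degrees equal $1$ then $n=r$ is a power of $2$. Your explicit justification that a real Allen matrix forces $A$ to be symmetric (hence every $b_i$ is self-inverse, giving the ``elementary'' in ``elementary abelian'') actually fills a step the paper leaves implicit, and your remark that the nonnegativity hypothesis $\lambda_{iij}\geq 0$ must be carried over from Theorem~\ref{OneDegreeDifferentThm} is a correct observation about a missing hypothesis in the corollary as stated.
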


%\begin{lemma}\label{OrderRankComparisionLemma}Let $(A,\BB, \delta)$ be a $C$-algebra arising from an Allen matrix $s$ with standard  basis $\BB=\{b_0,b_1,\hdots, b_{r-1}\}$. Let $p$ be the smallest prime divisor of the order of $A$. If the order of $A$ is less than or equal to $p(r-1)+1$, then  $\delta(b_i)=1$ for some $i>0$.
%\end{lemma}
%
%\begin{proof}
%By Proposition \ref{SymmetrizingPropertyProp}, every degree divides the order of $A$. Therefore, for all $\delta(b_i)\neq 1$, we have $\delta(b_i)\geq p$.  Since there are $r-1$ nontrivial degrees,  the assertion is true.
%\end{proof}
%
%%\begin{remark}
%%We remark that the result is also true if we replace $p$ by the smallest degree of the algebra.
%%\end{remark}
%
%Moreover, the result is also true if we replace $p$ by the smallest degree of the algebra. %For example, from the character table of the association schemes {\tt as08(14), as8(15), as8(16), as16(20), as16(21)} and {\tt as16(62)} etcetera.

The next proposition helps to recognize the $C$-algebras not arising from Allen matrices.

\begin{prop}Let $(A,\BB, \delta)$ be a $C$-algebra arising from an Allen matrix with standard  basis $\BB=\{b_0,b_1,\hdots, b_{r-1}\}$. Let $\delta(b_i)=k_i$ for all $i$.  Let the number of $i$'s such that  $\delta(b_i)=1$ be $t$ and the  remaining $r-t$ degrees $\delta(b_j)=k$. Then the possible values of $k$ are the divisors of $t$.
\end{prop}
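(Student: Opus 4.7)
The plan is to exploit Proposition \ref{SymmetrizingPropertyProp}, which states that for a $C$-algebra arising from an Allen matrix, each degree $\delta(b_j)$ divides the order $n = \delta(\BB^+)$ of the algebra. Combined with the explicit form of the order under the hypothesis, this should force a divisibility relation between $k$ and $t$ almost immediately.

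First I would compute the order of $A$ directly from the definition. Since exactly $t$ of the basis elements satisfy $\delta(b_i) = 1$ (note that $b_0$ is automatically one of them, since $\delta(b_0) = 1$) and the remaining $r - t$ basis elements satisfy $\delta(b_j) = k$, we have
\[
n = \delta(\BB^+) = \sum_{i=0}^{r-1} \delta(b_i) = t + (r-t)k.
\]
By Proposition \ref{IntegralNormsAndDegreesProp}, all degrees are rational integers, so in particular $k \in \ZZ$ and this expression is an integer.

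Next I would invoke Proposition \ref{SymmetrizingPropertyProp}(ii), which guarantees that the multiplicity $m_j$ of each irreducible character equals the corresponding degree $\delta(b_j)$, and Proposition \ref{SymmetrizingPropertyProp}(i), which says that every multiplicity divides the order $n$. Applying this to a basis element $b_j$ with $\delta(b_j) = k$ (such an element exists since we may assume $r > t$, as otherwise the statement is vacuous), we conclude that $k \mid n$. Substituting the expression for $n$,
\[
k \mid t + (r-t)k.
\]
Since $k$ trivially divides $(r-t)k$, subtracting yields $k \mid t$, which is the desired conclusion.

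There is essentially no obstacle here: once Proposition \ref{SymmetrizingPropertyProp} is in hand, the argument is a one-line divisibility manipulation. The only subtlety worth flagging is the boundary case $r = t$, in which there are no basis elements of degree $k$ and the statement about ``the possible values of $k$'' is vacuous, so we may assume $r - t \geq 1$ when applying the divisibility step.
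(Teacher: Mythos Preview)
Your proof is correct and follows essentially the same route as the paper: compute the order as $n = t + (r-t)k$, invoke Proposition~\ref{SymmetrizingPropertyProp} to obtain $k \mid n$, and conclude $k \mid t$. Your handling of the vacuous case $r = t$ and the explicit citation of Proposition~\ref{IntegralNormsAndDegreesProp} for integrality are a bit more careful than the paper's version, but the argument is the same.
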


\begin{proof}
By  Proposition \ref{IntegralNormsAndDegreesProp}, $\delta(b_l)\in \ZZ$, for all $l$. Therefore, by Proposition \ref{SymmetrizingPropertyProp},  ${d_0}{k^{-1}}\in \ZZ$. Thus $d_0=t+(d_0-t)k$ implies $tk^{-1} \in \ZZ$. Hence $k$ is in the subset of the divisors of $t$.
\end{proof}

%The above proposition is useful to recognize the $C$-algebras, in particular association schemes, not arising from Allen matrices.

For example, from the character table of the association schemes {\tt as9(3), as9(8), as10(6), as16(20), as16(21)} and {\tt as16(62)} etcetera,  we conclude that these are not the character tables of the $C$-algebras, adjacency algebras, arising from Allen matrices. By the above theorem we also conclude that any $C$-algebra of rank $2$ and order greater than $2$ cannot arise from an Allen matrix, also see Theorem \ref{RankTwoThm}.

In the next proposition, we generalize the Cuntz's result about integral Fourier matrices with nonnegative structure constants, see \cite{MC1}. This proposition is for real Allen matrices and it does not require all the structure constants to be nonnegative. %It helps us to recognize the $C$-algebras not arising from the Allen matrices.

\begin{prop}
Let $(A,\BB, \delta)$ be a $C$-algebra arising from a real Allen matrix with standard basis $\BB=\{b_0,b_1, \hdots,b_{r-1}\}$. Let  the structure constants $\lambda_{iij}\geq 0$ for all $i,j$. If all the degrees of $A$ different from $1$ are all equal then the nontrivial degree might be power of $2$. Note: the algebra $A$  need not be homogeneous.
\end{prop}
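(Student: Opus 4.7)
The plan is to combine the immediately preceding proposition (which yields $k \mid t$, where $t$ is the number of basis elements of degree $1$) with the observation that for a \emph{real} Allen matrix the group $L = \{b \in \BB : \delta(b) = 1\}$ must be an elementary abelian $2$-group; hence $t$ is a power of $2$, which forces $k$ to be a power of $2$.

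First I would establish that $A$ is symmetric. Because $s$ is real and $p_{ij} = s_{ij}s_{0j}$, the character table $P$ has real entries, so every irreducible character of $A$ is real-valued on $\BB$. Consequently $\chi(b_i^*) = \overline{\chi(b_i)} = \chi(b_i)$ for every $\chi \in \mathrm{Irr}(A)$, and linear independence of the characters of the commutative semisimple algebra $A$ forces $b_i^* = b_i$ for all $i$.

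With symmetry in hand, the hypothesis $\lambda_{iij} \geq 0$ reads as $\lambda_{ii^*j} \geq 0$, so Lemma \ref{ElementaryGroupLemma} applies and $L$ is an abelian group of order $t$ under the algebra multiplication, with group inverse $b^{-1} = b^*$ for every $b \in L$. But $b^* = b$ by symmetry, so $b^2 = b \cdot b^{-1} = b_0$ for every $b \in L$. Hence every element of $L$ has order dividing $2$, and $L$ is an elementary abelian $2$-group; in particular $t = |L| = 2^s$ for some integer $s \geq 0$.

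Finally, invoking the preceding proposition gives $k \mid t = 2^s$, and since $k > 1$ (being a degree different from the trivial one) the only possibility is that $k$ is a positive power of $2$. The one genuinely substantive step is deducing symmetry from the realness of $s$; once symmetry is in place, the rest is a short algebraic observation feeding directly into the preceding proposition.
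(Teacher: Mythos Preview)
Your proof is correct and follows essentially the same route as the paper's: show that the degree-$1$ elements form an elementary abelian $2$-group (so $t=2^m$), then use the divisibility $k\mid t$. The only difference is cosmetic: you invoke the immediately preceding proposition for $k\mid t$, while the paper re-derives it on the spot from $k\mid d_0$ and $d_0=2^m+nk$ via Proposition~\ref{SymmetrizingPropertyProp}. Your argument is in fact more careful than the paper's, since you explicitly justify why realness of $s$ forces $b_i^*=b_i$ (hence $b^2=b_0$ for $b\in L$), whereas the paper simply cites Lemma~\ref{ElementaryGroupLemma} as producing an ``elementary abelian'' group even though that lemma as stated only yields ``abelian''.
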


\begin{proof} Let $\delta(b_i) =k$ for some $1\leq i\leq r-1$. By Lemma \ref{ElementaryGroupLemma}, the elements of $\BB$ with degree $1$ form an elementary abelian group. Thus the number of linear elements of $\BB$ is a power of $2$, say $2^m$, $m\in \ZZ^+$. If the number of nonlinear degrees is equal to $n$ then $d_0=2^m+nk$. By Proposition \ref{SymmetrizingPropertyProp}, $k$ divides $d_0$ implies $k$ divides $2^m$.
\end{proof}

By the above proposition,  from the character table of the association schemes {\tt as12(9), as14(4), as16(10), as16(20), as16(21)} and {\tt as16(62)} etcetera, we conclude that these are not the character tables of the $C$-algebras arising from Allen matrices.

In the next proposition we examine the possible number of occurrences of a degree if it is one of the degrees and satisfy certain criteria. %that divides the bigger degrees and  divisible by smaller degrees.

\begin{prop}\label{PredictionOfDegreesAllenCaseProp}Let $(A,\BB, \delta)$ be a $C$-algebra arising from an Allen matrix, with standard  basis $\BB=\{b_0,b_1,\hdots, b_{r-1}\}$. Let $\delta(b_i)=k_i$ for all $i\geq 1$.  If for a given $j$, $k_j(\neq 1)$ is a smallest nontrivial degree that divides all the nontrivial degrees $k_l(\neq 1)$ then the number of degrees equal to $1$ is a multiple of $k_j$.
In general, let $k_t$ be a degree divisible by all the smaller degrees and divides all the bigger degrees. Let $k_s$ be the largest degree among all the degrees strictly less than $k_t$. Let the sum of the degrees less than $k_s$ be $\beta_1 k_s$ and the number of degrees equal to $k_s$ be $\beta_2$. Then $\beta_1k_s+\beta_2k_s$ is divisible by $k_t$. Note: $\beta_1 k_s$ is not equal to zero only if $k_s>1$.
\end{prop}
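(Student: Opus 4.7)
The plan is to reduce both claims to a single divisibility identity extracted from Proposition \ref{SymmetrizingPropertyProp}. That proposition tells us that for a $C$-algebra arising from an Allen matrix, the multiplicities coincide with the degrees, i.e.\ $m_j=\delta(b_j)=k_j$, and moreover each multiplicity divides the order $n=\sum_i k_i$ of $A$. Combined with Proposition \ref{IntegralNormsAndDegreesProp}, which guarantees that all the $k_i$ are rational integers, the slogan is: \emph{every degree divides the sum of all degrees}. Both parts of the proposition are just book-keeping on top of this slogan.

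For the first assertion, I would write $n=\sum_i k_i$ as a sum of two pieces, namely the $t$ basis elements of degree $1$ and the remaining nontrivial degrees:
\[
n \;=\; t + \sum_{k_l\neq 1} k_l.
\]
By hypothesis $k_j$ divides every nontrivial degree $k_l$, so $k_j$ divides the second summand. Since $k_j\mid n$ by the slogan above, transitivity of divisibility forces $k_j \mid t$, which is exactly the claim that the number of degrees equal to $1$ is a multiple of $k_j$.

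For the general statement, I would partition the index set of degrees into three blocks based on where each $k_i$ sits relative to $k_s$ and $k_t$: those with $k_i<k_s$, those with $k_i=k_s$, and those with $k_i\geq k_t$. The crucial observation is that, by the defining property of $k_s$ as the largest degree strictly less than $k_t$, no degree lies in the open interval $(k_s,k_t)$, so this partition is exhaustive. The sum over the first two blocks is $\beta_1 k_s+\beta_2 k_s$ by definition. Since $k_t$ divides every degree $k_i\geq k_t$ (it divides itself, and by hypothesis divides every larger degree), $k_t$ divides the sum over the third block. Because $k_t\mid n$, subtracting gives
\[
k_t \;\Big|\; n - \sum_{k_i\geq k_t} k_i \;=\; \beta_1 k_s + \beta_2 k_s,
\]
which is the desired divisibility. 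The sign condition that $\beta_1 k_s$ can vanish only when $k_s>1$ is automatic: if $k_s=1$, then there are no degrees strictly smaller than $1$ in our list.

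I don't expect any serious obstacle: the entire argument is divisibility bookkeeping once Proposition \ref{SymmetrizingPropertyProp} supplies the key fact that each degree divides the order. The only point requiring care is justifying that the three-block partition is exhaustive for Part 2, and this is immediate from the maximality of $k_s$ among degrees strictly less than $k_t$.
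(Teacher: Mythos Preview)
Your proposal is correct and follows essentially the same route as the paper: both arguments invoke Proposition \ref{SymmetrizingPropertyProp} to obtain that each degree divides the order $d_0=\sum_i k_i$, then split this sum into the relevant blocks and use the hypothesis on $k_j$ (resp.\ $k_t$) to clear all but the desired piece. Your explicit remark that no degree lies in the open interval $(k_s,k_t)$, justifying exhaustiveness of the three-block partition, is a small clarification the paper leaves implicit.
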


\begin{proof}
By Proposition \ref{SymmetrizingPropertyProp},  ${d_0}{\delta(b_i)^{-1}}=d_i$ for each $i$. Therefore, $d_0=1+\sum\limits^{r-1}_{i=1}k_i$ and ${d_0}{k^{-1}_j} \in \ZZ$ implies $ ({1+\sum\limits^{r-1}_{i=1}k_i}){k^{-1}_j}
=({\sum\limits^{}_{k_i=1}k_i+\sum\limits^{}_{k_i\geq k_j}k_i}){k^{-1}_j}
=({\sum\limits^{}_{k_i=1}k_i}){k^{-1}_j}+ \alpha
 \in \ZZ$, where $\alpha\in \ZZ$. Thus the number of degrees equal to $1$ are multiple of $k_j$.

Similarly,  ${d_0}{k^{-1}_t} \in \ZZ$ implies $ ({\sum\limits^{r-1}_{i=0}k_i}){k^{-1}_t}
=({\sum\limits^{}_{k_i<k_s}k_i+\sum\limits^{}_{k_i=k_s}k_i
+\sum\limits^{}_{k_i> k_s}k_i}){k^{-1}_t}
=(\beta_1 k_s +\sum\limits^{}_{k_i=k_s}k_i){k^{-1}_t}+ \gamma
 \in \ZZ$, where $\gamma\in \ZZ$. Thus $\beta_1k_s+\beta_2k_s$ is divisible by $k_t$.
\end{proof}

%The above proposition is useful to recognize some $C$-algebras not arising from Fourier matrices just by looking at the degree pattern (or first row of the character table) of a $C$-algebra.
For example, the character tables of association schemes {\tt as7(2), as8(5), as8(6), as9(8), as9(9)} etcetera, all the character tables of adjacency algebras of association schemes of rank $2$ and order greater than $2$, and the character tables of all adjacency algebras of homogenous schemes violate the conditions of the above proposition so they are not character tables of the association schemes  arising from Allen matrices, for the character tables see \cite{HM}.

\begin{lemma}Let $(A,\BB, \delta)$ be a $C$-algebra arising from an integral Fourier matrix of odd rank and odd order with standard  basis $\BB=\{b_0,b_1,\hdots, b_{r-1}\}$. Let the odd degree among all the degrees of $A$ be maximum. Then the rank of $A$ must be at least $10$.
\end{lemma}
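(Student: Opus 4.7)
The plan is to combine the arithmetic forced by Lemma~\ref{SqaureDegreesForIntegralFurierCaseLemma} (each degree is a perfect square) with the divisibility coming from Proposition~\ref{SymmetrizingPropertyProp} (each degree divides the order $n$), and then squeeze the admissible odd ranks by a single congruence modulo $8$.

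First I would record that every degree $k_i = \delta(b_i)$ is a perfect square by Lemma~\ref{SqaureDegreesForIntegralFurierCaseLemma}, and that $k_i \mid n$, where $n = 1 + \sum_{i \geq 1} k_i$ is the order (Proposition~\ref{SymmetrizingPropertyProp}). Because $n$ is odd, every $k_i$ is an odd square, and hence $k_i \equiv 1 \pmod{8}$.

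Next, let $m = \max_i k_i$; the hypothesis that a nontrivial odd degree attains the maximum forces $m > 1$. Since $m \mid n$ and $m < n$, write $n = km$ with $k \geq 2$; as $n$ and $m$ are both odd, $k$ is odd, so $k \geq 3$. On the other hand $n \leq 1 + (r-1)m$ gives $k \leq r-1+1/m < r$, hence $k \leq r-1$. Reducing the identity $n = 1 + \sum_{i \geq 1} k_i$ modulo $8$ yields $n \equiv r \pmod{8}$, while $n = km \equiv k \pmod{8}$ since $m \equiv 1 \pmod{8}$. Therefore $k \equiv r \pmod{8}$.

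Finally, for each $r \in \{1,3,5,7,9\}$ I would verify that no permissible $k$ exists: $r=1$ forces $m=1$, contradicting $m>1$; $r=3$ demands $3 \leq k \leq 2$, impossible; and for $r = 5, 7, 9$ the odd integers $k$ with $3 \leq k \leq r-1$ are $\{3\}$, $\{3,5\}$, $\{3,5,7\}$ respectively, none of which is congruent to $5$, $7$, or $1$ modulo $8$. Hence $r \geq 11 \geq 10$. The only nonroutine observation is that the mod-$8$ congruence (using that odd squares are all $\equiv 1 \pmod{8}$) sharpens the crude bound $k \leq r-1$ enough to kill every odd rank below $11$; the rest is arithmetic bookkeeping already packaged in the cited lemma and proposition.
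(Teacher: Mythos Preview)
Your argument is correct, and it takes a genuinely different route from the paper's own proof. The paper invokes an external fact, \cite[Lemma 3.7]{MC1}, asserting that the order $d_0$ is itself a perfect square; since each $\delta(b_i)$ is also a square and divides $d_0$, the quotient $a_i=d_0/\delta(b_i)$ is an odd square strictly greater than $1$, hence $a_i\geq 9$. Taking $\delta(b_1)$ maximal, the single chain of inequalities $9\delta(b_1)\leq d_0\leq 1+(r-1)\delta(b_1)$ then forces $r\geq 10$ in one line. Your proof sidesteps the citation to \cite{MC1} entirely: instead of knowing that $d_0$ is a square, you only use that each $k_i$ is an odd square, hence $\equiv 1\pmod 8$, and extract the congruence $k\equiv r\pmod 8$ for the cofactor $k=n/m$. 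The trade-off is that the paper's argument is a two-line inequality once the cited lemma is in hand, whereas yours is self-contained within the paper's own results (Lemma~\ref{SqaureDegreesForIntegralFurierCaseLemma} and Proposition~\ref{SymmetrizingPropertyProp}) but requires a short case check for $r\in\{3,5,7,9\}$. Both approaches read the hypothesis as guaranteeing that the maximal degree exceeds $1$, and both actually yield $r\geq 11$ once the odd-rank assumption is applied.
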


\begin{proof}
Let $d_0=\delta(b_i)a_i$. By \cite[Lemma 3.7]{MC1}, $d_0$ is an odd square, thus $a_i$ is a square.
Let $\delta(b_1)$ be an odd integer and $\delta(b_1)\geq \delta(b_i)$ for each $i$. Therefore, $d_0\geq 9\delta(b_1)$, and $ d_0\leq  1+(r-1)\delta(b_1)$. Thus $ 9\delta(b_1)\leq 1+(r-1)\delta(b_1)$ implies  $\delta(b_1)(9-(r-1))\leq 1$ implies $\delta(b_1) \in \ZZ^+$ only if $r-1\geq9$.
\end{proof}

\section{Non-homogeneous $C$-algebras of rank $2$ to $5$}

\medskip

In this section we classify the non-homogenous Allen matrices and non-homogenous Fourier matrices of rank $2$ to $5$. In fact, we find the first eigenmatrices, the character tables, because the associated Allen matrices and  Fourier matrices can be recovered by scaling the columns of first eigenmatrices and then rows of the Allen matrices, respectively, see Section 2. For the classification of homogenous Allen matrices and homogenous Fourier matrices  see \cite{G1}.
%Readers interested in the classification of homogenous Allen matrices and homogenous Fourier matrices are referred to \cite{G1}.
All the theorems, propositions and  methods developed in this section can be used and extended for the classification of the Allen matrices and Fourier matrices of higher ranks.

\smallskip

In the next theorem, we prove that any row or column permutation of first eigenmatrix $P$ will lead to the simultaneous row and column permutation of $P$. This nice theorem is useful for the classification of the Allen matrices and Fourier matrices because we do not need to consider all the row and column permutations of a given first eigenmatrix $P$ of a $C$-algebra that we assume is arising from an Allen matrix. % and this nice theorem is a consequence of the Proposition \ref{SymmetrizingPropertyProp}.
Note: if a $C$-algebra is arising from an Allen matrix then it is self dual, thus we can always get a character table $P$ of the $C$-algebra such that $P\bar P=nI$, where $n$ is the order of the algebra and $I$ is the identity matrix.

\begin{thm}\label{SimulataneousPermThm} Let $(A,\BB, \delta)$ be a $C$-algebra of rank $r$. Let $n$ be the order of the algebra and $P$ be the character table  such that $P\bar P=nI$.
\begin{enumerate}
  \item Let $Q$ be the matrix obtained from $P$ by simultaneous row  and column permutations. Then $Q\bar Q=nI$.
\item Let $S$ be a symmetric and unitary matrix. Then any simultaneous row and column permutation of $S$ is again a symmetric and unitary matrix.
  \item %Let $(A,\BB, \delta)$ be a $C$-algebra arising from an Allen matrix $s$.
      Let the $C$-algebra be arising from an Allen matrix $s$.
      Let $Q$ be the character table of $A$ not necessarily obtained by scaling the columns of the Allen matrix $s$. If we consider it to be obtained from the Allen matrix $s$ then $Q\bar Q=nI$. That is, any row or column permutation of $P$ will lead to the simultaneous row and column permutation of $P$.
       %matrix obtained from $P$ by rows or columns permutation. Then $Q\bar Q=nI$.
\end{enumerate}
\end{thm}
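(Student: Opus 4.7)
The argument splits into the three parts. Parts (i) and (ii) are direct matrix computations using only that a permutation matrix $\Pi$ is real and orthogonal ($\bar\Pi = \Pi$, $\Pi^T\Pi = I$); part (iii) is the substantive claim and appeals to the self-dual structure of a $C$-algebra arising from an Allen matrix.

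For part (i), I would write a simultaneous row-and-column permutation as $Q = \Pi P \Pi^T$. Then
$$Q\bar Q \;=\; \Pi P\Pi^T \cdot \Pi \bar P \Pi^T \;=\; \Pi (P\bar P)\Pi^T \;=\; \Pi(nI)\Pi^T \;=\; nI.$$
Part (ii) is analogous with $T = \Pi S\Pi^T$: symmetry follows from $T^T = \Pi S^T\Pi^T = T$ (since $S^T = S$), and unitarity from $T\bar T^T = \Pi S\bar S^T\Pi^T = \Pi I \Pi^T = I$ (since $S\bar S^T = I$).

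Part (iii) contains two pieces. The first---that if $Q$ is considered as arising from the Allen matrix $s$ through the column-scaling construction, then $Q\bar Q = nI$---is just the self-duality of a $C$-algebra arising from an Allen matrix (Theorem~\ref{IntegralCAlgebrasAndFourierMatrices}) applied in the same way as the analogous property for $P$. The second piece is the forcing assertion: any row or column permutation relating $Q$ to $P$ must be accompanied by the same permutation on the other axis. My plan here is to write $Q = \Pi_R P \Pi_C$ with permutation matrices $\Pi_R, \Pi_C$, combine $P\bar P = Q\bar Q = nI$ with $\bar P = nP^{-1}$ to extract the commutation relation $P(\Pi_C\Pi_R)P^{-1} = (\Pi_C\Pi_R)^{-1}$, and then use the self-dual pairing $b_i \leftrightarrow \chi_i$ furnished by the Allen-matrix construction, together with the fixed normalizations $p_{i0} = 1$ and $p_{0j} = \delta(b_j)$, to conclude that $\Pi_C\Pi_R = I$, i.e.\ $\Pi_R = \Pi_C^T$. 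Part (i) then gives $Q\bar Q = nI$ automatically.

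The main obstacle is the final algebraic step of part (iii): rigorously deducing $\Pi_C\Pi_R = I$ from the commutation relation together with the first-row/first-column normalization. Conceptually this is just the observation that self-duality pins down a canonical bijection between $\BB$ and $\mathrm{Irr}(A)$, so relabeling one side automatically relabels the other; but turning this into a purely algebraic deduction will likely require first observing that any admissible permutation must fix the index $0$ (because the first column of $P$ is $(1,\ldots,1)^T$ and the first row records the degrees), and then using the nondegeneracy of $P$ together with the commutation relation to pin down the remainder of the permutation.
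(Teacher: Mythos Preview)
Your treatment of parts (i) and (ii) via conjugation by a permutation matrix is exactly the paper's argument rewritten in matrix notation, and is fine.

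For part (iii) your route diverges from the paper's, and the detour you take is the part you correctly flag as the obstacle. The paper does not pass through the commutation relation $P(\Pi_C\Pi_R)P^{-1}=(\Pi_C\Pi_R)^{-1}$ at all. Instead it argues directly from Proposition~\ref{SymmetrizingPropertyProp}: a row permutation of $P$ induces the same row permutation of the Allen matrix $s$, hence permutes the norms $d_i$; but $d_0/d_i=m_i$ and, crucially, $m_i=\delta(b_i)$ \emph{with matching indices} (part~(ii) of that proposition), so the column carrying degree $\delta(b_i)$ is forced to move exactly where row $i$ moved. This delivers the simultaneous permutation in one stroke, after which part~(i) gives $Q\bar Q=nI$. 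Your plan instead first asserts $Q\bar Q=nI$ from self-duality, then tries to recover the permutation constraint from the resulting matrix identity; but the commutation relation by itself does not force $\Pi_C\Pi_R=I$ (think of repeated degrees, or character tables with extra symmetries), and the additional input you say you need---the canonical bijection $b_i\leftrightarrow\chi_i$---\emph{is} precisely Proposition~\ref{SymmetrizingPropertyProp}(ii). So you end up needing the same ingredient the paper uses, only after an unnecessary algebraic excursion. Dropping the commutation-relation step and invoking Proposition~\ref{SymmetrizingPropertyProp} directly (rather than Theorem~\ref{IntegralCAlgebrasAndFourierMatrices}, which concerns the converse direction) would align your argument with the paper's and close the gap you identified.
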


\begin{proof}
For $(i)$. Let $P=[p_{ij}]$ be the character table of $A$. Since $P\bar P=nI$, for all $0\leq i, j\leq r-1$, $\sum p_{ij}\bar p_{ji}=n$ and $\sum p_{ij}\bar p_{jk}=0$, $k\neq i$. Let $Q=[q_{ij}]$ be the matrix obtained from the matrix $P$ after simultaneous permutation of any number of rows and columns of the matrix $P$.
%Also for the matrix $P$, the rows and their corresponding columns are permuted simultaneously and call the result matrix $Q=[q_{ij}]$.
Therefore, for all $0\leq i, j\leq r-1$, $\sum q_{ij}\bar q_{ji}=\sum p_{ij}\bar p_{ji}=n$ and $\sum q_{ij}\bar q_{jk}=\sum p_{ij}\bar p_{jk}=0$, $k\neq i$. Hence  $Q\bar Q=nI$.

\smallskip

For $(ii)$. Proof is similar to the part $(i)$ above.

\smallskip

For $(iii)$. Here we consider the case of row permutations and the proof for column permutations is similar to the proof for row permutations. Let $P$ be the character table of a $C$-algebra arising from an Allen matrix $s$,  such that $P\bar P=nI$. Let the rows $i_1,i_2, \hdots, i_l$ moves to the positions of the rows $j_1,j_2, \hdots, j_l$. Let $s$ be the Allen matrix obtained from $P$ by scaling the columns with the square roots of the degrees. Thus the row $i_k$ of $s$ moves to the row $j_k$. By Proposition \ref{SymmetrizingPropertyProp}, $d_0m_i=d_i$, where $m_i (=\delta(b_i))$ is the multiplicity of $A$ and this fact will force column $i_k$ to move to column $j_k$. Hence the assertion is true by part $(i)$.
\end{proof}

The next theorem classifies the Allen matrices and Fourier matrices of rank $2$.

\begin{thm}\label{RankTwoThm}
Let $(A,\BB, \delta)$ be a $C$-algebra of rank $2$ arising from an Allen matrix $s$. Then the following are the unique expressions for the Fourier matrix $S$ and the Allen matrix $s$ (integral Fourier matrix).
$$S=\dfrac{1}{\sqrt2}\left[
       \begin{array}{cc}
         1 & 1 \\
         1 & -1 \\
       \end{array}
     \right] \mbox{ and } s=\left[
       \begin{array}{cc}
         1 & 1 \\
         1 & -1 \\
       \end{array}
     \right]$$
\end{thm}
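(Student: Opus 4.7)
The plan is to parameterize a general rank $2$ Allen matrix and then use two integrality constraints to pin everything down.

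First I would write
\[
s = \begin{pmatrix} 1 & s_{01} \\ 1 & s_{11} \end{pmatrix},
\]
since by definition an Allen matrix has $s_{i0} = 1$ for all $i$. The Fourier axiom $S_{i0} > 0$ combined with $S = S^T$ gives $S_{0i} > 0$, hence $s_{0i} = S_{0i}/S_{00}$ is a positive real number. So $s_{01} > 0$ is real.

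Next, the orthogonality relation $s\bar{s}^T = \mathrm{diag}(d_0,d_1)$ (see the discussion following the definition of an Allen matrix) forces the off-diagonal entry of $s \bar s^T$ to vanish:
\[
1 + s_{01}\bar{s}_{11} = 0,
\]
so $\bar{s}_{11} = -1/s_{01}$, which (together with $s_{01}\in\RR^+$) makes $s_{11}$ real and equal to $-1/s_{01}$. Writing $k_1 := \delta(b_1)$, the construction of $\delta$ from the proof of Theorem \ref{CAlgebraThm} gives $\delta(b_1) = s_{01}^{\,2}$, and Proposition \ref{IntegralNormsAndDegreesProp} tells us $k_1 \in \ZZ^+$. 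Therefore $s_{01} = \sqrt{k_1}$ and $s_{11} = -1/\sqrt{k_1}$.

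Now I would compute the non-principal norm
\[
d_1 \;=\; 1 + |s_{11}|^2 \;=\; 1 + \tfrac{1}{k_1} \;=\; \tfrac{k_1+1}{k_1}.
\]
By Proposition \ref{IntegralNormsAndDegreesProp} the norm $d_1$ is a positive integer, which forces $k_1 \mid 1$ and hence $k_1 = 1$. Consequently $s_{01} = 1$ and $s_{11} = -1$, giving the stated Allen matrix uniquely. Finally, from $S_{ij} = s_{ij}/\sqrt{d_i}$ with $d_0 = d_1 = 2$, one recovers the stated Fourier matrix $S = \frac{1}{\sqrt{2}} s$.

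I do not expect any real obstacle here: the argument is essentially that the only nontrivial degree compatible with integer norms in rank $2$ is $k_1 = 1$. The most delicate point to state carefully is just that $s_{01}$ is \emph{real} positive (not merely of positive square), which is what lets us pass freely between $s_{01}$ and $\sqrt{k_1}$ and which is the reason the diagonal matrix $s\bar s^T$ determines $s_{11}$ uniquely rather than only up to a phase.
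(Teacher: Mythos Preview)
Your proof is correct and reaches the same parameterization $s=\begin{pmatrix}1&\sqrt{k}\\1&-1/\sqrt{k}\end{pmatrix}$ as the paper, but you finish with a different integrality constraint. The paper computes the structure constant $N_{111}=(k-1)/\sqrt{k}$ and uses axiom (v) of the Fourier matrix definition directly to force $k=1$; you instead compute the norm $d_1=(k+1)/k$ and invoke Proposition \ref{IntegralNormsAndDegreesProp} to get $k\mid 1$. Your route is arguably cleaner since it avoids computing any structure constant, at the cost of relying on a proposition proved elsewhere rather than on the bare axioms. Either way the argument is short and sound.
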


\begin{proof}
Let  $P$ be character table for the $C$-algebra arising from an Allen matrix $s$. Therefore,
$$P=\left[
       \begin{array}{cc}
         1 & k \\
         1 & -1 \\
       \end{array}
     \right],~ s = \left[
       \begin{array}{cc}
         1 & \sqrt{k} \\
         1 & -1/\sqrt{k} \\
       \end{array}
     \right] \mbox{ and } S= \dfrac1{\sqrt{1+k}}\left[
       \begin{array}{cc}
         1 &  \sqrt{k}  \\
          \sqrt{k}  & -1 \\
       \end{array}
     \right].$$
Since $N_{111} =({k-1})/{\sqrt{k}}$ is an integer, $k=1$.
\end{proof}

The above theorem also confirms the decision of the results of the previoous section that conclude that an association scheme of rank $2$ and order greater than $2$ cannot be the association scheme associated to an Allen matrix.

Let $(S,T)$ be a modular datum. Then $(S,\zeta_3T)$ is also a modular datum. Hence, in a modular datum, for a given Fourier matrix there might be at least three different associated torsion  diagonal matrices.
For the Fourier matrix $S=\dfrac{1}{\sqrt2}\left[
       \begin{array}{cc}
         1 & 1 \\
         1 & -1 \\
       \end{array}
     \right]$ and integral Fourier matrix $s=\left[
       \begin{array}{cc}
         1 & 1 \\
         1 & -1 \\
       \end{array}
     \right]$ of the above theorem there exists a torsion matrix $T=$diag$ (\xi ,\mu)$, so that $(S,T)$ and $(s,T)$ are Modular datum and integral Modular datum respectively, see \cite[Example 1]{MC1}. For the complete list of the diagonal matrices associated with the Fourier matrix $S$ see \cite[Example 36]{G1}.

%By Proposition \ref{SymmetrizingPropertyProp} and Theorem \ref{p3}, there is only one (homogenous) integral $C$-algebra of rank $3$ with all degrees equal to $1$.

\medskip

%Now we classify the Allen matrices (Fourier matrices) of rank $3$.
Let $P$ be the character table for a symmetric $C$-algebra arising from an Allen matrix $s$ of rank $3$ with standard basis $\BB=\{b_0,b_1,b_2\}$. Let $b_1b_2= ub_1+vb_2$.
Then
$$P=\left[
   \begin{array}{ccc}
     1 & k & l \\
     1 & \phi_1 & \phi_2\\
     1 & \psi_1 & \psi_2 \\
   \end{array}
 \right],
$$ where
$\phi_1= ({v-u-1+\sqrt{(u-v-1)^2+4u}})/{2}$,
$\quad \phi_2= ({u-v-1-\sqrt{(u-v-1)^2+4u}})/{2}$,

$\psi_1=({v-u-1-\sqrt{(u-v-1)^2+4u}})/{2}$ and
$\quad \psi_2= ({u-v-1+\sqrt{(u-v-1)^2+4u}})/{2}$.

\noindent Therefore, $$s=\left[
   \begin{array}{ccc}
     1 & \sqrt{k} & \sqrt{l} \\
     1 & \dfrac{\phi_1}{\sqrt{k}} & \dfrac{\phi_2}{\sqrt{l}}\\
     1 & \dfrac{\psi_1}{\sqrt{k}} & \dfrac{\psi_2}{\sqrt{l}} \\
   \end{array}
 \right].% \mbox { and }ss^T= \mbox { diag }(d_0,d_1,d_2),
 $$
Thus
$d_0= 1+k+l=n$, $d_1=1+\dfrac{|\phi_1|^2}{k}+\dfrac{|\phi_2|^2}{l}=\dfrac{n}{k}$,
$d_2=1+\dfrac{|\psi_1|^2}{k}+\dfrac{|\psi_2|^2}{l}=\dfrac{n}{l}$.

\begin{thm}\label{t2}
There is no symmetric homogenous $C$-algebra of rank 3 arising from an Allen matrix.
\end{thm}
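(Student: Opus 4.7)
The plan is to argue by contradiction. Assume $(A,\BB,\delta)$ is a symmetric homogeneous rank-$3$ $C$-algebra with homogeneity degree $k$ arising from an Allen matrix $s$. Specializing the general rank-$3$ formulas displayed immediately above the theorem to the homogeneous case sets $l=k$, so $n=1+2k$ and by Proposition \ref{SymmetrizingPropertyProp} both non-principal norms equal $d_1=d_2=n/k=(1+2k)/k$. Proposition \ref{IntegralNormsAndDegreesProp} requires $d_1\in\ZZ$, forcing $k\mid 1$ and hence $k=1$.

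With $k=1$ every degree is $1$. The structure-constant identity $\lambda_{ijk}=N_{ijk}\sqrt{\delta(b_i)\delta(b_j)/\delta(b_k)}$ recorded in the construction of the $C$-algebra then collapses to $\lambda_{ijk}=N_{ijk}\in\ZZ$, so left-multiplication by any basis element acts on $\BB$ by an integer matrix and each entry $\phi_1,\phi_2,\psi_1,\psi_2$ of $P$ is an algebraic integer; symmetry of $A$ makes them real. Next I invoke unitarity of the Fourier matrix: $S\bar S^T=I$ combined with $S_{ij}=s_{ij}/\sqrt{d_i}$ and $d_0=d_1=d_2=3$ yields $\sum_i s_{ij}\overline{s_{ik}}=3\delta_{jk}$ for the real matrix $s$. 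Specializing to $(j,k)=(0,1)$ and $(j,k)=(1,1)$ gives
\[
\phi_1+\psi_1=-1,\qquad \phi_1^2+\psi_1^2=2.
\]

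These two equations force $\phi_1\psi_1=-\tfrac12$, so $\phi_1$ and $\psi_1$ are the roots $(-1\pm\sqrt3)/2$ of $2t^2+2t-1=0$. This polynomial is not monic over $\ZZ$, so neither root is an algebraic integer, contradicting the previous paragraph. The only delicate step in the plan is the algebraic-integer claim for the character values; the reduction to $k=1$, the translation of column orthogonality from $S$ to $s$, and the final rational arithmetic are all routine manipulations of the formulas already set up in Section 2.
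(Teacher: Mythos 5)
Your proof is correct, but it departs from the paper's argument at both key steps, in ways worth noting. To force $k=1$, the paper invokes the external result \cite[Theorem 25]{G1} ("homogeneous implies all degrees $1$"), whereas you get it self-containedly from Proposition \ref{SymmetrizingPropertyProp} together with Proposition \ref{IntegralNormsAndDegreesProp}: $d_1=(1+2k)/k\in\ZZ$ forces $k\mid 1$. For the contradiction at $k=1$, the paper works from its parametrization $b_1b_2=ub_1+vb_2$ (symmetry gives $u=v$, $N_{210}=0$ gives $k=2u$) and checks that the specific structure constants $N_{112}=\tfrac12$ and $N_{222}=-\tfrac12$ violate axiom (v) of a Fourier matrix; you instead pin down the character values by column orthogonality ($\phi_1+\psi_1=-1$, $\phi_1^2+\psi_1^2=2$, so $\phi_1,\psi_1$ are roots of $2t^2+2t-1$) and contradict algebraic integrality of the entries. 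These are two facets of the same obstruction --- the paper's formula $\phi_1=(-1+\sqrt{1+2k})/2$ at $k=1$ is exactly your $(-1+\sqrt3)/2$ --- but your route avoids computing any structure constants explicitly and only uses unitarity, symmetry, and integrality facts already recorded in Section 2. One small patch: "the polynomial is not monic over $\ZZ$, so its roots are not algebraic integers" is not a valid inference in general; you need that $2t^2+2t-1$ is irreducible and primitive (it is, having no rational roots), or argue directly that $(-1\pm\sqrt3)/2\notin\ZZ[\sqrt3]$, which is the full ring of integers of $\QQ(\sqrt3)$ since $3\not\equiv 1 \pmod 4$. Also note that at $k=1$ you could shortcut the algebraic-integrality claim: the paper already records that Allen-matrix entries are algebraic integers, and with all degrees $1$ one has $P=s$, so no appeal to integer multiplication matrices is needed.
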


\begin{proof} Let $(A,\BB, \delta)$ be a symmetric homogenous $C$-algebra arising from an Allen matrix $s$. Since $S^T=S$, $ \phi_2k= \psi_1l$ implies $u=v$. The structure constant $N_{210}=0$ implies $k=2u$. Thus  $\phi_1=({-1+\sqrt{1+2k}})/{2}$ and $\phi_2=({-1-\sqrt{1+2k}})/{2}$.
Since $A$ is homogeneous, all the degrees are equal to $1$, see \cite[Thorem 25]{G1}.
 %The structure constants are given by
%$N_{ijk}=\sum\limits^{}_{l}{S_{li}S_{lj}\bar S_{lk}}{S^{-1}_{l0}}$.
But the structure constants
$$N_{112}=\dfrac{1}{(2k+1)\sqrt{k}}[k^2+ \dfrac{k}{2}] \mbox{ and } N_{222}
=\dfrac{1}{(2k+1)\sqrt{k}}[k^2-1-\dfrac32k]$$
are not integers for $k=1$.
%But they  cannot be simultaneously integers, hence the proof.
\end{proof}

\begin{thm}\label{p3}
Let $(A,\BB, \delta)$ be a symmetric $C$-algebra of rank $3$ arising from an Allen matrix $s$. Then the only possible degree pattern of $A$ is $[1,1,2]$ up to permutations and the corresponding matrices $P$, $s$ and $S$ are as follows.
$$P=\left[
   \begin{array}{ccc}
     1 & 1 & 2 \\
     1 & 1 & -2\\
     1 & -1 & 0 \\
   \end{array}
 \right],~~
s=\left[
   \begin{array}{ccc}
     1 & 1 & \sqrt2 \\
     1 & 1 & -\sqrt2\\
     1 & -1 & 0 \\
   \end{array}
 \right] \mbox{ and } S=\left[
   \begin{array}{ccc}
     1/2 & 1/2 & 1/\sqrt2 \\
     1/2 & 1/2 & -1/\sqrt2\\
     1/\sqrt2 & -1/\sqrt2 & 0 \\
   \end{array}
 \right].
$$
\end{thm}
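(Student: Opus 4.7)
My plan has three stages: reduce to finitely many degree patterns by elementary divisibility, solve the orthogonality and self-duality system in each case, and eliminate spurious solutions via the algebraic-integrality of Allen-matrix entries noted in Section~2.

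First, Theorem~\ref{t2} rules out the homogeneous case, so the two nontrivial degrees $k=\delta(b_1)$ and $l=\delta(b_2)$ must differ, and by Theorem~\ref{SimulataneousPermThm} I may relabel so that $k<l$. They are positive integers by Proposition~\ref{IntegralNormsAndDegreesProp}, and by Proposition~\ref{SymmetrizingPropertyProp} each divides $n=1+k+l$; equivalently $k\mid (1+l)$ and $l\mid (1+k)$. Because $1+k<2l$, the second divisibility forces $l=k+1$, and then $k\mid (k+2)$ gives $k\in\{1,2\}$. Only the patterns $[1,1,2]$ (with $n=4$) and $[1,2,3]$ (with $n=6$) survive.

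To dispose of $[1,2,3]$, I would use the generic parametrization of $P$ recorded just before Theorem~\ref{t2}, together with the self-duality condition $\phi_2 k=\psi_1 l$ derived inside that proof from $S=S^T$. Writing $\phi_2=3t$ and $\psi_1=2t$, row-orthogonality with the trivial character gives $\phi_1=-1-3t$ and $\psi_2=-1-2t$, and the self-orthogonality of row~$1$ collapses to $5t^2+2t-1=0$, with irrational roots $t=(-1\pm\sqrt 6)/5$ (the other two orthogonality relations are automatically satisfied using $5t^2=1-2t$). The resulting Allen-matrix entry $s_{11}=\phi_1/\sqrt 2=-(\sqrt 2+3\sqrt 3)/5$ (for one sign, and its conjugate for the other) is annihilated by $25y^4-58y^2+25$, which is irreducible over $\QQ$; its monic form $y^4-\tfrac{58}{25}y^2+1$ has non-integer coefficients, so $s_{11}$ is not an algebraic integer, contradicting the property recalled in Section~2. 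Hence $[1,2,3]$ is excluded.

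For $[1,1,2]$ the self-duality becomes $\phi_2=2\psi_1$, and combined with $1+\phi_1+\phi_2=0$, $1+\psi_1+\psi_2=0$ and the self-orthogonality of row~$2$ it reduces to $3\psi_1^2+2\psi_1-1=0$, whence $\psi_1\in\{-1,\,1/3\}$. The root $\psi_1=1/3$ places $1/3$ directly into $s$ (since $\sqrt{\delta(b_1)}=1$ and $s_{21}=\psi_1$), which is rational but not an integer and therefore not an algebraic integer, impossible. Thus $\psi_1=-1$, giving $\phi_1=1,\ \phi_2=-2,\ \psi_2=0$ and the matrix $P$ stated in the theorem; $s$ and $S$ are then obtained from $P$ by the scalings $s_{ij}=p_{ij}/\sqrt{p_{0j}}$ and $S_{ij}=s_{ij}/\sqrt{d_i}$ of Section~2, yielding the claimed expressions. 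The main obstacle is Case~2: one has to uncover the clean quadratic $5t^2+2t-1=0$ and then realize that the contradiction comes from the algebraic-integer test applied to the $s$-entries (or equivalently to the rescaled $P$-entries), since neither the mere existence of real solutions nor the orthogonality relations by themselves rule out this degree pattern.
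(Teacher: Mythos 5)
Your proposal is correct, and while it follows the paper's overall skeleton (positivity and divisibility of the degrees plus Theorem \ref{t2} reduce the degree pattern to $[1,1,2]$ or $[1,2,3]$, then a case analysis), the way you settle the two cases is genuinely different. The paper parametrizes by the structure constants $u,v$ of $b_1b_2=ub_1+vb_2$: in the $[1,1,2]$ case it gets $u^3(u-1)=0$ and kills the spurious solution $(u,v)=(1,1/2)$ by the mismatch $d_0\neq d_1$ despite $m_0=m_1$, and in the $[1,2,3]$ case it derives the quartic $625u^4-1850u^3+2520u^2-1296u+243=0$ and discards the case because that quartic is claimed to have no real roots (Allen matrices of symmetric $C$-algebras being real). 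You instead parametrize the character values directly, impose zero row sums, the symmetry relation $k\phi_2=l\psi_1$, and one self-orthogonality relation with multiplicity equal to degree, collapsing each case to a quadratic ($3\psi_1^2+2\psi_1-1=0$, resp.\ $5t^2+2t-1=0$), and you eliminate the spurious roots by the algebraic-integrality of the Allen-matrix entries ($1/3$ in one case, $\pm(\sqrt2+3\sqrt3)/5$ or its conjugate in the other, whose minimal polynomial $y^4-\tfrac{58}{25}y^2+1$ is irreducible and not in $\ZZ[y]$). I checked the computations: the quadratics, the automatic satisfaction of the remaining orthogonality relations via $5t^2=1-2t$, and the minimal-polynomial argument are all correct, and every condition you impose is a genuine necessary condition for arising from an Allen matrix (degrees equal multiplicities with matching indices, $d_i=d_0/\delta(b_i)$, $S=S^T$, zero row sums), so both eliminations are sound. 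What your route buys is that the exclusion of $[1,2,3]$ does not hinge on the no-real-roots claim for the paper's quartic: your candidate with $t=(-1\pm\sqrt6)/5$ is real and passes the orthogonality and symmetry constraints, so in this case it is the integrality test on the entries of $s$ that does the essential work, which makes your argument more transparent and easier to verify; the paper's route, in turn, stays entirely inside the structure-constant data $(u,v)$ and disposes of the spurious $[1,1,2]$ solution without any integrality input. One small addition for completeness (which the paper also omits): a one-line check that the displayed $S$ is indeed unitary and symmetric with integral structure constants, e.g.\ $\tilde b_2^2=\tilde b_0+\tilde b_1$ and $\tilde b_1\tilde b_2=\tilde b_2$, so the surviving matrix really is an Allen matrix.
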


\begin{proof}
Let $1$, $k$ and $l$ be the degrees of $A$. Therefore, by Proposition \ref{SymmetrizingPropertyProp}, $k$ divides $1+l$, and $l$ divides $1+k$. By Theorem \ref{t2}, $A$ cannot be homogenous, thus the only possible  degree pattern of $A$ are $[1,1,2]$ and $[1,2,3]$, up to permutations.
Since $N_{012}=0$, $1-\dfrac{v}{k}-\dfrac{u}{l}=0$. Therefore, the degree patterns $[1,1,2]$ and $[1,2,3]$ imply $v=1-\dfrac{u}{2}$ and $v=2-\dfrac{2u}{3}$, respectively.

\smallskip

Case $1$. Let the degree pattern be $[1,1,2]$, that is, $k=1$ and $l=2$.

Therefore, $v=1-\dfrac{u}2 $ implies
$$\phi_1=\big({-\dfrac{3u}2+\sqrt{(\dfrac{3u}2-2)^2+4u}}\big)/{2} \mbox{ and } \psi_1=({-\dfrac{3u}2-\sqrt{\big(\dfrac{3u}2-2)^2+4u}}\big)/{2}.$$
Since $N_{011}=1$, $u^3(u-1)=0$. Thus $u=0$, or  $u=1$. Hence $(u,v)=(0,1)$, or $(u,v)=(1,1/2)$.

\smallskip

Subcase 1. If $(u,v)=(0,1)$.

Then $\phi_1=1$, $\psi_1=-1$, $\phi_2=-2$ and $\psi_2=0$. Therefore,
$$P=\left[
   \begin{array}{ccc}
     1 & 1 & 2 \\
     1 & 1 & -2\\
     1 & -1 & 0 \\
   \end{array}
 \right],
s=\left[
   \begin{array}{ccc}
     1 & 1 & \sqrt2 \\
     1 & 1 & -\sqrt2\\
     1 & -1 & 0 \\
   \end{array}
 \right] \mbox{ and } S=\left[
   \begin{array}{ccc}
     1/2 & 1/2 & 1/\sqrt2 \\
     1/2 & 1/2 & -1/\sqrt2\\
     1/\sqrt2 & -1/\sqrt2 & 0 \\
   \end{array}
 \right].
$$

Subcase 2. If $(u,v)=(1,\dfrac12)$.

Then $\phi_1=\dfrac{-3+\sqrt{17}}{4}$, $\psi_1=\dfrac{-3-\sqrt{17}}{4}$, $\phi_2=\dfrac{-1-\sqrt{17}}{4}$ and $\psi_2=\dfrac{-1+\sqrt{17}}{4}$.
Therefore,
$$P=\left[
   \begin{array}{ccc}
     1 & 1 & 2 \\
     1 & \dfrac{-3+\sqrt{17}}{4} & \dfrac{-1-\sqrt{17}}{4}\\
     1 & \dfrac{-3-\sqrt{17}}{4} & \dfrac{-1+\sqrt{17}}{4} \\
   \end{array}
 \right]\mbox{ and } s=\left[
   \begin{array}{ccc}
    1 & 1 & \sqrt2 \\
     1 & \dfrac{-3+\sqrt{17}}{4} & \dfrac{-1-\sqrt{17}}{4\sqrt2}\\
     1 & \dfrac{-3-\sqrt{17}}{4} & \dfrac{-1+\sqrt{17}}{4\sqrt2} \\
   \end{array}
 \right].$$
Note that $m_0=m_1$, but $d_0=4$ and $d_1=1+\dfrac{1}{16}[39-3\sqrt{17}]$, which is a contradiction to the Proposition \ref{SymmetrizingPropertyProp}. Hence $u=1$ and $v=\dfrac12$ is not a possible case.

%Also, by symmetry, the  degree pattern $[1,2,1]$ produces $v=0$ and $u=1$. So the corresponding $P$ can be obtained by permutation of the columns $2$ and $3$ of the above $P$ matrix.

\medskip

Case $2$. Let the degree pattern be $[1,2,3]$, that is, $k=2$ and $l=3$.

\noindent Therefore, $v=2-\dfrac{2u}3 $, implies
$$\phi_1=\big({-\dfrac{5u}3+1+\sqrt{(\dfrac{5u}3-3)^2+4u}}\big)/{2} \mbox{ and } \psi_1=\big({-\dfrac{5u}3+1-\sqrt{(\dfrac{5u}3-3)^2+4u}}\big)/{2}.$$
Since $N_{011}=1$,
$625u^4-1850u^3+2520u^2-1296u+243=0.$ But it has no real roots, see \cite{EL}.
%
%%$$625x^4-1850x^3+2520x^2-1296x+243=0$$
%
%Let $a:=625;b:=-1850;c:=2520;d:=-1296;e:=243;$
%
%$\Delta:=256*a^3*e^3-192*a^2*b*d*e^2-128*a^2*c^2*e^2+144*a^2*c*d^2*e-27*a^2*d^4
%+144*a*b^2*c*e^2-6*a*b^2*d^2*e-80*a*b*c^2*d*e+18*a*b*c*d^3+16*a*c^4*e
%-4*a*c^3*d^2-27*b^4*e^2+18*b^3*c*d*e-4*b^3*d^3-4*b^2*c^3*e+b^2*c^2*d^2;$
%
%$P:=8*a*c-3*b^2; D:=64*a^3*e-16*a^2*c^2+16*a*b^2*c-16*a^2*b*d-3*b^4;$
%
%Thus all $\Delta$, $P$ and $D$ are greater than zero. Hence the above equation has all non-real roots, see Quartic function, Wikipedia, The Free Encyclopedia.
Thus we rule out $[1,2,3]$ degree pattern, because an Allen matrix associated with a symmetric $C$-algebra might be a real matrix, see \cite[Proposition 21]{G1}.
\end{proof}

\begin{remark}\rm
In fact, the character table $P$ above is the character table of an association scheme {\tt as4(2)}, see \cite{HM}. By Theorem \ref{p3} and  \cite[Proposition 21]{G1}, there is no  symmetric $C$-algebra of rank $3$ arising from an integral Allen matrix (integral Fourier matrix). Thus there is no integral Allen matrix (integral Fourier matrix) of rank $3$.
\end{remark}

%\begin{proof}
%By Theorem \ref{p3}, the only possible degrees pattern is $[1,1,2]$ upto permutations. By Theorem \ref{t2}, there is no homogenous $C$-algebra of rank three and there is no non-homogenous algebra of rank three as all the degrees need to be square, see Lemma \ref{SqaureDegreesForIntegralFurierCaseLemma}.
%\end{proof}

In the next theorem we prove that there only one asymmetric $C$-algebra of rank 3 arising from an Allen matrix. Moreover, the following theorem shows that for rank $3$ it is not necessary to assume $|s_{ij}|\leq s_{0j}$ for all $i,j$ to prove that the homogeneous $C$-algebra arising from an Allen matrix is a group algebra, see \cite[Theorem 25]{G1}.

\begin{thm}\label{t9}
Let $(A,\BB, \delta)$ be a asymmetric $C$-algebra arising from an Allen matrix $s$ of rank $3$. Then $s$ is the character table of the cyclic group of order $3$.
\end{thm}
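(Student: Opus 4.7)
The plan is to pin down the degree pattern first using the asymmetry hypothesis together with the divisibility consequences of Proposition \ref{SymmetrizingPropertyProp}, and then to recover the entire character table by direct orthogonality, bypassing the need to invoke \cite[Theorem 25]{G1}.

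\textbf{Step 1 (Degree pattern).} Since $A$ is asymmetric of rank $3$ and $b_0 = b_0^*$, the involution $*$ must swap the two remaining basis elements, so $b_1^* = b_2$. From axiom (vi) of the definition of a $C$-algebra we have $\delta(b_1) = \delta(b_2) =: k$. By Proposition \ref{IntegralNormsAndDegreesProp} the value $k$ is a positive integer, and by Proposition \ref{SymmetrizingPropertyProp} it divides the order $d_0 = 1 + 2k$. Hence $k \mid 1$, forcing $k = 1$.

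\textbf{Step 2 (Shape of the character table).} With degrees $(1,1,1)$ the first row of $P$ is $(1,1,1)$, so $s=P$, and the norms are $d_0 = d_1 = d_2 = 3$, giving $s\bar s^T = 3I$. Because $b_2 = b_1^*$, the identity $(S_j)^* = \bar S_j$ from the proof of Lemma 1 forces the column of $s$ indexed by $b_2$ to be the complex conjugate of the column indexed by $b_1$. Writing
\[
s = \begin{pmatrix} 1 & 1 & 1 \\ 1 & a & \bar a \\ 1 & b & \bar b \end{pmatrix},
\]
the orthogonality relations $s\bar s^T = 3I$ reduce to three scalar equations: $a + b = -1$ (from row $0$ versus row $1$), $a^2 + b^2 = -1$ (from row $1$ versus row $2$), and $|a|^2 + |b|^2 = 2$ (from row $1$ versus itself).

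\textbf{Step 3 (Solving and identification).} Combining $a+b=-1$ with $a^2+b^2=-1$ yields $ab = 1$, so $a$ and $b$ are the two roots of $x^2 + x + 1$, namely the primitive cube roots of unity $\zeta_3$ and $\bar\zeta_3$; the third relation $|a|^2 + |b|^2 = 2$ is automatic. Since $a \neq \bar a$, the asymmetry condition is satisfied, and the resulting matrix $s$ is (up to relabelling the two nontrivial basis elements) exactly the character table of the cyclic group of order $3$.

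The only mild obstacle is ensuring that the asymmetry pins down the columns as complex conjugates before invoking orthogonality; once that is in place the remainder is a direct quadratic computation, and in particular no appeal to the inequality $|s_{ij}| \le s_{0j}$ or to \cite[Theorem 25]{G1} is required.
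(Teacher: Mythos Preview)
Your argument is correct and follows a genuinely different route from the paper. The paper imports from \cite{HRB} the general character table of an asymmetric rank-$3$ $C$-algebra together with the structure constants $\lambda_{112}=(k+1)/2$ and $\lambda_{121}=(k-1)/2$, converts these to $N_{112}=(k+1)/(2\sqrt{k})$ and $N_{121}=(k-1)/(2\sqrt{k})$, and then forces $k=1$ via integrality of the $N_{ijk}$. You bypass both the external reference and the structure-constant computation: asymmetry gives $\delta(b_1)=\delta(b_2)=k$, and the divisibility $k\mid d_0=1+2k$ from Proposition~\ref{SymmetrizingPropertyProp} yields $k=1$ in one line; the remaining entries then fall out of orthogonality alone. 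Your version is more self-contained and arguably cleaner; the paper's version has the merit of showing directly how the defining integrality axiom on Allen matrices does the work.

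One minor slip worth fixing: the three scalar equations in Step~2 are the \emph{column} orthogonalities (equivalently $\bar s^{\,T}s=3I$, which follows from $s\bar s^{\,T}=3I$ once $s$ is invertible), not the row pairings you name. Row~$0$ against row~$1$ of your displayed matrix gives $1+a+\bar a=0$, not $1+a+b=0$. The equations themselves, and the quadratic solution in Step~3, are correct regardless.
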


\begin{proof}
The character table $P$ for asymmetric table algebra is as follows, see \cite[\S 4]{HRB}.
$$P=\left[
   \begin{array}{ccc}
     1 & k & k \\
      1 & \alpha & \bar\alpha \\
     1 & \bar\alpha & \alpha\\
    \end{array}
 \right],$$
where $\alpha={(-1+i\sqrt{1+2k})}/{2}.$
By \cite[Theorem 3.3]{HRB}, the structure constants are given by $\lambda_{112} =(k+1)/2$,
$\lambda_{121}= (k-1)/2$. The corresponding $s$ matrix is:
$$s=\left[
   \begin{array}{ccc}
     1 & \sqrt{k} & \sqrt{k} \\
      1 & {\alpha}/{\sqrt{k}} &  {\bar\alpha}/{\sqrt{k}} \\
     1 &  {\bar\alpha}/{\sqrt{k}} &  {\alpha}/{\sqrt{k}}\\
    \end{array}
 \right].
$$
Now $\lambda_{ijk}=  {s^{-1}_{0k}}{s_{0i}s_{0j}N_{ijk}}$ implies $N_{ijk}= {\lambda_{ijk}s_{0k}}{s^{-1}_{0i}s^{-1}_{0j}}$.
Therefore, $N_{112}= ({k+1})/{2\sqrt{k}}$ and
$N_{121}= ({k-1})/{2\sqrt{k}}$. Since $N_{112}$ and $N_{121}$ are positive integers,  $k=1$. Then $\alpha=(-1+i\sqrt{3})/2=\zeta_3$, and
$$s(=P)=\left[
   \begin{array}{ccc}
     1 & 1 & 1 \\
      1 & \zeta_3 & \zeta^2_3 \\
     1 & \zeta^2_3 & \zeta_3\\
    \end{array}
 \right]\mbox{ and }S=\dfrac{1}{\sqrt3}\left[
   \begin{array}{ccc}
     1 & 1 & 1 \\
      1 & \zeta_3 & \zeta^2_3 \\
     1 & \zeta^2_3 & \zeta_3\\
    \end{array}
 \right]
.$$
%But $s\bar s^T=3I$, where $I$ is the identity matrix. Thus $s$ is an Allen matrix. Hence the proof.
\end{proof}

%From the theorems \ref{t2} and \ref{t9}, we have the following corollary.
%
%\begin{cor}
%There is no $C$-algebra of rank 3 arising from Fourier matrix (integral Fourier matrix). Thus there is no  Modular datum ( and integral Modular datum)of rank 3.
%\end{cor}

%\begin{thm}\label{ThreeDifferentDegreesThm4}Let $(A,\BB, \delta)$ be a non-homogenous $C$-algebra of rank $4$, arising from integral Fourier matrix, with standard  basis $\BB=\{b_0,b_1,\hdots, b_3\}$. If $\delta(b_1)= k$, $\delta(b_2)=l$  and $\delta(b_3)=m$ then $k=l=m=1$.
%\end{thm}

%\section{Character tables}

%Now we classify the Allen matrix of rank $4$ with one nontrivial degree equal to $1$. By Theorem \ref{SimulataneousPermThm}, we assume without loss of generality $\delta(b_1)=1$.

\smallskip

In the next lemma we show that if one of the nontrivial degree of $C$-algebra arising from an Allen matrix is equal to $1$ then absolute values of elements of the corresponding row of character table of the algebra are completely determined.

\begin{lemma}\label{EqualityOfKandLLemmaRank5}Let $(A,\BB, \delta)$ be a $C$-algebra arising from an Allen matrix $s$ of rank $r$ with standard  basis $\BB=\{b_0,b_1,\hdots, b_{r-1}\}$. Let $P$ be the character table of $A$ and  $|s_{ij}|\leq s_{0j}$ for all $j$. Let $\delta(b_j)=k_j(=p_{0j})$ be the degree of $b_j$ for all $j$ such that $k_0=k_i=1$ for some $i$.
\begin{enumerate}
  \item Then $|p_{ij}|= k_j$ for all $j$.
  \item If $s$ is a real matrix then $p_{ij}= \pm k_j$ for all $j$.
\end{enumerate}

\end{lemma}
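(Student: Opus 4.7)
The plan is to use the fact that the row of the Allen matrix indexed by $i$ and the row indexed by $0$ must have equal norms (because $k_i=1$), together with the termwise upper bound $|s_{ij}|\le s_{0j}$, to squeeze each term and force equality.

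First, I would record the key identity $s_{0j}=\sqrt{k_j}$. Recall from the proof of Theorem \ref{CAlgebraThm} that $\delta(b_j)=s_{0j}^2$, and from the symmetry of $S$ together with the positivity axiom $S_{l0}>0$ we get $s_{0j}=S_{0j}/S_{00}=S_{j0}/S_{00}>0$, so $s_{0j}=\sqrt{k_j}$. In particular $d_0=\sum_j |s_{0j}|^2=\sum_j k_j$, while in general $d_i=\sum_j|s_{ij}|^2$.

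Next, I would invoke Proposition \ref{SymmetrizingPropertyProp}, which says $m_j=\delta(b_j)=k_j$ and $d_j=d_0/m_j=d_0/k_j$. Specializing at the index $i$ with $k_i=1$ yields $d_i=d_0$. Combining this with the termwise hypothesis $|s_{ij}|^2\le s_{0j}^2$ gives
$$d_0=d_i=\sum_j |s_{ij}|^2\le \sum_j s_{0j}^2=d_0,$$
so equality must hold in every term. Hence $|s_{ij}|=s_{0j}=\sqrt{k_j}$ for all $j$.

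Finally, using $p_{ij}=s_{ij}s_{0j}$, part (i) is immediate: $|p_{ij}|=|s_{ij}|\,s_{0j}=s_{0j}^2=k_j$. For part (ii), real-valuedness of $s$ turns the modulus identity $|s_{ij}|=s_{0j}$ into $s_{ij}=\pm s_{0j}$, and multiplying by $s_{0j}$ yields $p_{ij}=\pm k_j$. There is no real obstacle here; the only delicate point is to make sure $s_{0j}>0$ (so that $|s_{0j}|^2=s_{0j}^2$ and we may freely identify $s_{0j}$ with $\sqrt{k_j}$), which is guaranteed by the symmetry and positivity axioms of a Fourier matrix.
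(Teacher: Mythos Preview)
Your proof is correct and follows essentially the same route as the paper: use Proposition \ref{SymmetrizingPropertyProp} to get $d_i=d_0$ from $k_i=1$, combine with the termwise bound $|s_{ij}|\le s_{0j}$ to force $|s_{ij}|=s_{0j}=\sqrt{k_j}$ for every $j$, and then convert to $p_{ij}$ via $p_{ij}=s_{ij}s_{0j}$. Your version is in fact more explicit than the paper's, which jumps directly from $d_0=d_i$ to $|p_{ij}|/\sqrt{k_j}=\sqrt{k_j}$ without spelling out the squeeze.
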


\begin{proof}
For $(i)$. Since $\delta(b_i)=1$,  $d_i= d_0$. The first and $i$th rows of $s$ are $[1,1,\sqrt{k_2},\sqrt{k_3},\hdots, \sqrt{k_{r-1}}]$ and $[1,p_{i1},p_{i2}/\sqrt{k_2},p_{i3}/\sqrt{k_3},\hdots, p_{i,r-1}/\sqrt{k_{r-1}}]$, respectively. Since $d_0=d_i$, $|p_{ij}|/\sqrt{k_j}= \sqrt{k_j}$ for all $j$. Hence $|p_{ij}|= k_j$ for all $j$.

\smallskip

For $(ii)$. By Part $(i)$, $|p_{ij}|= k_j$ for all $j$.   Since $s$ is a real matrix, $p_{ij}= \pm k_j$ for all $j$.
\end{proof}

\begin{lemma}\label{EqualityOfKandLLemmaRank4}Let $(A,\BB, \delta)$ be a $C$-algebra arising from a real Allen matrix $s$ of rank $4$  with standard  basis $\BB=\{b_0,b_1,\hdots, b_3\}$. Let $|s_{ij}|\leq s_{0j}$ for all $j$. If $\delta(b_1)=1$, $\delta(b_2)=k_2$ and $\delta(b_3) =k_3$, then either $k_3=k_2$ or $k_3=k_2+2$.
\end{lemma}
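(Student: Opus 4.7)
The plan is to avoid any direct manipulation of the entries of the character table and instead extract the conclusion purely from divisibility of norms. By Proposition \ref{SymmetrizingPropertyProp}, the multiplicities of $A$ equal the degrees, and $m_j = d_0/d_j$, so each degree divides the principal norm $d_0$. Since $d_0$ is the order $n = \sum_j \delta(b_j)$, the hypothesis gives
$$ d_0 \;=\; 1 + 1 + k_2 + k_3 \;=\; 2 + k_2 + k_3. $$

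Using the divisibility $k_2 \mid d_0$ and $k_3 \mid d_0$ I would then deduce
$$ k_2 \,\bigm|\, 2 + k_3 \qquad \text{and} \qquad k_3 \,\bigm|\, 2 + k_2. $$
By Theorem \ref{SimulataneousPermThm}, simultaneous row/column permutation preserves all the structure at hand, so I may relabel $b_2,b_3$ to assume $k_3 \geq k_2$. Then the second divisibility forces $k_3 \leq k_2 + 2$, so that $k_3 - k_2 \in \{0,1,2\}$.

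To finish I would eliminate the middle value. If $k_3 = k_2+1$, then $k_3 \mid (2+k_2) = k_3 + 1$ yields $k_3 \mid 1$, hence $k_3 = 1$ and $k_2 = 0$, contradicting positivity of the degree map (axiom (vi) of a $C$-algebra). The remaining two cases $k_3 = k_2$ and $k_3 = k_2+2$ are consistent with every divisibility constraint, giving exactly the dichotomy in the statement.

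I do not expect a genuine obstacle: the proof is a short arithmetic argument with the auxiliary hypotheses (real Allen matrix, $|s_{ij}|\leq s_{0j}$, $\delta(b_1)=1$) serving only to fix the context. The one subtle point is invoking integrality — namely that $k_2,k_3$ and the norms $d_j$ are positive integers, which is guaranteed by Proposition \ref{IntegralNormsAndDegreesProp} — and justifying that one may reorder $b_2,b_3$ without loss of generality via Theorem \ref{SimulataneousPermThm}. Both are routine appeals to results already established in the paper.
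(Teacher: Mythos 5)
Your argument is correct, but it is a genuinely different route from the paper's. The paper proves this lemma as an immediate corollary of Lemma \ref{EqualityOfKandLLemmaRank5}: since $\delta(b_1)=1$ forces $d_1=d_0$, that lemma (using the realness of $s$) gives $p_{11}=\pm 1$, $p_{12}=\pm k_2$, $p_{13}=\pm k_3$, and the vanishing row sum of the nontrivial character then matches the signs, giving $k_3=k_2$ when $p_{11}=-1$ and $k_3=k_2\pm 2$ when $p_{11}=1$. You instead use only arithmetic: $d_0=n=2+k_2+k_3$, the degrees are positive integers (Proposition \ref{IntegralNormsAndDegreesProp}), and each degree divides $d_0$ (Proposition \ref{SymmetrizingPropertyProp}), so $k_2\mid 2+k_3$ and $k_3\mid 2+k_2$, which with the elimination of the middle value $k_3=k_2+1$ yields the dichotomy. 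Your approach is more elementary and in fact more general — it never uses that $s$ is real or that $|s_{ij}|\leq s_{0j}$ — whereas the paper's approach additionally pins down the actual entries of the row of $P$ indexed by $b_1$, which is the information actually exploited in Proposition \ref{CharacterTableExpreesionWithDifferentNormPropRank4}. One small point: after your relabelling of $b_2,b_3$ you really prove $|k_3-k_2|\in\{0,2\}$ rather than the literal asymmetric statement; this is harmless, since the paper's own sign analysis admits the case $k_3=k_2-2$ as well (it reappears explicitly as Subcase 3 of the rank-$4$ proposition), so the intended reading of the lemma is precisely the symmetric one you establish.
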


\begin{proof}By Lemma \ref{EqualityOfKandLLemmaRank5},
 $p_{11}=\pm 1, p_{12} = \pm k_2$ and $p_{13}=\pm k_3$. Therefore, if $p_{11} = -1$, $k_3=k_2$, and if $p_{11} = 1$, then $k_3=k_2+2$.
\end{proof}

%We need to justify $|\alpha_2|=k$ implies $\alpha_2=\pm k$. Otherwise we have to restrict to real Allen Matrices. Or Non-homogeneous or all other degrees different from $1$.

%For a homogeneous $C$-algebra arising from Allen matrix $s$ with $|s_{ij}|\leq s_{0j} $ for all $j$, the authors have already proved that the Allen matrix is a character table of an abelian group, see \cite{G1}.

The following proposition classify the non-homogeneous Allen matrices of rank $4$ with one nontrivial degree equal to $1$.

\begin{prop}\label{CharacterTableExpreesionWithDifferentNormPropRank4} Let $(A,\BB, \delta)$ be a non-homogeneous $C$-algebra arising from an Allen matrix $s$ of rank $4$ with standard  basis $\BB=\{b_0,b_1,\hdots, b_3\}$. Let $|s_{ij}|\leq s_{0j} $ for all $j$.  If $\delta(b_i)=1$ for one $i>0$ and $\delta(b_j)=k_j$ for all $j \neq i$.  Then the character table $P$ has the following expressions up to simultaneous permutation of rows and columns. (Though we remark that the simultaneous permutations of the  row  and column  of their corresponding Allen matrices  gives us again  Allen matrices.)% these character tables is again .each the matrix in the proof is an Allen matrix even with permutations.)
$$\left[
   \begin{array}{cccc}
     1 & 1 & 2 &2\\
     1 & -1 &    2 &  -2\\
       1 &  1 &  -1&  -1\\
         1 & -1 &  -1&  1\\
   \end{array}\right], ~~\left[
   \begin{array}{cccc}
     1 & 1 & 2 &4\\
     1 &  1 &    2 &  -4\\
       1 &  1 &  -2&  0\\
         1 & -1 &  0& 0\\
   \end{array}\right]\mbox{ and } \left[
   \begin{array}{cccc}
     1 & 1 & 4 &6\\
     1 &  1 &   4 &  -6\\
       1 &  1 &  -2&  0\\
         1 & -1 &  0& 0\\
   \end{array}\right].$$
\end{prop}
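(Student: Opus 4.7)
The plan is to first pin down the possible degree patterns by a short divisibility argument, then to solve for the character table $P$ explicitly in each of the resulting cases.

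By Theorem~\ref{SimulataneousPermThm} I may relabel so that $\delta(b_1)=1$ and $\delta(b_2)=k_2$, $\delta(b_3)=k_3$ with $k_2,k_3\ge 2$ (the hypothesis puts exactly one nontrivial degree at $1$). Lemma~\ref{EqualityOfKandLLemmaRank4} then forces either $k_3=k_2$ or $k_3=k_2+2$. Set $n=2+k_2+k_3$; Proposition~\ref{SymmetrizingPropertyProp}(i) gives $k_2\mid n$ and $k_3\mid n$. If $k_3=k_2$ then $k_2\mid 2$, so $k_2=2$ and $n=6$. If $k_3=k_2+2$ then $k_2\mid 4$, so $k_2\in\{2,4\}$, yielding $(k_2,k_3,n)=(2,4,8)$ or $(4,6,12)$. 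Thus there are exactly three candidate degree patterns, matching the degrees in the three output matrices.

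With the degrees fixed, I construct $P=(p_{ij})$ in each case from three ingredients: Lemma~\ref{EqualityOfKandLLemmaRank5}(ii) (together with the sign analysis in the proof of Lemma~\ref{EqualityOfKandLLemmaRank4}) determines row~$1$ up to two signs $\epsilon_1,\epsilon_2\in\{\pm 1\}$, with $p_{11}=-1$ in the first case and $p_{11}=+1$ in the other two; the self-duality relation $p_{ij}k_i=p_{ji}k_j$ (coming from $S=S^{T}$) then fixes $p_{21},p_{31}$ in terms of these signs; and the row orthogonality $\sum_j p_{ij}\overline{p_{i'j}}/k_j=(n/k_i)\delta_{ii'}$ (coming from $s\bar s^{T}=\mathrm{diag}(d_l)$ with $d_l=n/k_l$) supplies the remaining linear and quadratic equations on $p_{22},p_{23},p_{33}$. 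For $n=8$ and $n=12$, the row-$3$ orthogonality $\sum_j|p_{3j}|^2/k_j=n/k_3$ immediately forces $p_{23}=p_{33}=0$: the $j=0,1$ contributions already saturate $n/k_3=2$, leaving no room for further positive terms. The row-$0$/row-$2$ and row-$0$/row-$3$ relations then pin down the remaining entries and signs uniquely, producing the second and third matrices of the statement.

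For $n=6$ the orthogonalities reduce to $p_{22}^2+p_{23}^2=2$ and $p_{23}^2+p_{33}^2=2$, which now admit several small solutions. A finite enumeration over the four sign patterns $(\epsilon_1,\epsilon_2)$, combined with the row-$0$/row-$i$ linear relations and the cross-orthogonality of rows $2$ and $3$, leaves exactly one table modulo the simultaneous row-column permutations of Theorem~\ref{SimulataneousPermThm}, namely the first matrix of the statement. This case is the main obstacle: unlike $n=8,12$, nothing is forced to vanish, so one must carry out the explicit enumeration and verify that the apparently distinct sign patterns all coincide under simultaneous row/column permutation. Non-integer alternatives of the form $p_{2j}\in\{\pm\sqrt 2,0\}$ are ruled out by the linear row-$0$/row-$i$ equations, which require $1+\epsilon_1+p_{22}+p_{23}=0$ with $\epsilon_1=\pm 1$, incompatible with an irrational coordinate.
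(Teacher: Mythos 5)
Your computational skeleton (divisibility to get the three degree patterns $(1,1,2,2)$, $(1,1,2,4)$, $(1,1,4,6)$, then row sums, the duality relation $p_{ij}k_i=p_{ji}k_j$, and the norm conditions $d_i=n/k_i$ to pin down the entries) is essentially the paper's Case~2, and those computations are sound. The genuine gap is that you never rule out non-real entries in the row attached to the degree-one basis element. The proposition does not assume the Allen matrix is real, yet your very first step invokes Lemma~\ref{EqualityOfKandLLemmaRank4} and the sign analysis $p_{1j}=\pm k_j$, and Lemma~\ref{EqualityOfKandLLemmaRank4} (via Lemma~\ref{EqualityOfKandLLemmaRank5}(ii)) is stated only for \emph{real} Allen matrices. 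A priori row $1$ could consist of non-real algebraic integers of absolute values $1,k_2,k_3$, and then no $\pm$-sign analysis applies. The paper devotes its Case~1 to exactly this: a non-real entry makes $\bar\chi_1$ a second irreducible character of multiplicity $1$, so by Proposition~\ref{SymmetrizingPropertyProp}(ii) a second nontrivial degree equals $1$, which forces the pattern $[1,1,1,3]$ and is then killed using the symmetry of $S$ (the row of $s$ with norm $2$ must contain zeros, while the conjugate-pair rows have no zero entries). If you read the hypothesis as ``exactly one nontrivial degree equals $1$,'' the conjugate-character observation already gives the contradiction, but some version of this argument must appear before Lemma~\ref{EqualityOfKandLLemmaRank4} can be quoted; as written your proof simply assumes reality of that row.

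A second, smaller defect: in the $n=6$ case you dismiss entries such as $\pm\sqrt{2}$ by saying the linear relation $1+\epsilon_1+p_{22}+p_{23}=0$ is ``incompatible with an irrational coordinate.'' That inference is false on its own (e.g.\ $p_{22}=\sqrt2$, $p_{23}=-2-\sqrt2$, or a cancelling pair $\pm\sqrt2$, satisfies such a linear relation), and irrational entries genuinely do occur in the rank-$5$ tables of this paper, so they cannot be waved away. The exclusion does work here, but only by combining the linear relation with the quadratic norm condition: writing $p_{23}=-2-p_{22}$ and $|p_{22}|^2+|p_{23}|^2=2$ gives $(x+1)^2+y^2=0$ for $p_{22}=x+iy$, forcing $p_{22}=p_{23}=-1$ (and then $p_{32}=p_{23}$, $p_{33}=-p_{32}$ finish the table). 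With that correction, and with the missing complex-entry case supplied, your route matches the paper's Case~2 analysis and yields the three stated tables.
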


\begin{proof} Without loss of generality, let $k_1=1$, see Theorem \ref{SimulataneousPermThm}. Thus the character table
$$P=\left[
   \begin{array}{cccc}
     1 & 1 & k_2 & k_3\\
     1 & p_{11} &   p_{12} & p_{13}\\
       1 & p_{21} &  p_{22}&  p_{23}\\
         1 & p_{31} &  p_{32}&  p_{33}\\
   \end{array}
 \right].$$

Case 1. If $p_{11},  p_{12}$ and  $p_{13}$ are not rational integers.

\smallskip

Since $d_0=d_1$, $|p_{11}|=1, |p_{12}|=k_2$ and $|p_{13}|=k_3$. Thus $p_{11},  p_{12}$ and  $p_{13}$ cannot be irrational, hence they can be non-real algebraic integers. As complex conjugate of an irreducible character is an irreducible character, without loss of generality we assume the third irreducible character is a complex conjugate of the second character, thus we have $k_2=1$, see Theorem \ref{SimulataneousPermThm}. Therefore, the entries of the fourth row of $P$ might be rational integers and as $A$ is non-homogeneous, by Proposition \ref{SymmetrizingPropertyProp}, $d_0=d_im_i$  implies $k_3=3$ and $d_3=2$. Hence there might be two zeros in the fourth row. But each entry of the rows $1, 2$ and $3$ is non-zero. As $S$ is symmetric, we get a contradiction.

\medskip

Case 2. If $p_{11},  p_{12}$ and  $p_{13}$ are rational integers.

\smallskip

By Lemma \ref{EqualityOfKandLLemmaRank4}, the second row of $P$ is either $[1,-1,k_2,-k_2]$ or $[1,1,k_2,-(k_2+2)]$.

\medskip

Subcase 1. Let the second row of $P$ be $[1,-1,k_2,-k_2]$.

\smallskip

Then, the first line of the character table is $[1,1,k_2,k_2]$.
%Thus the character table $$P=\left[
%   \begin{array}{cccc}
%     1 & 1 & k_2 &k_2\\
%     1 & -1 &    k_2 &  -k_2\\
%       1 & p_{21} &  p_{22}&  p_{23}\\
%         1 & p_{31} &  p_{32}&  p_{33}\\
%   \end{array}
% \right].$$
Using symmetry of the matrix $S$ and orthogonality of characters, we have
$$P=\left[
   \begin{array}{cccc}
     1 & 1 & k_2 &k_2\\
     1 & -1 &    k_2 &  -k_2\\
      1 &  1 &  -1&  -1\\
         1 & -1 &  -1&  1\\
   \end{array}
 \right].$$
Since $A$ is non-homogenous, $k_2\neq1$. As $k_2$ divides $d_0$, therefore $k_2=2$. Hence
$$P=\left[
   \begin{array}{cccc}
     1 & 1 & 2 &2\\
     1 & -1 &    2 &  -2\\
       1 &  1 &  -1&  -1\\
         1 & -1 &  -1&  1\\
   \end{array}\right]$$

\medskip

Subcase 2. Let the second row of $P$ be $[1, 1,k_2,-(k_2+2)]$.

\smallskip

Then, the first line of the character table is $[1,1,k_2,k_2+2]$.
%Thus the character table
%$$P=\left[
%   \begin{array}{cccc}
%     1 & 1 & k_2 &k_2+2\\
%     1 & 1 &    k_2 &  -(k_2+2)\\
%         1 & p_{21} &  p_{22}&  p_{23}\\
%         1 & p_{31} &  p_{32}&  p_{33}\\
%   \end{array}
% \right].$$
The row sum of the character table is zero for each row, using symmetry of the matrix $S$ and orthogonality of characters, we have
$$P=\left[
   \begin{array}{cccc}
     1 & 1 & k_2 &k_2+2\\
     1 & 1 &    k_2 &  -(k_2+2)\\
     1 & 1 &  -2&  0\\
     1 &-1 &  0&  0\\
   \end{array}
 \right].$$
Since $A$ is non-homogenous, $k_2\neq1$. As $k_2$ divides $d_0$, therefore $k_2=2$ or $4$. Hence
$$ P=\left[
   \begin{array}{cccc}
     1 & 1 & 2 &4\\
     1 &  1 &    2 &  -4\\
       1 &  1 &  -2&  0\\
         1 & -1 &  0& 0\\
   \end{array}\right]\mbox{ or } P=\left[
   \begin{array}{cccc}
     1 & 1 & 4 &6\\
     1 &  1 &   4 &  -6\\
       1 &  1 &  -2&  0\\
         1 & -1 &  0& 0\\
   \end{array}\right].$$

\medskip

Subcase 3. Let the second row of $P$ be $[1, 1,-k_2,k_2-2)]$.

\smallskip

Then, the first line of the character table is $[1,1,k_2,k_2-2]$.
%Thus the character table
%$$P=\left[
%   \begin{array}{cccc}
%     1 & 1 & k_2 &k_2-2\\
%     1 & 1 &    -k_2 &  k_2-2\\
%         1 & p_{21} &  p_{22}&  p_{23}\\
%         1 & p_{31} &  p_{32}&  p_{33}\\
%   \end{array}
% \right].$$
The row sum of the character table is zero for each row, using symmetry of the matrix $S$ and orthogonality of characters, we have
$$P=\left[
   \begin{array}{cccc}
      1 & 1 & k_2 &k_2-2\\
     1 & 1 &    -k_2 &  k_2-2\\
      1 & -1 &  p_{22}&  0\\
         1 & 1 & 0&  -2\\
   \end{array}
 \right].$$
Since $(k_2-2)\big|(k_2+2)$, $k_2\in\{3,4,6\}$.  As $|s_{ij}|\leq s_{0j}$ therefore $k_2=3$ is not possible. For $k_2\in\{4,6\}$, $d=2$ implies $p_{22}=0$. Thus the following are the only possible values of $P$.
$$\left[
   \begin{array}{cccc}
      1 & 1 & 4 &2\\
     1 & 1 &    -4 &  2\\
      1 & -1 &  0&  0\\
         1 & 1 & 0&  -2\\
   \end{array}
 \right],~~\left[
   \begin{array}{cccc}
      1 & 1 & 6 &4\\
     1 & 1 &    -6 &  4\\
      1 & -1 &  0&  0\\
         1 & 1 & 0&  -2\\
   \end{array}
 \right].$$
Note that corresponding to each of the above $P$ matrix the Allen matrix $s$ has nonnegative integral structure constants $N_{ijk}$. Since the above matrices are real matrices, any permutation of $i,j,k$ in the subscript of $N_{ijk}$ gives us the same structure constants. Hence,  if we use the fact that the above matrices being the character tables of the association schemes have the (nonnegative) integral structure constants then  we can easily verify the integrality of the structure constants $N_{ijk}$ through the Allen integrality condition, see Definition \ref{AllenIntDef} and Theorem \ref{IntegralCAlgebrasAndFourierMatrices}.
\end{proof}

\begin{remark}\rm We note that all the Allen matrices in the above proposition are not integral Fourier matrices because the degrees are not square integers, see Lemma \ref{SqaureDegreesForIntegralFurierCaseLemma}. In the above case the first character table is exactly same as the character table for the association schemes {\tt as6(5)}  up to simultaneous permutation of rows $3,4$. The remaining two are the character tables of the association schemes {\tt as8(4)} and {\tt as12(8)}.
\end{remark}

Since the row sum for the character table of a $C$-algebra is zero, if $\delta(b_i)=1$ we can determine the $i$th row of the character table $P$. In the next proposition we assume $r-1$ entries of the second row and adjust the $r$th entry of the second row using the fact that row sum is zero and $d_i=d_0$ then determine whether there is an Allen matrix of rank $5$.% under the two conditions.

%\begin{prop}\label{CharacterTableExpreesionWithDifferentNormPropRankN} Let $(A,\BB, \delta)$ be a non-homogeneous $C$-algebra, of rank $r$, arising from a real Allen matrix with standard  basis $\BB=\{b_0,b_1,\hdots, b_{r-1}\}$. If $|s_{ij}|\leq s_{0j} $ for all $j$.  If $\delta(b_i)=1$ for one $i>0$ and $\delta(b_j)=k_j$ for all $j \neq i$.
%\begin{enumerate}
%  \item Then $k_l\neq \sum\limits^{}_{j\neq l}k_j$.
%  \item Then $k_l\neq \sum\limits^{}_{j\neq l}k_j-2$.
%  \item
%\end{enumerate}
%\end{prop}
%
%\begin{proof} For $(i)$. Without loss of generality assume $k_1=1$ and $k_{r-1}= \sum\limits^{}_{j\neq r-1}k_j$.
%Therefore, $p_{1j}=p_{0j}$ for all $j\neq r-1$ and  $p_{1,r-1}=-\sum\limits^{}_{j\neq r-1}k_j$.
%Since $S$ is a symmetric matrix, $p_{i1}=1$ for all $i\neq r-1$ and $p_{1,r-1}=1$. Since the row sum is zero, $p_{2,r-1}=-(1+p_{22}+\hdots+p_{r-2})$. Using the orthogonality of the characters $1,2$ and $r$, we have $p_{22}+\hdots+p_{r-2}=1$ and $p_{22}+\hdots+p_{r-2}=0$, which is not possible.
%
%For $(ii)$.
%
%
%\end{proof}

\begin{prop}\label{CharacterTableExpreesionWithDifferentNormPropRank5}
Let $(A,\BB, \delta)$ be a non-homogeneous $C$-algebra arising from an Allen matrix $s$ of rank $5$ with standard  basis $\BB=\{b_0,b_1,\hdots, b_4\}$. If $|s_{ij}|\leq s_{0j} $ for all $j$.  If $\delta(b_i)=1$ for one $i>0$ and $\delta(b_j)=k_j$ for all $j \neq i$. Then the character table has the following expression up to simultaneous permutation of row and columns.
$$\left[
   \begin{array}{ccccc}
     1 & 1 & 2 &2&2\\
     1 & 1 &  2 &  -2&-2\\
      1 & 1 & -2& 0&0\\
     1 &  -1 & 0&   \sqrt{2} &-\sqrt{2}\\
     1 & -1 & 0&  -\sqrt{2} & \sqrt{2}\\
   \end{array}\right], ~~\left[
   \begin{array}{ccccc}
     1 & 1 & 2 &4&8\\
     1 & 1 &   2 & 4&-8\\
      1 &  1 & 2&  -4& 0\\
     1 & 1 &  -2&  0& 0\\
     1 &  -1 &  0&  0& 0\\
\end{array}\right],
%~~\left[
%   \begin{array}{ccccc}
%     1 & 1 & 4 &12&18\\
%     1 & 1 &   4 & 12&-18\\
%      1 &  1 & 4&  -6& 0\\
%     1 & 1 &  -2&  0& 0\\
%     1 &  -1 &  0&  0& 0\\
%   \end{array}\right],$$
%$$%\left[
%   \begin{array}{ccccc}
%     1 & 1 & 4 &12&18\\
%     1 & 1 &   4 & 12&-18\\
%      1 &  1 & -5&  3& 0\\
%      1 & 1 &  -2&  0& 0\\
%     1 &  -1 &  0&  0& 0\\
%   \end{array}\right],~~ \left[
%   \begin{array}{ccccc}
%     1 & 1 & 4 &12&18\\
%     1 & 1 &   4 & 12&-18\\
%      1 &  1 & 4&  -6& 0\\
%     1 & 1 &   1&  -3& 0\\
%     1 &  -1 &  0&  0& 0\\
%   \end{array}\right], ~~
   \left[
   \begin{array}{ccccc}
     1 & 1 & 4 &3&3\\
     1 & 1 &  4 &  -3&-3\\
      1 & 1 & -2& 0&0\\
     1 &  -1 & 0&   \sqrt{3} &-\sqrt{3}\\
     1 & -1 & 0&  -\sqrt{3} & \sqrt{3}\\
   \end{array}\right].
$$
\end{prop}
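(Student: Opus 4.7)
The plan is to extend the rank-$4$ argument of Proposition \ref{CharacterTableExpreesionWithDifferentNormPropRank4} by tracking one additional row and column. By Theorem \ref{SimulataneousPermThm}, I may relabel so that the distinguished unit degree occupies position $1$, and write the degree sequence as $(1,1,k_2,k_3,k_4)$. Lemma \ref{EqualityOfKandLLemmaRank5}(i) then forces $|p_{1j}|=k_j$ for every column $j$. Self-duality of $A$ combined with symmetry of $S$ yields the useful scaling $k_i\,p_{ij}=k_j\,p_{ji}$, and pairing the orthogonality relation displayed in the introduction with the trivial character gives that the row sum of every non-principal row of $P$ vanishes.

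First I would split into two principal cases, mirroring the rank-$4$ argument. In Case~1, all entries of row $1$ are rational integers, so $p_{1j}\in\{+k_j,-k_j\}$ and $p_{10}=1$. The zero-row-sum condition becomes $1+p_{11}\pm k_2\pm k_3\pm k_4=0$ with $p_{11}=\pm 1$, and the divisibility $k_j\mid d_0$ (Proposition \ref{SymmetrizingPropertyProp}) severely restricts the admissible triples $(k_2,k_3,k_4)$. For each surviving triple and sign pattern, the remaining three rows are then pinned down by the row-sum condition, column orthogonality, the Allen integrality condition (Definition \ref{AllenIntDef} and Theorem \ref{IntegralCAlgebrasAndFourierMatrices}), and the bound $|p_{ij}|\leq p_{0j}$. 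The subcase $p_{11}=-1$ forces equal-degree pairs among $k_2,k_3,k_4$ and, after invoking the scaling relation $k_ip_{ij}=k_jp_{ji}$, produces the first and third listed tables; the subcase $p_{11}=+1$ leads, once divisibility is imposed, to the second listed table.

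In Case~2, at least one entry of row $1$ is non-real. Since the complex conjugate of an irreducible character is again an irreducible character, this forces another row to be the complex conjugate of row $1$, and by $|p_{1j}|=k_j$ the corresponding pair of degrees must coincide. The resulting constraints, combined with the divisibility and symmetry already used, reduce this to a small finite search which, I expect, yields no character tables beyond those already produced in Case~1 (the $\pm\sqrt{2}$ and $\pm\sqrt{3}$ entries in the last two rows of the first and third tables arise naturally here from conjugate pairs).

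The main obstacle I anticipate is not any single step but the size of the case analysis: enumerating $(k_2,k_3,k_4)$ subject to $k_j\mid d_0$ and the zero-row-sum equation, then filling in the remaining three rows from symmetry and orthogonality, produces many sub-scenarios that must be eliminated, typically by showing a prospective entry fails to be an algebraic integer, violates $|p_{ij}|\leq p_{0j}$, or contradicts $P\bar P=nI$. Organizing the enumeration so that each dismissal is transparent is the bulk of the labour; the individual calculations are routine linear algebra once the degree triple and sign pattern of row $1$ are fixed.
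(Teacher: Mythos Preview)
Your plan is essentially the paper's own approach: relabel so that $k_1=1$, use Lemma \ref{EqualityOfKandLLemmaRank5} to get $|p_{1j}|=k_j$, split on whether row $1$ is rational-integer, and in the integer case enumerate the sign patterns of row $1$ subject to zero row-sum, then pin down the remaining rows via orthogonality, the symmetry relation $k_ip_{ij}=k_jp_{ji}$, divisibility $k_j\mid d_0$, and the bound $|p_{ij}|\le p_{0j}$.

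One correction to your expectations: your attribution of outcomes to the sign of $p_{11}$ is off. In all three listed tables $p_{11}=+1$; the subcases with $p_{11}=-1$ (second row $[1,-1,\pm k_2,\pm k_3,\ast]$) are all eliminated in the paper. The $(2,2,2)$ and $(4,3,3)$ tables come from the sign pattern $[1,1,k_2,-k_3,-(k_2-k_3+2)]$, and the $(2,4,8)$ table from $[1,1,k_2,k_3,-(k_2+k_3+2)]$. The irrational $\pm\sqrt{2}$, $\pm\sqrt{3}$ entries appear in rows $3$ and $4$, not row $1$, so they arise inside your Case~1 (integer row $1$), not from a non-real row $1$; your Case~2 produces nothing, as you surmised. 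This does not affect the soundness of the plan, only where you will find each table when you carry out the enumeration.
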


\begin{proof}

Without loss of generality, let $k_1=1$, see Theorem \ref{SimulataneousPermThm}. Thus the character table
$$P=\left[
   \begin{array}{ccccc}
     1 & 1 & k_2 & k_3&k_4\\
     1 & p_{11} &   p_{12} & p_{13}&p_{14}\\
     1 &  p_{21} &  p_{22}&  p_{23}& -(1+p_{21} +  p_{22}+  p_{23})\\
     1 &  p_{31} &  p_{32}&  p_{33}& -(1+p_{31} +  p_{32}+  p_{33})\\
     1 &  p_{41} &  p_{42}&  p_{43}& -(1+p_{41} +  p_{42}+  p_{43})\\
   \end{array}
 \right].$$

Case 1. If $p_{11},  p_{12}, p_{13}$ and  $p_{14}$ are not all rational integers.

\smallskip

Since $d_0=d_1$, $|p_{11}|=1, |p_{12}|=k_2, |p_{13}|=k_3$ and $|p_{14}|=k_4$. Since the row sum is zero, at least two of these $p_{11},  p_{12}, p_{13}$ and  $p_{14}$ can be non-real algebraic integers. As complex conjugate of an irreducible character is an irreducible character, without loss of generality, we assume that the third irreducible character is a complex conjugate of the second character, thus we have $k_2=1$. Without loss of generality, let $k_3\leq k_4$.  By Proposition \ref{SymmetrizingPropertyProp}, $d_0=d_im_i$  implies $(k_3,k_4) \in \{(1, 2),
(1, 4),
(2, 5),
(3, 6),
(6, 9)
\}$.

If $k_3=1$ then $d_3=d_0=d_1=d_2$ implies all the entries of the row $1,2,3$ and $4$ are non zero. Since $k_3\neq k_4$, the entries of the fifth row  are rational integers as the complex conjugate of an irreducible character is an irreducible character. Thus there might be at least two zero entries in the fifth row of $P$. But the matrix $S$ is symmetric. Hence we get a contradiction.

For $(k_3,k_4) \in \{(2, 5),
(3, 6),
(6, 9)
\}$ as $k_3\neq k_4$, the entries of the row $5$ are rational integers because $k_2=k_1=k_0=1$ and the complex conjugate of an irreducible character is an irreducible character. Each entry of the row $1, 2$ and $3$ of $P$ is non-zero. But for each the above pair there are exactly $3$ zeros in the fifth row. Since $S$ is a symmetric matrix, we get a contradiction.

\medskip

Case 2. If $p_{11},  p_{12}, p_{13}$ and  $p_{14}$ are all rational integers.

\smallskip

By Lemma \ref{EqualityOfKandLLemmaRank5} and Theorem \ref{SimulataneousPermThm} the only possible degree patterns are:

$[1, -1, k_2, k_3, -(k_2+k_3)]$, $[1, 1, k_2, k_3, -(k_2+k_3+2)]$, $[1, ~ 1,~     k_2,   ~-k_3,~ -(k_2-k_3+2)]$,

$[1, ~ -1,~     k_2,   ~-k_3,~ -(k_2-k_3)]$, $[1, ~ 1,~     -k_2,   ~-k_3,~ k_2+k_3-2]$, $[1, ~ -1,~     -k_2,   ~-k_3,~ k_2+k_3]$.

\medskip

Subcase 1. Let the second row of $P$ be $[1, -1, k_2, k_3, -(k_2+k_3)]$.

\smallskip

Then the first row of the character table is $[1, 1, k_2, k_3, k_2+k_3]$ and $(k_2+k_3)\big|(2+k_2+k_3)$. Thus $(k_2,k_3)=(1,1)$.
%there exist a positive integer $t$ such that $2+k_2+k_3=t(k_2+k_3)$ implies $k_2+k_3=2(t-1)^{-1}\in \ZZ^+$. Since $k_2$ and $k_3$ are positive integers, $t=2$ implies $k_2=k_3=1$.
%Also $S^T=S$, thus we have
%$$P=\left[
%   \begin{array}{ccccc}
%     1 & 1 & 1 &1&2\\
%     1 & -1 &    1 &  1&-2\\
%     1 &  1 &  p_{22}&  p_{23}& -(2+  p_{22}+  p_{23})\\
%     1 & 1 &  p_{32}&  p_{33}& -(2+  p_{32}+  p_{33})\\
%     1 &  -1 &  p_{42}&  p_{43}& -(p_{42}+  p_{43})\\
%   \end{array}\right].
%$$
Therefore, by the orthogonality of characters, we have
$$P=\left[
   \begin{array}{ccccc}
     1 & 1 & 1 &1&2\\
     1 & -1 &    1 &  1&-2\\
     1 &  1 &  p_{22}&  p_{23}& -1\\
     1 & 1 &  p_{32}&  p_{33}& -1\\
     1 &  -1 & p_{42}&  p_{43}& 1\\
   \end{array}\right].
$$
As $m_2=m_3=1$, therefore $d_2=d_3=6$. But each of $|p_{22}|$, $|p_{23}|$, $|p_{32}|$ and $|p_{33}|$ can be at most $1$, thus both $d_2$ and $d_3$ are strictly less than  $6$, a contradiction. Hence this case is not possible.

\medskip

Subcase 2. Let the second row of $P$ be $[1, 1, k_2, k_3, -(k_2+k_3+2)]$.

\smallskip

Then the first row of the character table is $[1, 1, k_2, k_3, k_2+k_3+2]$.
%Thus the character table
%$$P=\left[
%   \begin{array}{ccccc}
%     1 & 1 & k_2 &k_3&k_2+k_3+2\\
%     1 & 1 &   k_2 &  k_3&-(k_2+k_3+2)\\
%      1 &  p_{21} &  p_{22}&  p_{23}& -(1+p_{21} +  p_{22}+  p_{23})\\
%     1 &  p_{31} &  p_{32}&  p_{33}& -(1+p_{31} +  p_{32}+  p_{33})\\
%     1 &  p_{41} &  p_{42}&  p_{43}& -(1+p_{41} +  p_{42}+  p_{43})\\
%   \end{array}\right].
%$$
Therefore, by the orthogonality of the irreducible characters and symmetry of the matrix $S$, we have
$$P=\left[
   \begin{array}{ccccc}
     1 & 1 & k_2 &k_3&k_2+k_3+2\\
     1 & 1 &   k_2 &  k_3&-(k_2+k_3+2)\\
      1 &  1 &  p_{22}&  -2-p_{22}& 0\\
     1 & 1 &  p_{32}&  -2-p_{32}& 0\\
     1 &  -1 &  0&  0& 0\\
   \end{array}\right].
$$
Without loss of generality, let $k_2\leq k_3$. Since $k_2\big|(2k_3+4)$ and $k_3\big|2k_2+4$, we have

$(k_2,k_3)\in \{(1, 2),
(1, 3),
(1, 6),
(2, 4),
(2, 8),
(3, 10),
(4, 6),
(4, 12),
(6, 16),
(8, 10),
(12, 28)\}
$.
%
%\smallskip
%
%Proof of Claim: Suppose $k_2\leq k_3$. If $4<k_2$ then $2k_2+4$ is not divisible by $k_2$, and is less than $3k_2$. This means $k_3$ is either $2k_2+4$ or $(2k_2+4)/2=k_2+2$ because $k_3$ is greater then $k_2>(2k_2+4)/3$. If $k_3=2k_2+4$, then $k_2$ divides $4k_2+12$, so $k_2$ divides $12$ and is $>4$.  This produces $2$ cases,
%$(k_2,k_3)$=$(6,16)$ and $(12,28)$.
%If $k_3=k_2+2$, then $k_2$ divides $2k_2+8$, so $k_2$ divides $8$ and is $>4$.  This produces $(k_2,k_3)=(8,10)$. If $k_2 \in \{1,2,3,4\}$ then it is easy to see that $(k_1,k_2)\in \{(1,2),(1,3),(1,6),(2,4),(2,8),(3,10),(4,6),(4,12)\}$.

Note that $k_2\neq k_3$, thus $d_3\neq d_4$. But the  conjugate irreducible characters have same norm. Thus $p_{22},  p_{23}, p_{32}$ and $p_{33}$
are rational integers. Therefore,   det$(P) \in\ZZ$ and $(\mbox{det}P)^2=n$. Thus $n=2(k_2+k_3+2)$ need to be a square. But the only two pairs $(2, 4),
(4, 12)$ do not fail this test.

For $(k_2,k_3)=(2, 4)$, $d_2=8=1+1+\big(\dfrac{p_{22}}{\sqrt2}\big)^2+\big(\dfrac{-2-p_{22}}{\sqrt4}\big)^2$. Since the entries of $P$ are algebraic integers, we have $p_{22}=2$.
Similarly, $d_3=4$ implies $p_{32}=-2$.
%$=1+1+\big(\dfrac{p_{32}}{\sqrt2}\big)^2+\big(\dfrac{-2-p_{32}}{\sqrt4}\big)^2$. This implies $p_{32}=-2$.
Thus the character table
$$P=\left[
   \begin{array}{ccccc}
     1 & 1 & 2 &4&8\\
     1 & 1 &   2 & 4&-8\\
      1 &  1 & 2&  -4& 0\\
     1 & 1 &  -2&  0& 0\\
     1 &  -1 &  0&  0& 0\\
\end{array}\right].$$

For $(k_2,k_3)=(4, 12)$, $d_2=9$ and  $d_3=3$. Therefore, we have  $p_{22}=4,-5$ and  $p_{32}=1, -2$.
%$=1+1+\big(\dfrac{p_{22}}{\sqrt4}\big)^2+\big(\dfrac{-2-p_{22}}{\sqrt{12}}\big)^2$. This implies $p_{22}=4,-5$.
%and $d_3=3=1+1+\big(\dfrac{p_{32}}{\sqrt4}\big)^2+\big(\dfrac{-2-p_{32}}{\sqrt12}\big)^2$. This implies $p_{32}=1, -2$.

\smallskip

Thus the possible character tables are:
$$P_1=\left[
   \begin{array}{ccccc}
     1 & 1 & 4 &12&18\\
     1 & 1 &   4 & 12&-18\\
      1 &  1 & 4&  -6& 0\\
     1 & 1 &   1&  -3& 0\\
     1 &  -1 &  0&  0& 0\\
   \end{array}\right],\mbox{  } P_2=\left[
   \begin{array}{ccccc}
     1 & 1 & 4 &12&18\\
     1 & 1 &   4 & 12&-18\\
      1 &  1 & -5&  3& 0\\
     1 & 1 &   1&  -3& 0\\
     1 &  -1 &  0&  0& 0\\
   \end{array}\right],$$
$$P_3=\left[
   \begin{array}{ccccc}
     1 & 1 & 4 &12&18\\
     1 & 1 &   4 & 12&-18\\
      1 &  1 & 4&  -6& 0\\
     1 & 1 &  -2&  0& 0\\
     1 &  -1 &  0&  0& 0\\
   \end{array}\right],\mbox{   } P_4=\left[
   \begin{array}{ccccc}
     1 & 1 & 4 &12&18\\
     1 & 1 &   4 & 12&-18\\
      1 &  1 & -5&  3& 0\\
      1 & 1 &  -2&  0& 0\\
     1 &  -1 &  0&  0& 0\\
   \end{array}\right].$$
Since an Allen matrix have algebraic integer entries, the above matrices are not the character tables of the $C$-algebras arising from Allen matrices, see \cite[Proposition 2.17]{HIB2}, \cite{MC1}.
%each the Allen matrix corresponding to each of above character tables should have algebraic integer entries, the above matrices are not    Note that the matrix $P_2$ neither have integral structure constants nor satisfy the Allen integrality condition and after scaling the  columns we do  not get algebraic integers. The matrices $P_3$ and $P_4$ do not satisfy the Allen integrality condition, in particular, they have $N_{341}=6/\sqrt{48}$ and $N_{440}=1/2$, respectively.

\medskip

Subcase 3. Let the second row of $P$ be $[1, ~ 1,~     k_2,   ~-k_3,~ -(k_2-k_3+2)]$.

\smallskip

Then, the first row of the character table is $[1, 1, k_2, k_3, k_2-k_3+2]$.
%Thus the character table
%$$P=\left[
%   \begin{array}{ccccc}
%     1 & 1 & k_2 &k_3&k_2-k_3+2\\
%     1 & 1 &   k_2 &  -k_3&-(k_2-k_3+2)\\
%     1 &  p_{21} &  p_{22}&  p_{23}& -(1+p_{21} +  p_{22}+  p_{23})\\
%     1 &  p_{31} &  p_{32}&  p_{33}& -(1+p_{31} +  p_{32}+  p_{33})\\
%     1 &  p_{41} &  p_{42}&  p_{43}& -(1+p_{41} +  p_{42}+  p_{43})\\
%   \end{array}\right].
%$$
Therefore, by the orthogonality of characters, symmetry of the matrix $S$ and $P\bar P=nI$, we have
$$P=\left[
   \begin{array}{ccccc}
     1 & 1 & k_2 &k_3&k_2-k_3+2\\
     1 & 1 &   k_2 &  -k_3&-(k_2-k_3+2)\\
      1 & 1 & -2& 0&0\\
     1 &  -1 & 0&  p_{33}& -p_{33}\\
     1 & -1 & 0&  p_{43}& -p_{43}\\
   \end{array}\right].
$$
Therefore $k_2\big|4$, $k_3\big|2k_2+4$,  $(k_2-k_3+2)\big|(k_2+k_3+2)$ and $k_2-k_3+2>0$.
Hence $(k_2,k_3)\in \{(1, 1),
(1, 2),
(2, 2),
(4, 2),
(4, 3),
(4, 4)\}$.
%$\{(1,1),(1,2),(2,2),(2,3),(3,3),(3,4),(4,2),(4,3),(4,4),(4,5)\}$.
Since $|s_{ij}|\leq s_{0j}$, $(k_2,k_3) \not \in \{(1,1),(1,2)\}$.
%By Proposition \ref{SymmetrizingPropertyProp}, $d_0=d_im_i$, also each $d_i$ and $m_i$ is an integer, therefore, $(k_2,k_3) \not \in \{(3,3),(3,4),(4,3),(4,4),(4,5)\}$.
%
%\medskip
For $(k_2,k_3)=(2,2)$, $d_3=d_4=4$. Thus $p_{33}\bar p_{33}=2$, $p_{43}\bar p_{43}=2$. But the integrality of the structure constants and orthogonality of characters forces $p_{33}=  \pm\sqrt{2}$  and $p_{43}=  \mp\sqrt{2}$. Therefore, up to simultaneous permutation of row $4$ and row $5$, and column $4$ and column $5$, we have
$$P=\left[
   \begin{array}{ccccc}
     1 & 1 & 2 &2&2\\
     1 & 1 &  2 &  -2&-2\\
      1 & 1 & -2& 0&0\\
     1 &  -1 & 0&   \sqrt{2} &-\sqrt{2}\\
     1 & -1 & 0&  -\sqrt{2} & \sqrt{2}\\
   \end{array}\right].
$$
\medskip

For $(k_2,k_3)=(4,2)$, $d_3=6$
%. Thus $1+1+\dfrac{|p_{33}|^2}{2}+\dfrac{|p_{33}|^2}{4}=6$
implies $|p_{33}|=\dfrac{4}{\sqrt3}>2$, a contradiction.% Hence we rule out this case.

%\smallskip

%\medskip

For $(k_2,k_3)=(4,3)$, $k_4=3$, $d_3=4$ and $d_4=4$
%. Thus $1+1+\dfrac{|p_{33}|^2}{3}+\dfrac{|p_{33}|^2}{3}=4$
implies $|p_{33}|= \sqrt3$ and $|p_{43}|= \sqrt3$. But the integrality of structure constants and orthogonality of characters forces $p_{33}=  \pm\sqrt{3}$  and $p_{43}=  \mp\sqrt{3}$. Therefore, up to simultaneous permutation of row $4$ and row $5$, and column $4$ and column $5$, we have
$$P=\left[
   \begin{array}{ccccc}
     1 & 1 & 4 &3&3\\
     1 & 1 &  4 &  -3&-3\\
      1 & 1 & -2& 0&0\\
    1 &  -1 & 0&   \sqrt{3} &-\sqrt{3}\\
     1 & -1 & 0&  -\sqrt{3} & \sqrt{3}\\
   \end{array}\right].
$$
Although the structure constants are not all integers, for example $\lambda_{342}=3/2$, but it satisfies the Allen integrality condition, see Definition \ref{AllenIntDef}. Hence the $P$ is a character table of a $C$-algebra arising from an Allen matrix.

\medskip

Subcase 4. Let the second row of $P$ be $[1, ~ -1,~     k_2,   ~-k_3,~ -(k_2-k_3)]$.

\smallskip

Then, the first row of the character table is $[1,~ 1, ~k_2, ~k_3, ~ k_2-k_3]$.
%Thus the character table
%$$P=\left[
%   \begin{array}{ccccc}
%     1 & 1 & k_2 &k_3&k_2-k_3\\
%     1 & -1 &    k_2 &  -k_3&-(k_2-k_3)\\
%      1 &  p_{21} &  p_{22}&  p_{23}& -(1+p_{21} +  p_{22}+  p_{23})\\
%     1 &  p_{31} &  p_{32}&  p_{33}& -(1+p_{31} +  p_{32}+  p_{33})\\
%     1 &  p_{41} &  p_{42}&  p_{43}& -(1+p_{41} +  p_{42}+  p_{43})\\
%   \end{array}\right].
%$$
Therefore, by the orthogonality of  the characters and symmetry of the matrix $S$, we have
$$P=\left[
   \begin{array}{ccccc}
     1 & 1 & k_2 &k_3&k_2-k_3\\
     1 & -1 &    k_2 &  -k_3&-(k_2-k_3)\\
     1 &  1 &  0&  p_{23}& -(2+ p_{23})\\
     1 & -1 &  p_{32}&  p_{33}& -(p_{32}+  p_{33})\\
     1 & -1 &  p_{42}&  p_{43}& -(p_{42}+  p_{43})\\
   \end{array}\right].
$$
Since $P\bar P=nI$, from row $1$, $2$ and column $3$, we get $k_2=0$, a contradiction. Hence this case is not possible.

\medskip

Subcase 5. Let the second row of $P$ be $[1, ~ 1,~     -k_2,   ~-k_3,~ k_2+k_3-2]$.

\smallskip

Then the first row of the character table is $[1, ~1, ~k_2, ~k_3, k_2+k_3-2]$.
%Thus the character table
%$$P=\left[
%   \begin{array}{ccccc}
%     1 & 1 & k_2 &k_3&k_2+k_3-2\\
%     1 & 1 &   -k_2 &  -k_3& k_2+k_3-2\\
%      1 &  p_{21} &  p_{22}&  p_{23}& -(1+p_{21} +  p_{22}+  p_{23})\\
%     1 &  p_{31} &  p_{32}&  p_{33}& -(1+p_{31} +  p_{32}+  p_{33})\\
%     1 &  p_{41} &  p_{42}&  p_{43}& -(1+p_{41} +  p_{42}+  p_{43})\\
%   \end{array}\right].
%$$
Therefore, by the symmetry of the matrix $S$ and $P\bar P=nI$, we have
$$P=\left[
   \begin{array}{ccccc}
     1 & 1 & k_2 &k_3&k_2+k_3-2\\
     1 & 1 &   -k_2 &  -k_3& k_2+k_3-2\\
      1 &  -1  &  p_{22}&  p_{23}& -(p_{22}+  p_{23})\\
     1 &  -1 &  p_{32}&  p_{33}& -(p_{32}+  p_{33})\\
     1 &  1 &  p_{42}&  p_{43}& -(2 +  p_{42}+  p_{43})\\
   \end{array}\right].
$$
Now from orthogonality of characters and symmetry of the matrix $S$, we have
$$P=\left[
   \begin{array}{ccccc}
     1 & 1 & k_2 &k_3&k_2+k_3-2\\
     1 & 1 &   -k_2 &  -k_3& k_2+k_3-2\\
      1 & -1 &  p_{22}&  -p_{22}&0\\
     1 & -1 &  p_{32}&  -p_{32}& 0\\
     1 &  1 &  0&  0& -2\\
   \end{array}\right].
$$
Therefore $k_2\big|2k_3$, $k_3\big|2k_2$,  $(k_2+k_3-2)\big|(k_2+k_3+2)$ and $k_2+k_3-2>0$.
Without loss of generality, let $k_2\leq k_3$. Hence $(k_2,k_3)\in \{(1, 2),
(2, 2)\}$.
Since $|s_{ij}|\leq s_{0j}$, $(k_2,k_3)\neq (1,2)$.

\medskip

For $(k_2,k_3)=(2,2)$, $d_2=d_3=4$. Thus $p_{22}\bar p_{22}=2$, $p_{32}\bar p_{32}=2$. But the integrality of structure constants and orthogonality of characters forces $p_{22}=  \pm\sqrt{2}$  and $p_{32}=  \mp\sqrt{2}$.
Therefore, up to simultaneous permutation of rows and columns, % $3$, $4$ and columns $3$, $4$, %$3$ and row $4$, and column $3$ and column $4$,
we have
$$P=\left[
   \begin{array}{ccccc}
     1 & 1 & 2 &2&2\\
     1 & 1 &  -2 &  -2&2\\
       1 & -1 &  \sqrt{2} &-\sqrt{2}&0\\
     1 & -1 & -\sqrt{2} & \sqrt{2}& 0\\
     1 &  1 &  0&  0& -2\\
   \end{array}\right].
$$

\medskip

Subcase 6. Let the second row of $P$ be $[1, ~ -1,~     -k_2,   ~-k_3,~ k_2+k_3]$.

\smallskip

Then the first row of the character table is $[1,~ 1,~ k_2,~ k_3, ~k_2+k_3]$.
%Thus the character table
%$$P=\left[
%   \begin{array}{ccccc}
%     1 & 1 & k_2 &k_3&k_2+k_3\\
%     1 & -1 &    -k_2 &  -k_3&k_2+k_3\\
%      1 &  p_{21} &  p_{22}&  p_{23}& -(1+p_{21} +  p_{22}+  p_{23})\\
%     1 &  p_{31} &  p_{32}&  p_{33}& -(1+p_{31} +  p_{32}+  p_{33})\\
%     1 &  p_{41} &  p_{42}&  p_{43}& -(1+p_{41} +  p_{42}+  p_{43})\\
%   \end{array}\right].
%$$
Therefore, by the symmetry of the matrix $S$ and orthogonality of the characters, we have
$$P=\left[
   \begin{array}{ccccc}
     1 & 1 & k_2 &k_3&k_2+k_3\\
     1 & -1 &    -k_2 &  -k_3&k_2+k_3\\
      1 &  -1 &  p_{22}&  p_{23}& -1\\
     1 &  -1 &  p_{32}&  p_{33}& -1\\
     1 &  1 &  p_{42}&  p_{43}& 0\\
   \end{array}\right].
$$
Similar to the Subcase 1, $k_2=k_3=1$ implies $d_2=d_3=6$, and we get a contradiction.
\end{proof}

\begin{remark}\rm
We note that all the above Allen matrices in the above proposition are not integral Fourier matrices as the multiplicities are not square integers. In the above proposition, the first two matrices are the character table for {\tt as08(10), as16(24)}, respectively. The third matrix of order $12$ is not the character table of  an association scheme as it has negative structure constants.
%matrix is the character table of an association scheme that is number fourteenth in the classification of association schemes rank of $5$ and order $36$, see \cite{HM}. The remaining matrix of order $12$ is not the character table of  an association scheme as it has negative structure constants.
%The remaining two matrices of order $36$ do not correspond to the character table of any association scheme as they have negative structure constants.
\end{remark}

\section{$C$-algebras arising from integral Fourier matrices of rank $4$ and $5$}

In the previous section we point out that there is no integral Fourier matrix of rank $3$ and there is just one integral Fourier matrix of rank $2$. In this section we prove that for rank $4$ and $5$ there is no integral Fourier matrix, but unlike the previous section we do not assume any nontrivial degree to be equal to $1$.

\begin{thm}\label{FourDifferentDegreesThm4}Let $(A,\BB, \delta)$ be a non-homogenous $C$-algebra arising from an Allen matrix $s$  of rank $4$ with standard  basis $\BB=\{b_0,b_1,\hdots, b_3\}$. Then the Allen matrix cannot be integral Fourier matrix. In other words, there is no $C$-algebra of rank $4$ arising from an integral Fourier matrix.
\end{thm}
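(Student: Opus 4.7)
The plan is to combine two structural constraints on a $C$-algebra $A$ arising from an integral Fourier matrix of rank $4$: by Lemma~\ref{SqaureDegreesForIntegralFurierCaseLemma} every degree $k_i=\delta(b_i)$ is a perfect square, and by Proposition~\ref{SymmetrizingPropertyProp} every $k_i$ divides the order $d_0=1+k_1+k_2+k_3$. I order the nontrivial degrees $k_1\le k_2\le k_3$ and split into two cases according to whether some nontrivial degree equals $1$.

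If $k_i=1$ for some $i>0$, then Theorem~\ref{SimulataneousPermThm} lets me place this degree at position $1$, and Proposition~\ref{CharacterTableExpreesionWithDifferentNormPropRank4} lists the only possible degree patterns as $[1,1,2,2]$, $[1,1,2,4]$, and $[1,1,4,6]$. Each pattern contains a non-square entry ($2$ or $6$), contradicting Lemma~\ref{SqaureDegreesForIntegralFurierCaseLemma}. So this case yields no integral Fourier matrix.

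Otherwise $k_1\ge 4$. Non-homogeneity forces $k_1<k_3$, so the integer $m:=d_0/k_3$ lies in $\{1,2,3\}$ because $d_0\le 1+3k_3$. The value $m=1$ is impossible since it requires $1+k_1+k_2=0$. The value $m=3$ gives $2k_3=1+k_1+k_2\le 1+2k_2$, so $k_3\le k_2$, forcing $k_2=k_3$ and hence $k_2=1+k_1$; but two perfect squares at least $4$ cannot differ by $1$. For $m=2$ one has $k_3=1+k_1+k_2$. If $k_1=k_2$, then $k_1\mid d_0=2+4k_1$ forces $k_1\mid 2$, contradicting $k_1\ge 4$. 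If $k_1<k_2$, squareness forces $k_2\ge (\sqrt{k_1}+1)^2=k_1+2\sqrt{k_1}+1$, so $2(1+k_1)/k_2<2$; since $k_2\mid 2(1+k_1)$, this forces $k_2=2(1+k_1)$, and then $k_1\mid 2(1+k_2)=6+4k_1$ yields $k_1\mid 6$, again contradicting $k_1\ge 4$.

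The main obstacle is the subcase $m=2$ with $k_1<k_2$, where $k_2=2(1+k_1)$ with both sides perfect squares is a Pell-type constraint $b^2-2a^2=2$ admitting infinitely many integer solutions ($(a,b)=(1,2),(7,10),(41,58),\dots$); the perfect-square condition alone is not enough to get a contradiction. What rescues the argument is the companion divisibility $k_1\mid 6$ coming from $k_1\mid 2(1+k_2)$, which forces $k_1\le 6$ and eliminates every admissible Pell solution in one step.
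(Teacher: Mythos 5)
Your Case 2 (all nontrivial degrees at least $4$) is a correct and self-contained argument: ordering $k_1\le k_2\le k_3$, bounding $m=d_0/k_3\in\{2,3\}$, and playing the divisibilities $k_2\mid 2(1+k_1)$ and $k_1\mid 2(1+k_2)$ against the square gaps is sound, and it is genuinely different from the paper's route. The gap is in Case 1. There you invoke Proposition \ref{CharacterTableExpreesionWithDifferentNormPropRank4}, but that proposition carries the extra hypothesis $|s_{ij}|\le s_{0j}$ for all $i,j$, which is \emph{not} among the hypotheses of Theorem \ref{FourDifferentDegreesThm4}; indeed the whole point of Section 5, stated explicitly in its opening, is to treat rank $4$ and $5$ without the assumptions used in the Section 4 classification. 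The paper nowhere shows that $|s_{ij}|\le s_{0j}$ holds automatically for an Allen matrix (it is listed as a separate hypothesis in Theorem \ref{OneDegreeDifferentThm}, Lemma \ref{EqualityOfKandLLemmaRank5}, and Proposition \ref{CharacterTableExpreesionWithDifferentNormPropRank4}), so your appeal to the classified degree patterns $[1,1,2,2]$, $[1,1,2,4]$, $[1,1,4,6]$ does not cover an integral Fourier matrix with a nontrivial degree $1$ but with some $|s_{ij}|>s_{0j}$. (A further, smaller, worry: the proof of that proposition actually produces two additional tables in its Subcase 3, with patterns $[1,1,4,2]$ and $[1,1,6,4]$, so the enumeration you quote from its statement is itself shaky — though those patterns also contain non-squares, so this alone would not break your argument.)

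The repair is cheap and is essentially what the paper does for the whole theorem in one stroke, with no case split on a degree being $1$: by Lemma \ref{SqaureDegreesForIntegralFurierCaseLemma} each $k_i$ is a perfect square, so $k_i\equiv 0$ or $1\pmod 4$. If some $k_i$ is even then $4\mid k_i$ while $d_0=1+k_1+k_2+k_3\equiv 1,2,3\pmod 4$, contradicting $k_i\mid d_0$ (Proposition \ref{SymmetrizingPropertyProp}); hence all $k_i$ are odd, so $d_0\equiv 0\pmod 4$, and for the largest degree $k$ the quotient $d_0/k$ is divisible by $4$, giving $d_0\ge 4k$ against $d_0\le 1+3k$, which forces $k=1$, i.e.\ the homogeneous case excluded by hypothesis. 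Grafting this (or a direct arithmetic treatment of $k_2,k_3$ square divisors of $2+k_2+k_3$) in place of your appeal to Proposition \ref{CharacterTableExpreesionWithDifferentNormPropRank4} makes your proof complete; as written, the degree-one case rests on an unwarranted hypothesis.
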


\begin{proof} Suppose the Allen matrix  is an integral Fourier matrix. Let $\delta(b_i)= k_i$, for all $i>0$. Since $s$ is an integral Fourier matrix, $k_1, k_2$ and $k_3$ are square integers, see Lemma \ref{SqaureDegreesForIntegralFurierCaseLemma}. As $d_0=1+k_1+k_2+k_3$ and
$k_1, k_2$ and $k_3$ divide $d_0$. Therefore, $k_2+k_3\equiv-1\mod k_1$, $k_1+k_3\equiv-1\mod k_2$ and $k_1+k_2\equiv-1\mod k_3$.

Claim: $k_1=k_2=k_3=1$. Without loss of generality, suppose $k_1$ is an even integer.  Since $k_1, k_2$ and $k_3$ are squares, we have $k_1\equiv 0 \mod4$ and $d_0\not\equiv 0 \mod4$, a contradiction to fact that $k_1$ divides $d_0$. Therefore $k_1, k_2$ and $k_3$ are odd integer. Suppose $k_1\geq k_2, k_3$ and $k_1>1$.
Now, if all $k_1,k_2$ and $k_3$ are odd integers then
$k_1,k_2,k_3\equiv 1 \mod4$. But $d_0\equiv 0\mod4$  implies $d_0=k_1a$, $a\geq 4$. Therefore, $k_1(a-1)=1+k_2+k_3$ implies $3k_1\leq 1+k_2+k_3$. Thus $k_2$ or $k_3>k_1$, again a contradiction.
\end{proof}

\begin{thm}\label{FourDifferentDegreesThm5}Let $(A,\BB, \delta)$ be a non-homogenous $C$-algebra arising from an Allen matrix $s$  of rank $5$ with standard  basis $\BB=\{b_0,b_1,\hdots, b_4\}$. Then the Allen matrix cannot be integral Fourier matrix. In other words, there is no $C$-algebra of rank $5$ arising from an integral Fourier matrix.
\end{thm}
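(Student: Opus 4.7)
The plan is to follow the strategy of Theorem \ref{FourDifferentDegreesThm4} but push through the extra combinatorial cases created by the fifth degree. By Lemma \ref{SqaureDegreesForIntegralFurierCaseLemma} each $k_i := \delta(b_i)$ for $1 \leq i \leq 4$ is a perfect square, and by Proposition \ref{SymmetrizingPropertyProp} each $k_i$ divides $d_0 = 1 + k_1 + k_2 + k_3 + k_4$. Writing $k_1$ for the maximum of the $k_i$, the bound $k_1 < d_0 \leq 4k_1 + 1$ forces $a := d_0 / k_1 \in \{2, 3, 4\}$. Next I run a parity count: odd squares are $\equiv 1 \pmod{8}$ and even squares are $\equiv 0 \pmod{4}$, so if exactly $t$ of the $k_i$ are even then $d_0 \equiv 5 - t \pmod{4}$; combined with $4 \mid d_0$ whenever $t \geq 1$, this yields $t \in \{0, 1\}$. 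The all-odd case $t=0$ is killed by $a \equiv d_0/k_1 \equiv 1 \pmod 4$, contradicting $a \in \{2, 3, 4\}$, so exactly one $k_i$ is even.

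I then split on whether this unique even degree is the maximum $k_1$. If it is not, then $k_1$ odd and $4 \mid d_0$ force $a = 4$, so $k_2 + k_3 + k_4 = 3k_1 - 1$; since each $k_i \leq k_1$, exactly one must equal $k_1 - 1$, and that one must be the even square, making $k_1 - 1$ and $k_1$ consecutive perfect squares, which is impossible. If instead $k_1 = 4 m^2$ is the even maximum, then $a = 3$ is killed by a mod-$8$ count (a sum of three odd squares is $\equiv 3 \pmod 8$, but $2 k_1 - 1 \equiv 7$), $a = 4$ is killed by a size bound (odd square divisors of $d_0 = 16 m^2$ are bounded by $m^2$, so the three-term sum is at most $3 m^2 < 12 m^2 - 1$), and the same size bound for $a = 2$ forces $m = 1$, $k_1 = 4$, and $(k_2, k_3, k_4) = (1, 1, 1)$. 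This leaves only the candidate degree pattern $\{1, 4, 1, 1, 1\}$.

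The main obstacle is to rule out this last candidate, since arithmetic alone does not. Here I would invoke Theorem \ref{SimulataneousPermThm} to place the degree $4$ in the last column so that $k_0 = k_1 = k_2 = k_3 = 1$ and $k_4 = 4$. For each $i \in \{1, 2, 3\}$ Lemma \ref{EqualityOfKandLLemmaRank5} then gives $|p_{ij}| = k_j$, so $p_{ij} = \epsilon_{ij} k_j$ with $\epsilon_{ij} \in \{\pm 1\}$. The vanishing row sum (orthogonality with the trivial character) together with $p_{i0} = 1$ reduces to $1 + \epsilon_{i1} + \epsilon_{i2} + \epsilon_{i3} + 4 \epsilon_{i4} = 0$, whose only solution has $\epsilon_{i4} = -1$ and $\epsilon_{i1} = \epsilon_{i2} = \epsilon_{i3} = 1$. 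Thus rows $1, 2, 3$ of $P$ would all equal $[1, 1, 1, 1, -4]$, contradicting the fact that distinct characters produce distinct rows of the character table.
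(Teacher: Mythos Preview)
Your arithmetic sieve is sound and runs parallel to the paper's: both split on how many of the square degrees $k_i$ are even and close each case by size and congruence bounds. The paper's argument is shorter because it invokes one extra input you omit, namely \cite[Lemma~3.7]{MC1}: for an integral Fourier matrix the principal norm $d_0$ is itself a perfect square. This forces every quotient $d_0/k_i$ to be a square, hence $\geq 4$ (and $\geq 9$ when odd); with that in hand the ``even $k_i$ is maximal'' subcase dies immediately from $d_0 \geq 4k_1 > 4k_1 - 2 \geq d_0$, and the residual pattern $(4,1,1,1)$ never survives, since $d_0 = 8$ is not a square.

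Because you do not use that $d_0$ is a square, you are left with the candidate $(k_0,\dots,k_4)=(1,1,1,1,4)$ and must eliminate it by hand. Here your argument has a genuine gap: Lemma~\ref{EqualityOfKandLLemmaRank5} carries the standing hypothesis $|s_{ij}|\leq s_{0j}$, which is not part of the present theorem and which you never verify. Without that bound, $d_i=d_0$ only gives $\sum_j |s_{ij}|^2=\sum_j s_{0j}^2$, not the termwise equality $|p_{ij}|=k_j$ you need. The gap is easy to close in either of two ways. First, simply note $d_0=8$ is not a square and cite \cite[Lemma~3.7]{MC1}. Second, argue directly in the integral matrix: for $i\in\{1,2,3\}$ one has $\sum_{j\geq 1}s_{ij}^2=7$ with $s_{ij}\in\ZZ$, so $(|s_{i1}|,\dots,|s_{i4}|)$ is a permutation of $(1,1,1,2)$, while the symmetrizability relation $\sqrt{d_4}\,s_{i4}=\sqrt{d_i}\,s_{4i}$ gives $s_{i4}=2s_{4i}\in 2\ZZ$, forcing the entry of absolute value $2$ into column $4$. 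This recovers $|s_{ij}|=s_{0j}$ on those rows, and your row-sum argument then finishes correctly.
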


\begin{proof}Suppose the Allen matrix is an integral Fourier matrix. Let $\delta(b_i)=k_i$, for all $i>0$. Since $s$ is an integral Fourier matrix, $k_1,k_2,k_3$ and $k_4$ are squares and integers, see Lemma \ref{SqaureDegreesForIntegralFurierCaseLemma}. By \cite[Lemma 3.7]{MC1}, $d_0$ is a square. Therefore, $d_0 \equiv 0,1 \mod4$.
Let $d_0=k_4a, d_0=k_3b,d_0=k_2c,d_0=k_1d$. Then each of $a,b,c$ and $d$ is greater than $1$ and a square integer as $A$ is non-homogeneous and $d_0$ is a square.

\smallskip

Case 1. Each of $k_1,k_2,k_3,k_4$ is an odd integer.

Then $d_0$ is odd, $k_1,k_2,k_3,k_4$ are odd implies $a,b,c,d$ are odd and greater than or equal to $9$. Without loss of generality, let $k_4\geq k_1,k_2,k_3$. Therefore, $d_0\geq 9k_4, d_0=1+k_1+k_2+k_3+k_4\leq 1+4k_4$, a contradiction.

\smallskip

Case 2. Three of $k_1,k_2,k_3,k_4$ are  odd and one is even.

Then $d_0$ is even. Without loss of generality, let $k_4$ is even. Thus $d_0=k_4a$ implies $a\geq 4$.

\smallskip

Subcase 1. If $k_4>k_1,k_2,k_3$.

Then $d_0\geq 4k_4$, $d_0\leq 1+(k_4-1)+(k_4-1)+(k_4-1)+k_4=4k_4-2$, a contradiction.

\smallskip

Subcase 2. If $k_4<$ one of $k_1,k_2,k_3$, say $k_3$, so $k_1,k_2\leq k_3$.

Then $d_0\geq 4k_3$ as $b$ is an even square. Thus $d_0\leq 1+k_3+k_3+k_3+(k_3-1)=4k_3$ implies $k_1=k_2=k_3$ and $k_4=k_3-1$, $d_0=4k_3$. Now $d_0=4x^2$ because $d_0$ is a square and an even integer. Hence $k_1=k_2=k_3=x^2$ and $k_4=x^2-1$. As $x^2-1$ divides $4x^2$ and $x$ is an odd integer thus  $x^2-1$ divides $4$, we get a contradiction.

\smallskip

Case 3. Two of $k_1,k_2,k_3,k_4$ are  odd and two are even.

Then $d_0\equiv 3 \mod 4$, a contradiction. %which is not possible.

\smallskip

Case 4. One of $k_1,k_2,k_3,k_4$ is odd and three are even.

Then $d_0\equiv 2 \mod 4$, a contradiction. %which is not possible again.
\end{proof}

\end{document}